\numberwithin{equation}{section}
\newtheorem{thm}{Theorem}[section]
\newtheorem{prop}[thm]{Proposition}
\newtheorem{lemm}[thm]{Lemma}
\newtheorem{cor}[thm]{Corollary}
\newtheorem{defi}[thm]{Definition}
\newtheorem{rmk}[thm]{Remark}
\def\fk#1{\mathfrak{#1}}
\def\cal#1{\mathcal{#1}}
\def\rm#1{\mathrm{#1}}
\def\bb#1{\mathbb{#1}}
\def\wh#1{\widehat{#1}}
\def\wt#1{\widetilde{#1}}
\def\ol#1{\overline{#1}}
\newcommand{\QQ}{\mathbb{Q}}
\newcommand{\Qlb}{\overline{\mathbb{Q}}_\ell}
\newcommand{\GL}{\mathrm{GL}}
\newcommand{\Hom}{\mathrm{Hom}}
\newcommand{\Perv}{\mathrm{Perv}}
\newcommand{\Sat}{\mathrm{Sat}}
\newcommand{\Hck}{\mathcal{H}\mathrm{ck}}
\newcommand{\AHck}{\mathcal{AH}\mathrm{ck}}
\newcommand{\HHom}{\mathscr{H}\mathrm{om}}
\newcommand{\Div}{\mathrm{Div}}
\newcommand{\Divtil}{\widetilde{\mathrm{Div}}_0^{\perp}}
\newcommand{\ULA}{\mathrm{ULA}}
\newcommand{\perf}{\mathrm{perf}}
\newcommand{\Perf}{\mathrm{Perf}}
\newcommand{\et}{\mathrm{\acute{e}t}}
\newcommand{\bd}{\mathrm{bd}}
\newcommand{\dia}{\diamondsuit}
\newcommand{\Sym}{\mathrm{Sym}}
\newcommand{\Witt}{\mathrm{Witt}}
\newcommand{\eq}{\mathrm{eq}}
\newcommand{\Gr}{\mathrm{Gr}}
\newcommand{\Fl}{\mathcal{F}\ell}
\newcommand{\relmiddle}[1]{\mathrel{}\middle#1\mathrel{}}
\DeclareMathOperator{\Spec}{Spec}
\DeclareMathOperator{\Spd}{Spd}
\DeclareMathOperator{\Spa}{Spa}
\DeclareMathOperator{\supp}{supp}
\title{\Large{Derived Satake category and Affine Hecke category in mixed characteristics}}
\author{Katsuyuki Bando\footnote{Katsuyuki Bando, the University of Tokyo, Graduate School of Mathematical Sciences, Japan, \texttt{kbando@ms.u-tokyo.ac.jp}}}
\begin{document}
\maketitle
\abstract{
We construct a new affine Grassmannian which connects an equal characteristic affine Grassmannian and Zhu's Witt vector affine Grassmannian.
As a result, we deduce the mixed characteristic version of the Bezrukavnikov--Finkelberg's derived Satake equivalence.
By the same argument, we also obtain the mixed characteristic version of Bezrukavnikov's equivalence between two categorifications of an affine Hecke algebra.
}
\section{Introduction}
A lot of analogies between equal characteristic local fields and mixed characteristic local fields have been studied for a long time.
Therefore, it is natural to expect analogies between an equal characteristic affine Grassmannian and a mixed characteristic one.
However, we are often faced with technical difficulties in mixed characteristic.

In this paper, we construct a version of an affine Grassmannian which contains both an equal characteristic affine Grassmannian and a mixed characteristic one.
Through this construction, we solve several mixed characteristic problems.

Put $k=\overline{\bb{F}}_p$, and let $G$ be a reductive group over $k[[t]]$.
Write $\wh{G}$ for its dual group over $\Qlb$, and $\wh{\fk{g}}$ its Lie algebra.

According to \cite{BF}, there is a canonical equivalence of monoidal triangulated categories, which is called derived geometric Satake correspondence:
\[
D^b_{L^{+,\eq}G}(\Gr_{G,\Spec k}^{\eq},\Qlb)\overset{\sim}{\longrightarrow} D^{\wh{G}}_{perf}(\Sym^{[]}(\wh{\fk{g}})),
\]
where the left-hand side is the bounded constructible derived category of loop group-equivariant complexes on the affine Grassmannian in equal characteristics, and the right-hand side is the category of $\wh{G}$-equivariant perfect complexes on the differential graded version of $\Sym(\wh{\fk{g}})$, (for details, see \cite{BF}).
We also write the left-hand side as
\[
D^b(\Hck_{G,\Spec k}^{\eq},\Qlb), 
\]
where $\Hck_{G,\Spec k}^{\eq}:=[L^+G^{\eq}\backslash \Gr_{G,\Spec k}^{\eq}]$ is called the Hecke stack in equal characteristics.

We want a mixed characteristic version of this equivalence, using a Witt-vector affine Grassmaniann defined in \cite{Zhumixed}.
However, the proof in \cite{BF} does not directly work in mixed characteristics.
The problem is, for example, that there does not exist a loop rotation action of $\bb{G}_m$ on a loop group $L^{+}G$.

The main result of this paper is the following equivalence connecting mixed characteristics with equal characteristics:
\begin{thm}(Theorem \ref{thm:maineqmixedtorsion})\label{thm:DerivedHckeqmixed}
Let $\Lambda$ be either
\begin{enumerate}
\item a ring killed by some power of $\ell$, 
\item an algebraic extension $L$ of $\bb{Q}_{\ell}$, or
\item its ring of integer $\cal{O}_L$.
\end{enumerate}
Let $E$ be a finite extension of $\QQ_p$, and $G$ a reductive group over $\cal{O}_E[[t]]$.
There is a canonical equivalence of monoidal triangulated categories
\[
D^{\ULA}(\Hck_{G,\Spec k}^{\eq},\Lambda)\simeq D^{\ULA}(\Hck_{G,\Spec k}^{\Witt},\Lambda).
\]
In particular, when $\Lambda=\Qlb$, we have
\[
D^b(\Hck_{G,\Spec k}^{\eq},\Qlb)\simeq D^b(\Hck_{G,\Spec k}^{\Witt},\Qlb).
\]
\end{thm}
Here $\Hck_{G,\Spec k}^{\Witt}$ is the Witt-vector Hecke stack, which is a quotient stack of the Witt-vector affine Grassmaniann, defined in \cite{Zhumixed} by the Witt-vector loop group.
As a result, we get the derived geometric Satake correspondence in mixed characteristics:
\begin{thm}[Derived geometric Satake correspondence in mixed characteristics](Corollary \ref{cor:DerivedSatbody})
Let $E$ be a finite extension of $\QQ_p$, and $G$ a reductive group over $\cal{O}_E$.
There is a canonical equivalence of monoidal triangulated categories
\[
D^b(\Hck_{G,\Spec k}^{\Witt},\Qlb)\overset{\sim}{\longrightarrow} D^{\wh{G}}_{perf}(\Sym^{[]}(\wh{\fk{g}})).
\]
\end{thm}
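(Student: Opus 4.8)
The plan is to obtain this statement by composing the equivalence of Theorem~\ref{thm:DerivedHckeqmixed} with the equal‑characteristic derived geometric Satake correspondence of Bezrukavnikov--Finkelberg recalled in the introduction; since the substantive work is already contained in Theorem~\ref{thm:DerivedHckeqmixed} (and in \cite{BF}), what remains is essentially bookkeeping. Starting from a reductive group $G$ over $\cal{O}_E$, I would first base change it to the constant group scheme $G_{\cal{O}_E[[t]]}:=G\times_{\Spec\cal{O}_E}\Spec\cal{O}_E[[t]]$, which is reductive over $\cal{O}_E[[t]]$ and whose quotient by $t$ recovers $G$ over $\cal{O}_E$; hence its Witt‑vector Hecke stack is exactly $\Hck_{G,\Spec k}^{\Witt}$. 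Theorem~\ref{thm:DerivedHckeqmixed} applied to $G_{\cal{O}_E[[t]]}$ then supplies a canonical monoidal triangulated equivalence
\[
D^b(\Hck_{G,\Spec k}^{\eq},\Qlb)\overset{\sim}{\longrightarrow} D^b(\Hck_{G,\Spec k}^{\Witt},\Qlb).
\]
Because $G_{\cal{O}_E[[t]]}$ is constant in $t$, its reduction over $k[[t]]$ is the constant extension of $G_k:=G\times_{\Spec\cal{O}_E}\Spec k$, so $\Hck_{G,\Spec k}^{\eq}$ is the usual equal‑characteristic Hecke stack of the reductive group $G_k$ over $k$, and $D^b(\Hck_{G,\Spec k}^{\eq},\Qlb)$ is its classical derived Satake category. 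Moreover $\wh{G}$ over $\Qlb$ and its Lie algebra $\wh{\fk{g}}$ are determined by the (geometric) root datum with its relevant Galois action, which is unchanged under all of these base changes, so the spectral side $D^{\wh{G}}_{perf}(\Sym^{[]}(\wh{\fk{g}}))$ is literally the same category whether formed from $G$, from $G_{\cal{O}_E[[t]]}$ or from $G_k$.

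Next I would invoke the theorem of \cite{BF} quoted in the introduction for the reductive group $G_k\times_{\Spec k}\Spec k[[t]]$ over $k[[t]]$, obtaining a canonical equivalence of monoidal triangulated categories
\[
D^b(\Hck_{G,\Spec k}^{\eq},\Qlb)\overset{\sim}{\longrightarrow} D^{\wh{G}}_{perf}(\Sym^{[]}(\wh{\fk{g}})).
\]
Composing a monoidal quasi‑inverse of the equivalence of Theorem~\ref{thm:DerivedHckeqmixed} with this equivalence produces a functor
\[
D^b(\Hck_{G,\Spec k}^{\Witt},\Qlb)\longrightarrow D^{\wh{G}}_{perf}(\Sym^{[]}(\wh{\fk{g}})),
\]
which is an equivalence because it is a composite of equivalences, is monoidal because each constituent is, and is canonical because each constituent is; this is the asserted equivalence.

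The only point that needs care --- and the step I expect to be the main (if modest) obstacle --- is verifying that the monoidal structures are matched correctly: one must know that the convolution product on $D^b(\Hck_{G,\Spec k}^{\Witt},\Qlb)$ is carried by Theorem~\ref{thm:DerivedHckeqmixed} to the convolution product on $D^b(\Hck_{G,\Spec k}^{\eq},\Qlb)$, hence, via \cite{BF}, to the derived tensor product on $D^{\wh{G}}_{perf}(\Sym^{[]}(\wh{\fk{g}}))$, and that the associativity and unit constraints correspond as well. But this compatibility is precisely the monoidality assertion already built into Theorem~\ref{thm:DerivedHckeqmixed}: it suffices to unwind the definition of the Witt‑vector convolution diagram and compare it, through the interpolating affine Grassmannian constructed earlier in the paper, with the equal‑characteristic convolution diagram, and no further argument is required. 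One should also record that all three categories are triangulated and the functors exact, but this too is part of the statements being composed.
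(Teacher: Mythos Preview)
Your proposal is correct and follows essentially the same approach as the paper: the paper's proof of this corollary is the single line ``Use the Derived Satake correspondence in equal characteristic (\cite{BF}) and Theorem~\ref{thm:maineqmixedQlb},'' which is precisely the composition you describe. Your additional remarks about base-changing $G$ to $\cal{O}_E[[t]]$ and checking monoidality only make explicit what the paper leaves implicit.
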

To show Theorem \ref{thm:DerivedHckeqmixed}, we connect an equal characteristic problem with a mixed characteristic one.
Our method is different from tilting:
\[
\xymatrix{
\Div^{1,\eq}&&\Div^{1,\mathrm{mixed}}\\
\Spd k\ar@{->}[r]_-{(\pi,t)}\ar[u]&(\Divtil)^{\dia}&\Spd k.\ar@{->}[l]^-{(t,\pi)}\ar[u]
}
\]
The moduli space $\Div^{1,\mathrm{mixed}}$ of untilts and its equal characteristic correspondent $\Div^{1,\eq}$ are in a different direction from the space $\Divtil$ we are going to use.

Instead, we construct a version of an affine Grassmannian $\Gr_{G,\Divtil}$ over a certain scheme $\Divtil$ connecting equal characteristics and mixed characteristics.
More precisely, the fiber of $\Gr_{G,\Divtil}$ over a certain point of $\Divtil$ is an equal characteristic affine Grassmannian, and the fiber over another point is a Witt vector affine Grassmannian.

Let $E$ be as above and $k_E$ a residue field of $\cal{O}_E$.
The scheme $\Divtil$ is defined as a presheaf on the category of perfect $k_E$-schemes
\[
R\mapsto \left\{(\xi_1,\xi_2)\in W_{\cal{O}_E}(R)[[t]]^2\relmiddle| \begin{array}{l}\xi_1=[a]p+[b]t\\
\xi_2=[c]p+[d]t\end{array},\begin{pmatrix}a&b\\c&d\end{pmatrix}\in \GL_2(R)\right\},
\]
where $[-]$ denotes the Teichm\"{u}ller lift.
This scheme $\Divtil$ is related to a moduli space of divisors on ``Fargues--Fontaine surfaces'', which connects an equal characteristic Fargues--Fontaine curve and a mixed characteristic one.
That is, for a perfectoid space $S=\Spa(R,R^+)$ of characteristic $p$, put
\[
\underline{\cal{Y}}_S:=(\Spa W_{\cal{O}_E}(R^+)[[t]])\setminus V([\varpi]), 
\]
where $\varpi$ is a pseudo uniformizer of $R$.
Consider a moduli space $\Div^{1,1}$ (which is a v-sheaf on the category of perfectoid spaces over $k_E$) defined by 
\begin{align*}
&(R,R^+)\\
&\mapsto \left\{(D_1,D_2)\relmiddle|\begin{array}{l}
\text{$D_1$ is a closed Cartier divisor of $\underline{\cal{Y}}_S$.}\\
\text{$D_2$ is a closed Cartier divisor of (a space defined by) $D_1$.}\\
\text{$D_2$ is isomorphic to some $\bb{Z}_p[[t]]$-untilt $S^{\sharp}$ of $S$,}\\
\text{ and the induced map $S \cong D_2^{\dia} \hookrightarrow \underline{\cal{Y}}_S^{\dia}$ is a map over $S\times \Spd \bb{Z}_p[[r]]$.}
\end{array} \right\}
\end{align*}
Then a certain quotient of $\Divtil$ can be seen as a subspace of $\Div^{1,1}$.

Using this moduli space reduces the mixed characteristic statement to the equal characteristic one.
Here we use certain equivalence of the category of ULA sheaves, similarly to \cite[Corollary VI.6.7]{FS}.

Furthermore, according to \cite{Bez}, there is a canonical equivalence of monoidal categories:
\begin{align}\label{eqn:BezQlb}
D^b_{L^{+,\eq}\cal{I}}(\Fl_{G,\Spec k}^{\eq},\Qlb)\overset{\sim}{\longrightarrow} \mathrm{DGCoh}^{\wh{G}}(\wt{\cal{N}}\times_{\wh{\fk{g}}}\wt{\cal{N}}),
\end{align}
where the left-hand side is the bounded constructible derived category of $L^+\cal{I}^{\eq}$-equivariant complexed on the affine flag variety in equal characteristics, and the right-hand side is the category of $\wh{G}$-equivariant differential graded coherent complex on the differential graded scheme $\wt{N}\times_{\wh{\fk{g}}}\wt{N}$ over $\Qlb$ (for details, see \cite{Bez}).
We also write the left-hand side as
\[
D^b(\AHck_{G,\Spec k}^{\eq},\Qlb), 
\]
where $\AHck_{G,\Spec k}^{\eq}:=[L^+\cal{I}^{\eq}\backslash \Fl_{G,\Spec k}^{\eq}]$ is here called the affine Hecke stack in equal characteristics.

The proof of (\ref{eqn:BezQlb}) in \cite{Bez} also does not work in mixed characteristics.
The problem is, for example, that we do not know Gaitsgory's central functor \cite{Gai} can be defined between the category of perverse sheaves, even if we use the mixed characteristic Beilinson--Drinfeld affine Grassmannian in \cite{FS}.
We do not know the pushforward functor by an open immersion of a generic point of a spectrum of a valuation ring preserves the perversity in the context of diamonds.

However, we also show an analogous result to Theorem \ref{thm:DerivedHckeqmixed}:
\begin{thm}(Theorem \ref{thm:maineqmixedtorsion})\label{thm:DerivedAHckeqmixed}
Let $\Lambda, E, G$ be as in Theorem \ref{thm:DerivedHckeqmixed}.
There is a canonical equivalence of monoidal triangulated categories
\[
D^{\ULA}(\AHck_{G,\Spec k}^{\eq},\Lambda)\simeq D^{\ULA}(\AHck_{G,\Spec k}^{\Witt},\Lambda).
\]
In particular, when $\Lambda=\Qlb$, we have
\[
D^b(\AHck_{G,\Spec k}^{\eq},\Qlb)\simeq D^b(\AHck_{G,\Spec k}^{\Witt},\Qlb).
\]
\end{thm}
Here $\AHck_{G,\Spec k}^{\Witt}$ is the Witt-vector affine Hecke stack, which is a quotient stack of the Witt-vector affine flag variety, defined in \cite[\S 1.4]{Zhumixed}.
As a result, we obtain a mixed characteristic version of the equivalence of \cite{Bez}.
\begin{thm}(Corollary \ref{cor:AffHckbody})
Let $E$ be a finite extension of $\QQ_p$, and $G$ a reductive group over $\cal{O}_E$.
There is a canonical equivalence of monoidal triangulated categories
\[
D^b(\AHck_{G,\Spec k}^{\Witt},\Qlb)\overset{\sim}{\longrightarrow} \mathrm{DGCoh}^{\wh{G}}(\wt{\cal{N}}\times_{\wh{\fk{g}}}\wt{\cal{N}}).
\]
\end{thm}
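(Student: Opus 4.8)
The plan is to obtain this equivalence formally, by concatenating the comparison established above with Bezrukavnikov's equivalence in equal characteristic. Given the reductive group $G$ over $\cal{O}_E$, I would first base change it along $\cal{O}_E \to W(k)[[t]]\otimes_{\ZZ_p}\cal{O}_E$ so that the previous theorem applies, giving a monoidal equivalence $D^b(\AHck_{G,\Spec k}^{\eq},\Qlb)\overset{\sim}{\to}D^b_c(\AHck_{G,\Spec k}^{\Witt},\Qlb)$; here the equal-characteristic affine Hecke stack is the one attached to the reduction of $G$ to $k[[t]]$, and the Witt-vector one is attached to $G$ over $\cal{O}_E$ itself, the two being glued through $\Divtil$ in the proof of that theorem. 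Inverting this equivalence and postcomposing with Bezrukavnikov's equivalence $D^b(\AHck_{G,\Spec k}^{\eq},\Qlb)\overset{\sim}{\to}\mathrm{DGCoh}^{\wh{G}}(\wt{\cal{N}}\times_{\wh{\fk{g}}}\wt{\cal{N}})$ for the reduction of $G$ produces the desired functor.

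Two checks are then needed. First, the target category is literally the same as in the statement: the \emph{spectral side} $\mathrm{DGCoh}^{\wh{G}}(\wt{\cal{N}}\times_{\wh{\fk{g}}}\wt{\cal{N}})$ depends only on the root datum of $G$, which is preserved both by the base change $\cal{O}_E \to W(k)[[t]]\otimes_{\ZZ_p}\cal{O}_E$ and by the reduction to $k[[t]]$, so that $\wh{G}$, $\wh{\fk{g}}$, the nilpotent cone and its Springer resolution $\wt{\cal{N}}$ are unchanged. Second, the composite is monoidal: Bezrukavnikov's equivalence is monoidal by construction, the comparison functor of the preceding theorem is monoidal by its statement, and on each of the three sides the monoidal structure is convolution (on $\Fl^{\eq}$, on $\Fl^{\Witt}$, and over $\wt{\cal{N}}$ respectively), so these structures are intertwined step by step. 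Canonicity of the composite follows from that of the two inputs.

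Regarding notation, I write $D^b(\AHck_{G,\Spec k}^{\Witt},\Qlb)$ for the constructible bounded derived category $D^b_c(\AHck_{G,\Spec k}^{\Witt},\Qlb)$ appearing in the preceding theorem, in keeping with the convention used for the equal-characteristic stacks in the introduction. The only point that I expect to require genuine care --- and hence the main (if modest) obstacle --- is the bookkeeping that the equal-characteristic affine Hecke stack produced by the $\eq/\Witt$ comparison over $\Divtil$ agrees, together with its $L^+\cal{I}^{\eq}$-equivariant structure and its convolution, with the one to which Bezrukavnikov's theorem is applied after reduction of $G$ to $k[[t]]$. All the substantive work has already been carried out in the proof of the preceding theorem, so this final statement is essentially a matter of assembling the pieces.
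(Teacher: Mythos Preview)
Your proposal is correct and matches the paper's own proof, which consists of a single sentence: combine Bezrukavnikov's equal-characteristic equivalence \cite{Bez} with Theorem \ref{thm:maineqmixedQlb} for $\AHck$. The extra bookkeeping you outline (base change of $G$, invariance of the spectral side under change of root datum, monoidality) is precisely the content implicit in that sentence, so there is no substantive difference in approach.
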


After we uploaded the first version of this article to arXiv, a related article \cite{ALWY} appeared.
They prove the equivalence for $G$ of type $A$ by showing some results on Gaitsgory's central functor (\cite[Theorem 1.5]{ALWY}).
Our proof is different in that we use the moduli space of ``Fargues--Fontaine surface'', connecting mixed characteristics with equal characteristics, which can be applied for general $G$.

Furthermore, by proving the exactness of the equivalence of Theorem \ref{thm:DerivedHckeqmixed} and Theorem \ref{thm:DerivedAHckeqmixed}, we show the following theorem:
\begin{thm}(Theorem \ref{thm:equivPerv})\label{thm:SatHckeqmixed}
Let $\Lambda, E, G$ be as in Theorem \ref{thm:DerivedHckeqmixed}.
There is a canonical equivalence of monoidal categories
\begin{align}\label{eqn:SatHckeqmixed}
\Sat(\Hck_{G,\Spec k}^{\eq},\Lambda)\simeq \Sat(\Hck_{G,\Spec k}^{\Witt},\Lambda).
\end{align}
In particular, when $\Lambda=\Qlb$, we have
\[
\Perv(\Hck_{G,\Spec k}^{\eq},\Qlb)\simeq \Perv(\Hck_{G,\Spec k}^{\Witt},\Qlb).
\]
\end{thm}
Here $\Sat$ means the full subcategory of the derived category consisting of flat perverse ULA sheaves.
We also prove the following:
\begin{thm}(Theorem \ref{thm:symmonoidal})
The equivalence (\ref{eqn:SatHckeqmixed}) is symmetric monoidal.
\end{thm}
To prove this, we need the construction of a symmetric monoidal structure via fusion product, thus we use some diamonds and v-stacks, including the moduli space $\Div^{1,1}$ of divisors on ``Fargues--Fontaine surface''.
\subsection*{Acknowledgement}
The author is really grateful to his advisor Naoki Imai for useful discussions and comments.
He also would like to thank Peter Scholze and Jo\~{a}o Louren\c{c}o for their helpful comments.
\subsection*{Notations}
Let $p$ be a prime number, and $E$ a finite extension of $\QQ_p$.
We write $\cal{O}_E$ for its ring of integers, and $k_E$ for its residue field.
Also, let $\varpi_E$ be a uniformizer of $E$.
Let $\ell$ be a prime number not equal to $p$.
Let $\Lambda$ be either
\begin{enumerate}
\item a ring killed by some power of $\ell$, 
\item an algebraic extension $L$ of $\bb{Q}_{\ell}$, or
\item its ring of integer $\cal{O}_L$.
\end{enumerate}

Let $G$ be a connected reductive group over $\cal{O}_E[[t]]$.
Write $\bar{G}$ for $G\times_{\Spec \cal{O}_E[[t]]} \Spec k_E$
When $G$ is split, $T$ denotes a maximal split torus in $G$, and $B$ denotes a Borel subgroup containing $T$.
Write $\bb{X}^*(T)=\Hom(T,\bb{G}_m)$ for the character lattice, $\bb{X}_*(T)=\Hom(\bb{G}_m,T)$ the cocharacter lattice.
Let $\bb{X}_*^+(T)$ denote the subset of dominant cocharacters with respect to $B$.

For a field $k$, let $\rm{Alg}_{k}$ be the category of $k$-algebras, and when $k$ is a perfect field of characteristic $p$, let $\rm{PerfAlg}_{k}$ be the category of perfect $k$-algebras.

For a ring $R$ over $\bb{F}_p$, its perfection is denoted by $R^{\perf}$, that is, 
\[
R^{\perf}=\lim_{\substack{\longrightarrow \\ \sigma}} R
\]
where $\sigma\colon R\to R$ is a homomorphism $a\mapsto a^p$.
Similarly, for a scheme $X$, its perfection is denoted by $X^{\perf}$.
If $k$ is a perfect field of characteristic $p$ and $X$ has a $k$-structure, $k$-structure on $X^{\perf}$ is defined by the composition
\[
X^{\perf}\to X\to \Spec k.
\]
In other words, $k$-scheme $X^{\perf}$ can be written as an inverse limit
\[
X=\lim_{\longleftarrow}(X\overset{\sigma}{\leftarrow} X^{(1)}\overset{\sigma}{\leftarrow}X^{(2)}\overset{\sigma}{\leftarrow}\cdots)
\]
where $X^{(m)}=X$ with a $k$-structure given by 
\[
X\to \Spec k \overset{\sigma^m}{\to} \Spec k.
\]

\section{Affine Grassmannians and Affine flag varieties}
\subsection{Review on equal characteristic and Witt vector affine Grassmannians}\label{ssc:reviewGrass}
Put 
\begin{align*}
L^{+,\eq}G&\colon \rm{Alg}_{k_E}\to \rm{Sets},\ R\mapsto G(R[[t]]),\\
L^{\eq}G&\colon \rm{Alg}_{k_E}\to \rm{Sets},\ R\mapsto G(R((t))).
\end{align*}
The equal characteristic affine Grassmannian is a quotient stack in \'{e}tale topology
\[
\Gr_{G,\Spec k_E}^{\eq}:=[L^{\eq}G/L^{+,\eq}G].
\]
It is known that $\Gr_{G,\Spec k_E}^{\eq}$ is representable by an inductive limit of projective $k$-schemes along closed immersions, see \cite{MV}.
Moreover, put
\begin{align*}
L^{+,\Witt}G&\colon \rm{PerfAlg}_{k_E}\to \rm{Sets},\ R\mapsto G(W_{\cal{O}_E}(R)),\\
L^{\Witt}G&\colon \rm{PerfAlg}_{k_E}\to \rm{Sets},\ R\mapsto G(W_{\cal{O}_E}(R)[\varpi_E^{-1}]),
\end{align*}
where 
\[
W_{\cal{O}_E}(R)=W(R)\otimes_{W(k_E)}\cal{O}_E
\]
is a ring of Witt vector tensored with $\cal{O}_E$.
The Witt vector affine Grassmannian is a quotient stack in \'{e}tale topology
\[
\Gr_{G,\Spec k_E}^{\Witt}:=[L^{\Witt}G/L^{+,\Witt}G].
\]
$\Gr_{G,\Spec k}$ is representable by an inductive limit of the perfections of projective $k$-schemes along closed immersions, see \cite{BS}.
\subsection{Affine Grassmannian over $\Divtil$}
\subsubsection{Definiton of affine Grassmannian}
In this subsection, we define a version of the affine Grassmannian connecting the equal characteristic affine Grassmannian and the Witt vector affine Grassmannian.
Fix a uniformizer $\pi$ in $E$ and define the presheaf
\[
\Divtil\colon \rm{PerfAlg}_{k_E}\to \rm{Sets}
\]
by 
\[
\Divtil(R)=\left\{(\xi_1,\xi_2)\in W_{\cal{O}_E}(R)[[t]]^2\relmiddle| \begin{array}{l}\xi_1=[a]\pi+[b]t\\
\xi_2=[c]\pi+[d]t\end{array},\begin{pmatrix}a&b\\c&d\end{pmatrix}\in \GL_2(R)\right\},
\]
where $[-]$ denotes the Teichm\"{u}ller lift.
Obviously, there is a canonical isomorphism
\[
\GL_{2,k_E}^{\perf}\overset{\sim}{\longrightarrow}\Divtil, \begin{pmatrix}a&b\\c&d\end{pmatrix}\mapsto ([a]\pi+[b]t,[c]\pi+[d]t).
\]

For a ring $R\in \rm{PerfAlg}_{k_E}$ and $(\xi_1,\xi_2)\in \Divtil(R)$, put
\begin{align*}
B^+_{\xi_1,\xi_2}(R)&:=(W_{\cal{O}_E}(R)[[t]]/(\xi_1))^{\wedge \xi_2}\\
B_{\xi_1,\xi_2}(R)&:=(W_{\cal{O}_E}(R)[[t]]/(\xi_1))^{\wedge \xi_2}[\xi_2^{-1}], 
\end{align*}
where $(-)^{\wedge \xi_2}$ is the $\xi_2$-adic completion.
We still write $\xi_2$ for the image of $\xi_2$ in $W_{\cal{O}_E}(R)[[t]]/(\xi_1)$ or $B^+_{\xi_1,\xi_2}(R)$.
We define
\begin{align*}
L^{+}_{\Divtil}G&\colon \rm{PerfAffSch}^{\rm{op}}_{/\Divtil}\to \rm{Sets},\ \Spec R\mapsto G(B^+_{\xi_1,\xi_2}(R)),\\
L_{\Divtil}G&\colon \rm{PerfAffSch}^{\rm{op}}_{/\Divtil}\to \rm{Sets},\ \Spec R\mapsto G(B_{\xi_1,\xi_2}(R)).
\end{align*}
We sometimes write simply $L^+G$ and $LG$ for $L^{+}_{\Divtil}G$ and $L_{\Divtil}G$, respectively.
\begin{defi}
{The affine Grassmannian over $\Divtil$} is defined as the following quotient stack in \'{e}tale topology:
\[
\Gr_{G,\Divtil}:=[L_{\Divtil}G/L^+_{\Divtil}G].
\]
\end{defi}
For $\Gr_{G,\Divtil}$, we sometimes write $\Gr_G$ or $\Gr$ if $G$ is clear from the context.
\begin{rmk}
The space $\Gr_{G,\Divtil}$ depends on the choice of $\pi$.
While we use this space to show the main theorem \ref{thm:DerivedHckeqmixed}, we can show that the equivalence of Theorem \ref{thm:DerivedHckeqmixed} does not depend on the choice of $\pi$.
In fact, $\Gr_{G,\Divtil}$ can be seen as the pullback of a closed subscheme of a certain affine Grassmanninan which does not depend on the choice of $\pi$ (and also in which Teichm\"{u}ller lifts are not involved), see \S \ref{scn:symm}.
\end{rmk}
\begin{rmk}
For $(\xi_1,\xi_2)\in \Divtil(R)$, the ring $W_{\cal{O}_E}(R)[[t]]/(\xi_1)$ is already $\xi_2$-complete, thus $L^{+}_{\Divtil}G$, $L_{\Divtil}G$ and $\Gr_{G,\Divtil}$ can be already defined over $(\bb{A}^{2}\setminus \{0\})^{\perf}$ via 
\[
\Divtil\to (\bb{A}^{2}\setminus \{0\})^{\perf},\ (\xi_1,\xi_2)\mapsto \xi_2.
\]
(Furthermore, they are defined over $\bb{P}^{1,\perf}$.)
However, we define as above to clarify the relation to the moduli space of divisors on Fargues--Fontaine surface, see \S \ref{scn:symm}.
\end{rmk}

In terms of $G$-torsors, 
\[
\Gr_{G,\Divtil}(R)=\left\{(\xi_1,\xi_2,\cal{E},\beta)\relmiddle|\begin{array}{l}
(\xi_1,\xi_2)\in \Divtil(R)\\
\text{$\cal{E}$ is a $G$-torsor on $\Spec B^+_{\xi_1,\xi_2}(R)$}\\
\text{$\beta$ is a trivialization of $\cal{E}|_{\Spec B_{\xi_1,\xi_2}(R)}$}
\end{array}\right\}.
\]
Let us state some properties of $B^+_{\xi_1,\xi_2}(R)$. 
\begin{lemm}\label{lemm:BquotR}
Let $R$ be a perfect ring over $k_E$ and fix $(\xi_1,\xi_2)\in \Divtil(R)$.
Then
\[
B^+_{\xi_1,\xi_2}(R)/\xi_2\cong R.
\]
\end{lemm}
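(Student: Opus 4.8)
The plan is to remove the $\xi_2$-adic completion and then recognise $(\xi_1,\xi_2)$ as a pair of coordinates. First, for any commutative ring $A$ and any $f\in A$ the $f$-adic completion $A^{\wedge f}$ satisfies $A^{\wedge f}/fA^{\wedge f}\cong A/fA$; applying this with $A=W_{\cal{O}_E}(R)[[t]]/(\xi_1)$ and $f$ the image of $\xi_2$ gives
\[
B^+_{\xi_1,\xi_2}(R)/\xi_2\;\cong\;W_{\cal{O}_E}(R)[[t]]/(\xi_1,\xi_2),
\]
so the problem is reduced to identifying the ideal $(\xi_1,\xi_2)$ of $W_{\cal{O}_E}(R)[[t]]$.

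The main step is to show that the matrix of Teichm\"uller lifts $\begin{pmatrix}[a]&[b]\\ [c]&[d]\end{pmatrix}$ is invertible over $W_{\cal{O}_E}(R)$. By multiplicativity of $[-]$ its determinant is $[ad]-[bc]$, which maps to $ad-bc\in R^{\times}$ under the canonical ring surjection $W_{\cal{O}_E}(R)\twoheadrightarrow R$ sending $[x]\mapsto x$; since $W_{\cal{O}_E}(R)$ is complete and separated for the kernel of that surjection, any lift of a unit is a unit, so the determinant is invertible. As
\[
\begin{pmatrix}\xi_1\\ \xi_2\end{pmatrix}=\begin{pmatrix}[a]&[b]\\ [c]&[d]\end{pmatrix}\begin{pmatrix}p\\ t\end{pmatrix},
\]
invertibility of this matrix over $W_{\cal{O}_E}(R)[[t]]$ forces the equality of ideals $(\xi_1,\xi_2)=(p,t)$. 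Hence
\[
B^+_{\xi_1,\xi_2}(R)/\xi_2\;\cong\;W_{\cal{O}_E}(R)[[t]]/(p,t)\;\cong\;R,
\]
the last isomorphism being the reduction of $W_{\cal{O}_E}(R)[[t]]$ modulo $t$ and modulo the uniformizer, which returns $R$ because $R$ is perfect.

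I do not expect a genuine obstacle: morally the lemma says that, after the invertible base change recorded by the matrix in $\GL_2(R)$, the pair $(\xi_1,\xi_2)$ becomes the coordinate pair $(p,t)$ of $W_{\cal{O}_E}(R)[[t]]$, and then $B^+_{\xi_1,\xi_2}(R)/\xi_2$ is simply the reduction modulo those two coordinates. The two points that deserve a sentence of care are the general identity $A^{\wedge f}/fA^{\wedge f}\cong A/fA$ used to discard $\xi_2$, and the ``lifts of units are units'' argument along the complete reduction $W_{\cal{O}_E}(R)\to R$; the latter is exactly where perfectness of $R$ is used, guaranteeing both that $W_{\cal{O}_E}(R)$ is complete and separated for that ideal and that $W_{\cal{O}_E}(R)$ modulo the uniformizer is $R$.
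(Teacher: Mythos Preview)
Your argument is essentially identical to the paper's: both reduce to $W_{\cal{O}_E}(R)[[t]]/(\xi_1,\xi_2)$, observe that the Teichm\"uller matrix $\begin{pmatrix}[a]&[b]\\ [c]&[d]\end{pmatrix}$ is invertible (you justify this via completeness, the paper simply asserts it), conclude $(\xi_1,\xi_2)=(p,t)$, and finish. The only additions are your explicit mention of the identity $A^{\wedge f}/f\cong A/f$ and the unit-lifting argument, both of which the paper leaves implicit.
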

\begin{proof}
Since $\begin{pmatrix}a&b\\c&d\end{pmatrix}\in \GL_2(R)$, we also have $\begin{pmatrix}[a]&[b]\\\text{$[c]$}&[d]\end{pmatrix}\in \GL_2(W_{\cal{O}_E}(R)[[t]])$.
Thus
\begin{align*}
B^+_{\xi_1,\xi_2}(R)/\xi_2&\cong W_{\cal{O}_E}(R)[[t]]/(\xi_1,\xi_2)\\
&= W_{\cal{O}_E}(R)[[t]]/([a]\pi+[b]t,[c]\pi+[d]t)\\
&=W_{\cal{O}_E}(R)[[t]]/(\pi,t)\\
&\cong R.
\end{align*}
\end{proof}
\begin{lemm}\label{lemm:xiadicexp}
$\xi_2\in W_{\cal{O}_E}(R)[[t]]/(\xi_1)$ is non zero divisor.

As result, together with Lemma \ref{lemm:BquotR}, any element $a\in B^+_{\xi_1,\xi_2}(R)$ is uniquely expressed as
\[
a=\sum_{i=1}^{\infty} \ol{[a_i]}\xi_2^i
\]
with $a_i\in R$.
Here $\ol{[a_i]}$ is the image in $B^+_{\xi_1,\xi_2}(R)$ of the Teichm\"{u}ler lift $[a_i]\in W_{\cal{O}_E}(R)$.
\end{lemm}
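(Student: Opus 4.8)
The plan is to reduce everything to the single assertion that $\xi_2$ is a non-zero-divisor modulo $\xi_1$ --- this is where the $\GL_2$-hypothesis on $\left(\begin{smallmatrix}a&b\\c&d\end{smallmatrix}\right)$ is genuinely used --- after which the Teichm\"uller expansion follows formally from $\xi_2$-adic completeness.

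Write $B=W_{\cal{O}_E}(R)[[t]]$ and $N=\left(\begin{smallmatrix}[a]&[b]\\ [c]&[d]\end{smallmatrix}\right)$, so that $\binom{\xi_1}{\xi_2}=N\binom{p}{t}$. Since $B$ is both $p$-adically and $t$-adically complete, the ideal $(p,t)$ lies in $\mathrm{rad}(B)$; as $\det N=[ad]-[bc]$ reduces modulo $(p,t)$ to $ad-bc\in R^{\times}$ (using $B/(p,t)\cong R$ from the proof of Lemma~\ref{lemm:BquotR}), it follows that $N\in\GL_2(B)$. Next I would record that $(p,t)$ is a regular sequence in $B$: the element $p$ is a non-zero-divisor because $W_{\cal{O}_E}(R)$ is $p$-torsion-free (here $R$ perfect is used, e.g. via $W_{\cal{O}_E}(R)$ being finite free over $W(R)$), and $t$ is a non-zero-divisor in $B/p$. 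Hence the Koszul complex $K_{\bullet}(p,t)$ is acyclic in positive degrees, and since $N\in\GL_2(B)$ its exterior powers give an isomorphism $K_{\bullet}(\xi_1,\xi_2)\cong K_{\bullet}(p,t)$, so $K_{\bullet}(\xi_1,\xi_2)$ is acyclic in positive degrees as well. In particular $H_1(K_{\bullet}(\xi_1,\xi_2))=0$, i.e. any relation $\xi_1u+\xi_2v=0$ in $B$ has the Koszul form $u=\xi_2w$, $v=-\xi_1w$ for some $w\in B$. Now if $\xi_2\bar{x}=0$ in $A:=B/(\xi_1)$, lifting to $\xi_2x=\xi_1y$ in $B$ and rewriting as $\xi_1y+\xi_2(-x)=0$ produces $w\in B$ with $x=\xi_1w$, so $\bar{x}=0$; this is the first assertion.

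For the expansion, the standard behaviour of completion along a non-zero-divisor shows that $\xi_2$ is a non-zero-divisor in $B^+:=B^+_{\xi_1,\xi_2}(R)=A^{\wedge\xi_2}$, that $B^+$ is $\xi_2$-adically complete and separated, and that $B^+/\xi_2B^+\cong R$ by Lemma~\ref{lemm:BquotR}, the isomorphism carrying the class of a Teichm\"uller lift $\ol{[r]}$ to $r$. Given $a\in B^+$, let $a_0\in R$ be the image of $a$ in $B^+/\xi_2B^+\cong R$; then $a-\ol{[a_0]}\in\xi_2B^+$, and since $\xi_2$ is a non-zero-divisor there is a unique $a^{(1)}\in B^+$ with $a-\ol{[a_0]}=\xi_2a^{(1)}$. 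Iterating yields uniquely determined $a_i\in R$ with $a=\sum_{i=0}^{N}\ol{[a_i]}\xi_2^i+\xi_2^{N+1}a^{(N+1)}$ for every $N$, and $\xi_2$-adic completeness of $B^+$ then gives $a=\sum_{i\ge 0}\ol{[a_i]}\xi_2^i$. Uniqueness follows by reducing such a vanishing sum modulo $\xi_2B^+$, which forces $a_0=0$ hence $\ol{[a_0]}=0$, cancelling $\xi_2$, and inducting with the help of separatedness.

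I expect the main obstacle to be the step that $\xi_2$ remains a non-zero-divisor after killing $\xi_1$. A direct manipulation is awkward because $R$ is an arbitrary perfect $k_E$-algebra, so one cannot assume that any particular one of $a,b,c,d$ is a unit and split into cases; passing through the Koszul complex of $(p,t)$ together with the invertibility of $N$ makes the argument uniform in $R$. The auxiliary facts used --- $p$-torsion-freeness of $W_{\cal{O}_E}(R)$, that a classical regular sequence is Koszul-regular, and the behaviour of adic completion along a non-zero-divisor --- are all standard.
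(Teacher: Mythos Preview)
Your argument is correct and follows essentially the same route as the paper: both reduce the non-zero-divisor claim to the regularity of the sequence $(p,t)$ via the invertible matrix $N\in\GL_2(W_{\cal{O}_E}(R)[[t]])$, and then derive the expansion by the standard successive-approximation argument using $B^+/\xi_2\cong R$. Your Koszul-complex formulation is simply a more explicit packaging of the paper's one-line remark that the relation $x\xi_1+y\xi_2=0 \Rightarrow x=z\xi_2,\ y=-z\xi_1$ ``easily follows from the analogous result for $(p,t)$''; note also that the lower index of the sum in the statement should be $i=0$, as your proof correctly has it.
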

\begin{proof}
For the first part, we need to show that if 
\[
x\xi_1+y\xi_2=0
\]
for some $x,y\in W_{\cal{O}_E}(R)[[t]]$, then $x=z\xi_2, y=-z\xi_1$ for some $z\in W_{\cal{O}_E}(R)[[t]]$.
This easily follows from the analogous result for $(\pi,t)$ instead of $(\xi_1,\xi_2)$, since $(\pi,t)=(\xi_1,\xi_2)A$ for some $A\in \GL_2(W_{\cal{O}_E}(R)[[t]])$.

The second part follows from the standard argument since
\[
R\underset{\sim}{\overset{\ol{[-]}}{\longrightarrow}}B^+_{\xi_1,\xi_2}(R)/\xi_2 \underset{\sim}{\overset{\xi_2^i\times }{\longrightarrow}}  \xi_2^iB^+_{\xi_1,\xi_2}(R)/\xi_2^{i+1}B^+_{\xi_1,\xi_2}(R)
\]
is an isomorphism for every $i$.
Here the first map is bijective by Lemma \ref{lemm:BquotR} and the second is bijective by the first part.
\end{proof}
\begin{lemm}\label{lemm:Bdvr}
If $k$ is a perfect field containing $k_E$ and $(\xi_1,\xi_2)\in \Divtil(k)$, then $B^+_{\xi_1,\xi_2}(k)$ is a $\xi_2$-complete discrete valuation field and $B_{\xi_1,\xi_2}(k)$ is its fractional field.
\end{lemm}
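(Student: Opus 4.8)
The plan is to read off everything from the explicit $\xi_2$-adic expansion of Lemma~\ref{lemm:xiadicexp}, specialised to $R=k$. Write $B:=B^+_{\xi_1,\xi_2}(k)$. By construction $B$ is $\xi_2$-adically complete; Lemma~\ref{lemm:xiadicexp} says that $\xi_2$ is a non-zero-divisor in $B$ and that every element of $B$ has a unique expansion $\sum_{i\ge 0}\ol{[a_i]}\,\xi_2^i$ with $a_i\in k$. In particular $\bigcap_{n\ge 0}\xi_2^n B=0$, so $B$ is $\xi_2$-adically separated, and Lemma~\ref{lemm:BquotR} gives $B/\xi_2 B\cong k$, a field.

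First I would check that $B$ is a local ring with maximal ideal $\xi_2 B$. Since $B$ is $\xi_2$-adically complete, every element of $1+\xi_2 B$ is a unit (its inverse is the convergent geometric series), so $\xi_2 B$ lies in the Jacobson radical of $B$; as $\xi_2 B$ is a maximal ideal, it must then \emph{be} the Jacobson radical, hence the unique maximal ideal.

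Next I would use the expansion to show that every nonzero $b\in B$ is uniquely of the form $b=\xi_2^n u$ with $n\ge 0$ and $u\in B^\times$: taking $n$ to be the least index with $a_n\neq 0$ in the expansion of $b$, the element $u:=\sum_{i\ge 0}\ol{[a_{n+i}]}\,\xi_2^i$ has nonzero ``constant term'', so $u\notin\xi_2 B$ and therefore $u$ is a unit by the previous paragraph. This at once shows that $B$ is an integral domain and that $b\mapsto n$ is a discrete valuation on $B$ whose valuation ideals are precisely the $\xi_2^n B$; hence $B$ is a discrete valuation ring with uniformiser $\xi_2$, and it is complete since the valuation topology coincides with the ($\xi_2$-adically complete and separated) $\xi_2$-adic topology. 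Finally, inverting $\xi_2$ inverts every nonzero element of $B$, so $B_{\xi_1,\xi_2}(k)=B[\xi_2^{-1}]$ is a field, which is then the fraction field of $B$.

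I do not expect a genuine obstacle here: granted Lemma~\ref{lemm:xiadicexp}, the statement is a routine piece of commutative algebra. The only points that need a little care are the separatedness of the $\xi_2$-adic topology (supplied for free by the uniqueness clause of Lemma~\ref{lemm:xiadicexp}) and the step from ``$\xi_2$-adically complete with $B/\xi_2 B$ a field'' to ``local with principal maximal ideal''.
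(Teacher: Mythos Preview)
Your argument is correct and is exactly the approach the paper intends: the paper's own proof is the single sentence ``This follows from Lemma~\ref{lemm:xiadicexp},'' and you have simply unpacked the routine commutative algebra behind that sentence (separatedness from uniqueness of the expansion, locality from $B/\xi_2 B\cong k$ together with $\xi_2$-adic completeness, and the DVR structure from writing each nonzero element as $\xi_2^n u$).
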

\begin{proof}
This follows from Lemma \ref{lemm:xiadicexp}.
\end{proof}
We also need the following lemma:
\begin{lemm}
Let $(L^+_{\Divtil} G)^{\geq m}\subset L^+_{\Divtil} G$ be a closed subgroup define by the kernel of 
\[
G(B^+_{\xi_1,\xi_2}(R))\to G(B^+_{\xi_1,\xi_2}(R)/\xi_2^m).
\]
Then there are natural isomorphisms
\[
L^+_{\Divtil} G/(L^+_{\Divtil} G)^{\geq 1}\cong \bar{G}\times_{\Spec k_E} \Divtil
\]
and
\[
(L^+_{\Divtil} G)^{\geq m}/(L^+_{\Divtil} G)^{\geq m+1}\cong (\rm{Lie}(\bar{G})\times_{\Spec k_E} \Divtil)\{m\}, 
\]
where $\{m\}$ means a twist defined by $-\otimes_{R} \xi_2^mB^+_{\xi_1,\xi_2}(R)/\xi_2^{m+1} B^+_{\xi_1,\xi_2}(R)$ on $\Spec R\to \Divtil$.
\end{lemm}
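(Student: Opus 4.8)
The plan is to reduce the isomorphisms to standard facts about smooth group schemes and their filtration by congruence subgroups, now relativized over $\Divtil$. The key point is that $B^+_{\xi_1,\xi_2}(R)$ behaves, for the purposes of this filtration, exactly like a complete discrete valuation ring with uniformizer $\xi_2$ and residue ring $R$, thanks to Lemma \ref{lemm:BquotR} and Lemma \ref{lemm:xiadicexp}. Concretely, those lemmas give for each $m$ a natural (in $R\to\Divtil$) short exact sequence of $R$-modules
\[
0\to \xi_2^m B^+_{\xi_1,\xi_2}(R)/\xi_2^{m+1}\to B^+_{\xi_1,\xi_2}(R)/\xi_2^{m+1}\to B^+_{\xi_1,\xi_2}(R)/\xi_2^m\to 0,
\]
with the submodule a free rank-one $R$-module generated by $\xi_2^m$, and with $B^+_{\xi_1,\xi_2}(R)/\xi_2\cong R$.

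First I would establish the bottom isomorphism $L^+_{\Divtil}G/(L^+_{\Divtil}G)^{\geq 1}\cong \bar G\times_{\Spec k_E}\Divtil$. By definition $(L^+_{\Divtil}G)^{\geq 1}$ is the kernel of $G(B^+_{\xi_1,\xi_2}(R))\to G(B^+_{\xi_1,\xi_2}(R)/\xi_2)=G(R)$; since $G$ is defined over $\cal{O}_E[[t]]$ and $B^+_{\xi_1,\xi_2}(R)/\xi_2\cong R$ is an $\cal{O}_E[[t]]$-algebra via $p,t\mapsto 0$, i.e.\ via $\cal{O}_E[[t]]\to k_E\to R$, the base change of $G$ along this is precisely $\bar G\times_{\Spec k_E}\Spec R$. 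So the quotient presheaf sends $\Spec R\to\Divtil$ to $\bar G(R)$, which is $(\bar G\times_{\Spec k_E}\Divtil)(R)$; one checks this identification is compatible with the transition maps and with étale sheafification, using that $G$ (hence $\bar G$) is smooth so $G(B^+_{\xi_1,\xi_2}(R))\to G(R)$ is surjective after an étale cover. For the subquotients $(L^+_{\Divtil}G)^{\geq m}/(L^+_{\Divtil}G)^{\geq m+1}$, I would argue as in the classical case: an element of the kernel of $G(B^+/\xi_2^{m+1})\to G(B^+/\xi_2^m)$ is, locally, of the form $\exp$ or $1 + \xi_2^m X$ with $X$ a section of $\rm{Lie}(G)$ over $B^+/\xi_2$; more precisely, choosing an embedding $G\hookrightarrow\GL_N$ over $\cal{O}_E[[t]]$ and using that $\xi_2^m B^+/\xi_2^{m+1}$ has square-zero image, the congruence subquotient is identified with $\rm{Lie}(G)\otimes_{B^+/\xi_2}(\xi_2^m B^+/\xi_2^{m+1})$, and since $B^+/\xi_2\cong R$ pulls $\rm{Lie}(G)$ back to $\rm{Lie}(\bar G)\times_{\Spec k_E}\Spec R$, this is exactly $(\rm{Lie}(\bar G)\times_{\Spec k_E}\Divtil)\{m\}$ by the definition of the twist $\{m\}$.

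The main obstacle I expect is purely bookkeeping about naturality and sheafification rather than any hard input: one must check that the splitting $\xi_2^m B^+/\xi_2^{m+1}\cong R\cdot\xi_2^m$ and the resulting identifications vary functorially in $(\Spec R,\xi_1,\xi_2)$ over $\Divtil$ — this is where the twist $\{m\}$ enters, since the line $\xi_2^m B^+/\xi_2^{m+1}$ is not canonically trivial but is an $R$-line varying with the point of $\Divtil$ — and that passing from the presheaf quotient to the étale-sheaf quotient commutes with these identifications, for which smoothness of $G$ over $\cal{O}_E[[t]]$ (ensuring the relevant maps on points are étale-locally surjective) is the essential ingredient. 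Once naturality is in hand, both displayed isomorphisms follow formally, exactly parallel to the equal-characteristic and Witt-vector cases treated in \cite{MV} and \cite{BS}.
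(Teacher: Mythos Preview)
Your proposal is correct and follows essentially the same approach as the paper: the paper's proof simply cites Lemma~\ref{lemm:BquotR} for the first isomorphism and Lemma~\ref{lemm:xiadicexp} for the second, and you have unpacked exactly how those lemmas are used (the identification $B^+/\xi_2\cong R$ as a $k_E$-algebra for the first, and the square-zero ideal $\xi_2^m B^+/\xi_2^{m+1}$ yielding the Lie algebra twist for the second). Your additional discussion of naturality, the role of smoothness of $G$ for \'etale-local surjectivity, and the origin of the twist $\{m\}$ is more explicit than the paper, but the underlying argument is the same.
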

\begin{proof}
The first isomorphism follows from Lemma \ref{lemm:BquotR}.
The second is also easy using Lemma \ref{lemm:xiadicexp}.
\end{proof}
\subsubsection{Schubert varieties}
Assume $G$ is split and $T\subset G$ is a maximal split torus.
For any algebraically closed field $C'$ containing $k_E$, by Lemma \ref{lemm:Bdvr} and the Cartan decomposition, we have
\[
G(B_{\xi_1,\xi_2}(C'))=\coprod_{\mu\in \bb{X}_*^+(T)} G(B^+_{\xi_1,\xi_2}(C'))\mu(\xi_2)G(B^+_{\xi_1,\xi_2}(C')),
\]
therefore
\[
\Gr_{G,\Divtil}(C')=\coprod_{\mu\in \bb{X}_*^+(T)} G(B^+_{\xi_1,\xi_2}(C'))\mu(\xi_2)G(B^+_{\xi_1,\xi_2}(C'))/G(B^+_{\xi_1,\xi_2}(C')).
\]
So we can define the Schubert varieties and Schubert cells:
\begin{defi}
\begin{enumerate}
\item For $\mu\in \bb{X}_*^+(T)$, let
\[
\Gr_{G,\Divtil,\leq \mu}\subset \Gr_{G,\Divtil}
\]
be the subfunctor of maps $\Spec R\to \Gr_{G,\Divtil,\leq \mu}$ such that for all the geometric points $\Spec C'\to \Spec R$, the corresponding $C'$-valued point is in 
\[
G(B^+_{\xi_1,\xi_2}(C'))\mu'(\xi_2)G(B^+_{\xi_1,\xi_2}(C'))/G(B^+_{\xi_1,\xi_2}(C'))
\]
 for some $\mu'$ with $\mu'\leq \mu$.
\item For $\mu\in \bb{X}_*^+(T)$, let
\[
\Gr_{G,\Divtil, \mu}\subset \Gr_{G,\Divtil}
\]
be the subfunctor of maps $\Spec R\to \Gr_{G,\Divtil, \mu}$ such that for all the geometric points $\Spec C'\to \Spec R$, the corresponding $C'$-valued point is in 
\[
G(B^+_{\xi_1,\xi_2}(C'))\mu(\xi_2)G(B^+_{\xi_1,\xi_2}(C'))/G(B^+_{\xi_1,\xi_2}(C')).
\]
\end{enumerate}
\end{defi}
We sometimes write $\Gr_{G, \leq \mu}$ or $\Gr_{\leq \mu}$ for $\Gr_{G,\Divtil, \leq \mu}$, and similarly for $\Gr_{G,\Divtil, \mu}$.
Define the section
\[
[\mu]\colon \Divtil\to \Gr_{G,\Divtil}
\]
by 
\begin{align*}
\begin{array}{cccl}
\Divtil&\to &L\bb{G}_m&\overset{\mu}{\to} LT\subset LG\to \Gr_{G,\Divtil}\\
(\xi_1,\xi_2)&\mapsto &\xi_2.&
\end{array}
\end{align*}
\subsection{Affine flag variety over $\Divtil$}
We can also define a version of the affine flag variety over $\Divtil$.
Define
\[
\cal{I}\colon \rm{PerfAffSch}^{\rm{op}}_{/\Divtil}\to \rm{Sets}
\]
by 
\[
\Spec R\mapsto \left\{g\in G(B^+_{\xi_1,\xi_2}(R))\relmiddle| \begin{array}{l}
\text{The image of $g$ in $G(B^+_{\xi_1,\xi_2}(R)/\xi_2)$}\\
\text{is in $B(B^+_{\xi_1,\xi_2}(R)/\xi_2)$.}
\end{array} \right\}
\]
\begin{defi}
The affine flag variety over $\Divtil$ is defined as the following quotient stack in \'{e}tale topology:
\[
\Fl_{G,\Divtil}:=[L_{\Divtil}G/\cal{I}].
\]
\end{defi}
\begin{rmk}
Unlike the case of a usual affine flag variety, there does not exist a subgroup $I \subset G$ such that $\cal{I}=L^+I$.
\end{rmk}
\begin{prop}\label{prop:mapFlGr}
There is a natural map
\[
\Fl_{G,\Divtil}\to \Gr_{G,\Divtil}, 
\]
which is a fiber bundle with fibers $(\bar{G}/\bar{B})^{\perf}$.
In particular, it is perfectly proper and perfectly smooth.
\end{prop}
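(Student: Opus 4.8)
The strategy is to realize the morphism as the fiber bundle associated to a torsor, and then reduce everything to the homogeneous space $L^+_{\Divtil}G/\cal{I}$.

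First I would record that $\cal{I}$ is a closed subgroup of $L^+_{\Divtil}G$: by construction it is the preimage of $\bar{B}\times_{\Spec k_E}\Divtil$ under the evaluation homomorphism $L^+_{\Divtil}G\to L^+_{\Divtil}G/(L^+_{\Divtil}G)^{\geq 1}\cong \bar{G}\times_{\Spec k_E}\Divtil$ furnished by the lemma above, and $\bar{B}\subset\bar{G}$ is closed; in particular $(L^+_{\Divtil}G)^{\geq 1}\subset\cal{I}\subset L^+_{\Divtil}G\subset L_{\Divtil}G$. Hence there is a natural morphism of \'{e}tale quotient stacks
\[
\Fl_{G,\Divtil}=[L_{\Divtil}G/\cal{I}]\longrightarrow[L_{\Divtil}G/L^+_{\Divtil}G]=\Gr_{G,\Divtil},
\]
and by \'{e}tale descent it is the bundle associated to the $L^+_{\Divtil}G$-torsor $L_{\Divtil}G\to\Gr_{G,\Divtil}$ and the $L^+_{\Divtil}G$-space $L^+_{\Divtil}G/\cal{I}$. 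So all local properties of the morphism, as well as its fibers, coincide with those of $L^+_{\Divtil}G/\cal{I}\to\Divtil$.

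Next I would compute $L^+_{\Divtil}G/\cal{I}$. Since $(L^+_{\Divtil}G)^{\geq 1}$ lies both in $\cal{I}$ and in the kernel of the evaluation map, the latter descends to an isomorphism $L^+_{\Divtil}G/\cal{I}\cong(\bar{G}\times_{\Spec k_E}\Divtil)\big/(\bar{B}\times_{\Spec k_E}\Divtil)$, i.e.\ the \'{e}tale-sheaf quotient of $\bar{G}^{\perf}$ by $\bar{B}^{\perf}$ over $\Divtil$. Because $\bar{B}$ is connected solvable it is special, so $\bar{G}\to\bar{G}/\bar{B}$ is a Zariski-locally trivial $\bar{B}$-torsor; applying $(-)^{\perf}$ and using that perfection commutes with fiber products and with Zariski localization, the same holds after perfecting, whence $\bar{G}^{\perf}/\bar{B}^{\perf}\cong(\bar{G}/\bar{B})^{\perf}$ and $L^+_{\Divtil}G/\cal{I}\cong(\bar{G}/\bar{B})^{\perf}\times_{\Spec k_E}\Divtil$. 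Combined with the previous step this shows $\Fl_{G,\Divtil}\to\Gr_{G,\Divtil}$ is a fiber bundle, \'{e}tale-locally on the base a product with $(\bar{G}/\bar{B})^{\perf}$.

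Finally, $\bar{G}/\bar{B}$ is smooth projective over $k_E$, so $(\bar{G}/\bar{B})^{\perf}$ is perfectly smooth and perfectly proper over $k_E$, hence over $\Divtil$ by base change; a morphism that is \'{e}tale-locally on the target such a product is then perfectly smooth and perfectly proper, these properties being \'{e}tale-local on the target and stable under base change. The one point requiring care is the bookkeeping of perfections in the previous paragraph --- verifying that the isomorphisms of the lemma are $\bar{B}$-equivariant and that forming $\bar{G}/\bar{B}$ before or after perfecting gives the same presheaf on $\rm{PerfAlg}_{k_E}$ --- which is precisely where the specialness of $\bar{B}$, producing an honest Zariski-local section, is used.
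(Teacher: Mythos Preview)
Your proof is correct and follows exactly the same approach as the paper: the paper's proof consists of the single sentence ``This follows from the isomorphism $L^+G/\cal{I}\cong G/B$, which is the consequence of Lemma~\ref{lemm:BquotR},'' and you have simply unpacked this isomorphism and the fiber-bundle consequence in detail. Your additional care with the specialness of $\bar{B}$ and the compatibility of perfection with the quotient is a nice touch that the paper leaves implicit.
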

\begin{proof}
This follows from the isomorphism
\[
L^+G/\cal{I}\cong G/B,
\]
which is the consequence of Lemma \ref{lemm:BquotR}.
\end{proof}
Assume $G$ is split.
Just as in the case of affine Grassmannian, there is a decomposition 
\[
G(B_{\xi_1,\xi_2}(C'))=\coprod_{w\in \wt{W}} \cal{I}(C')w \cal{I}(C')
\]
for any algebraically closed field $C'$ containing $k_E$, where $\wt{W}$ is an affine Weyl group
\[
\wt{W}=N_G(T)(B_{\xi_1,\xi_2}(C'))/T(B_{\xi_1,\xi_2}^+(C')), 
\]
which does not depend on the choice of $C'$.

In the same way as above, we can define subfunctors
\begin{align*}
&\Fl_{G,\Divtil,\leq w}\subset \Fl_{G,\Divtil}\text{, and}\\
&\Fl_{G,\Divtil,w}\subset \Fl_{G,\Divtil}
\end{align*}
called the affine Schubert variety and affine Schubert cell, respectively.
\section{Affine Grassmannian over $\Divtil$ is an ind-perfectly projective ind-scheme}\label{scn:indsch}
In this subsection, we show that $\Gr_{G,\Divtil}$ is an ind-perfectly projective ind-scheme over $\Divtil$.
The argument here is almost the same as the proof of \cite[Theorem 8.3, Corollary 9.6]{BS}.
\subsection{Preliminary on line bundles}\label{ssc:prelinbdl}
First, we define several relative properties for line bundles on perfect schemes.
In this subsection, let $k$ be a perfect field of characteristic $p$.
The following are the definitions in \cite{CT}:
\begin{defi}(\cite{CT})
Let $f\colon X\to S$ be a proper morphism of noetherian schemes.
Let $\cal{L}$ be a line bundle on $X$.
\begin{enumerate}
\item $\cal{L}$ is $f$-nef if for any field $K$ and morphism $\Spec K\to S$, the pullback $\cal{L}|_{X\times_S \Spec K}$ is nef.
\item $\cal{L}$ is $f$-free if the natural homomorphism $f^*f_*\cal{L}\to \cal{L}$ is surjective.
\item $\cal{L}$ is {$f$-very ample} if $\cal{L}$ is $f$-free and the induced map $X\to \bb{P}(f_*\cal{L})$ is a closed immersion.
\item $\cal{L}$ is {$f$-ample} if $\cal{L}^{\otimes m}$ is $f$-very ample for some $m\geq 1$.
\item $\cal{L}$ is {$f$-semi-ample} if $\cal{L}^{\otimes m}$ is $f$-free for some $m\geq 1$.
\item $\cal{L}$ is {$f$-weakly big} if there exists an $f$-ample line bundle $\cal{A}$ on $X$ and a positive integer $m\geq 1$ such that 
\[
(f|_{X_{\rm{red}}})_*((\cal{L}^{\otimes m}\otimes \cal{A})|_{X_{\rm{red}}})\neq 0.
\]
\end{enumerate}
\end{defi}
For a morphism of the perfections of noetherian schemes, the above properties are defined by passing to some models.
Namely, recall the result on the existence of models:
\begin{prop}
Let $X$ and $S$ be perfectly finitely presented schemes over $k$ (i.e. perfect qcqs schemes that can be written as the perfection of a finitely presented scheme over $k$).
Let $f\colon X\to S$ be a morphism.
Let $\cal{L}$ be a line bundle on $X$.
Then there exists a pair 
\[
(f'\colon X'\to S',\cal{L}'),
\]
such that $f'\colon X'\to S'$ is a morphism between finitely presented schemes satisfying $(f')^{\perf}=f$, and that $\cal{L}'$ is a line bundle on $X'$ satisfying $(\cal{L}')^{\perf}=\cal{L}$.
Here $(\cal{L}')^{\perf}$ is the pullback of $\cal{L}'$ under the canonical morphism $X=(X')^{\perf} \to X'$.
\end{prop}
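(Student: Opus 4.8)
The plan is to deduce the statement from the standard limit formalism for schemes, applied to the presentation of a perfect scheme as a cofiltered inverse limit of finitely presented schemes along Frobenius. By hypothesis we may fix finitely presented $k$-schemes $X_0$, $S_0$ together with identifications $X_0^{\perf}\cong X$, $S_0^{\perf}\cong S$; as recalled in the Notations,
\[
X \;=\; \lim_{n}\, X_0^{(n)}, \qquad S \;=\; \lim_{n}\, S_0^{(n)},
\]
where $X_0^{(n)}$ (resp.\ $S_0^{(n)}$) denotes $X_0$ (resp.\ $S_0$) with its $k$-structure twisted by $\sigma^{n}$ and the transition maps are the Frobenius morphisms, which are affine. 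In particular $X$ and $S$ are qcqs, and the Frobenius transition maps become isomorphisms after perfection, so $(X_0^{(n)})^{\perf}\cong X$ and $(S_0^{(n)})^{\perf}\cong S$ canonically for every $n$. All the limit lemmas I will use are already assembled in \cite[\S 3]{BS} (and rest ultimately on the limit formalism of EGA IV, \S 8).

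First I would descend the morphism $f$. Since $S=\lim_n S_0^{(n)}$ along affine transition maps, $X$ is qcqs, and each $X_0^{(m)}$, $S_0^{(n)}$ is finitely presented over $k$, we have
\[
\Hom_k(X,S) \;=\; \lim_{n}\,\Hom_k\!\big(X, S_0^{(n)}\big) \;=\; \lim_{n}\,\colim_{m}\,\Hom_k\!\big(X_0^{(m)}, S_0^{(n)}\big),
\]
so $f$ is already induced by a morphism $f_0\colon X_0^{(m_0)}\to S_0^{(n_0)}$ of finitely presented $k$-schemes for suitable $m_0,n_0$, with $(f_0)^{\perf}=f$ under the identifications above. (There is no need to make $m_0$ and $n_0$ agree, since passing to the perfection forgets the Frobenius twist.) Next, a line bundle is a finitely presented quasi-coherent module, so by the module-theoretic limit lemma there is an index $m_1$ and a finitely presented quasi-coherent sheaf $\cal{L}_{m_1}$ on $X_0^{(m_1)}$ pulling back to $\cal{L}$ on $X$. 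The locus in $X_0^{(m_1)}$ where $\cal{L}_{m_1}$ is invertible is open, and its preimages in the $X_0^{(m)}$ with $m\ge m_1$ form a cofiltered system of quasi-compact open subschemes whose limit equals all of $X$; since a cofiltered limit of nonempty quasi-compact schemes is nonempty, after increasing $m_1$ we may assume $\cal{L}_{m_1}$ is a line bundle on $X_0^{(m_1)}$.

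Finally I would pass to a common level: set $m^{\ast}=\max(m_0,m_1)$, put $X':=X_0^{(m^{\ast})}$ and $S':=S_0^{(n_0)}$, let $f'\colon X'\to S'$ be the composite of the Frobenius $X_0^{(m^{\ast})}\to X_0^{(m_0)}$ with $f_0$, and let $\cal{L}'$ be the pullback of $\cal{L}_{m_1}$ along the Frobenius $X_0^{(m^{\ast})}\to X_0^{(m_1)}$. Then $X'$ and $S'$ are finitely presented over $k$, $\cal{L}'$ is a line bundle on $X'$, and $(f')^{\perf}=f$, $(\cal{L}')^{\perf}=\cal{L}$ because the Frobenius transition maps are isomorphisms after perfection. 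The argument is essentially bookkeeping within the limit formalism, so I do not expect a serious obstacle; the one point that deserves care is the descent of the property of \emph{being} a line bundle, which relies on openness of the locally-free-of-rank-one locus together with the quasi-compactness used to pull that property back to a finite level.
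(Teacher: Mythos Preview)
Your approach via the limit formalism is exactly the content of the references the paper cites (\cite[Proposition A.17, Lemma A.22]{Zhumixed}); the paper itself gives no argument beyond that citation, so in substance you are reproducing the intended proof.

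One step needs tightening. In descending invertibility you take the invertible locus $U_{m_1}\subset X_0^{(m_1)}$ of $\cal{L}_{m_1}$ and look at its preimages in the $X_0^{(m)}$. But the transition maps are iterated absolute Frobenius, hence universal homeomorphisms, so these preimages are topologically constant; their inverse limit is just the preimage of $U_{m_1}$ in $X$, not all of $X$, and the argument as written is circular. What actually grows with $m$ is the invertible locus $U_m$ of the pulled-back sheaf $\cal{L}_m$ on $X_0^{(m)}$. Equivalently, work with the closed complements $Z_m:=X_0^{(m)}\setminus U_m$: since pullback of an invertible sheaf is invertible one has $Z_{m'}\to Z_m$ for $m'\ge m$, and $\varprojlim_m Z_m=\varnothing$ because for each $x\in X$ the stalk $\cal{L}_x\cong\cal{O}_{X,x}$ is free of rank one over $\cal{O}_{X,x}=\colim_m\cal{O}_{X_0^{(m)},x_m}$, hence $\cal{L}_{m,x_m}$ is free of rank one for $m\gg 0$. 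As the $Z_m$ are closed in noetherian schemes, some $Z_m$ is empty. You already have all the ingredients; only the phrasing needs this adjustment.
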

\begin{proof}
\cite[Proposition A.17, Lemma A.22]{Zhumixed}.
\end{proof}
The pair $(f'\colon X'\to S',\cal{L}')$ is called a (finitely presented) model of $(f\colon X\to S,\cal{L})$.
\begin{lemm}\label{lemm:perfsemiample}
Let $f\colon X\to S$ be a morphism of perfectly finitely presented schemes over $k$.
Let $(f'\colon X'\to S',\cal{L}'), (f''\colon X''\to S'',\cal{L}'')$ be a finitely presented model of $(f\colon X\to S,\cal{L})$.
The following are equivalent:
\begin{enumerate}
\item $\cal{L}'$ is $f'$-semi-ample.
\item $\cal{L}''$ is $f''$-semi-ample.
\end{enumerate}
We say that $\cal{L}$ is $f$-semi-ample if the above conditions are satisfied.
\end{lemm}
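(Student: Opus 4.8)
The plan is to show that the property of being $f$-semi-ample does not depend on the choice of a finitely presented model, by reducing to a single common refinement of the two given models. First I would recall that any two finitely presented models of $(f\colon X\to S,\cal{L})$ become isomorphic after a further Frobenius twist: since $X=\lim(X'\xleftarrow{\sigma}X'^{(1)}\xleftarrow{\sigma}\cdots)$ and likewise for $X''$, and the transition maps are affine, the two projective systems defining $X$ from $X'$ and from $X''$ are cofinal in each other. Concretely, there is some $m\geq 0$ and a morphism of finitely presented models, i.e. a commutative square relating $(f'^{(m)},\cal{L}'^{(m)})$ — the $m$-fold Frobenius twist of $(f',\cal{L}')$ — to $(f'',\cal{L}'')$, compatibly with the identifications of perfections. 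So it suffices to prove two things: (a) $\cal{L}'$ is $f'$-semi-ample iff its Frobenius twist $\cal{L}'^{(m)}$ is $f'^{(m)}$-semi-ample; and (b) semi-ampleness is preserved under the kind of morphism of models that appears.

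For step (a), note that the $m$-fold Frobenius twist of a scheme over $k$ is just the base change along $\sigma^m\colon \Spec k\to\Spec k$, which is an isomorphism of schemes (not of $k$-schemes), and under this isomorphism $\cal{L}'^{(m)}$ corresponds to $\cal{L}'$ and $f'^{(m)}$ to $f'$. Since semi-ampleness — the existence of $m'\geq 1$ with $\cal{L}'^{\otimes m'}$ being $f'$-free, and $f'$-freeness is the surjectivity of $f'^*f'_*\cal{L}'^{\otimes m'}\to\cal{L}'^{\otimes m'}$ — is defined purely in terms of the morphism of schemes and the line bundle, it is insensitive to the $k$-structure, hence invariant under Frobenius twist. (Here one should be a little careful that $f_*$ commutes with the relevant base change; properness and the fact that $\sigma^m$ is an isomorphism on the base make this automatic.) For step (b), given a morphism of finitely presented models $g\colon X_1'\to X_2'$ over $h\colon S_1'\to S_2'$ with $g^*\cal{L}_2'\cong\cal{L}_1'$ and such that $g,h$ induce isomorphisms on perfections, one wants: $\cal{L}_2'$ is $f_2'$-semi-ample iff $\cal{L}_1'$ is $f_1'$-semi-ample. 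One direction is formal — pulling back a surjection $f_2'^*f_{2*}'\cal{L}_2'^{\otimes m'}\twoheadrightarrow\cal{L}_2'^{\otimes m'}$ along $g$ and using base change gives $f_1'$-freeness of $\cal{L}_1'^{\otimes m'}$. The other direction uses that $g$ becomes an isomorphism after perfection, so it is a universal homeomorphism which is finite (indeed a composite of relative Frobenii and an isomorphism after a twist), and finite universal homeomorphisms are enough to descend semi-ampleness: global sections of $\cal{L}_1'^{\otimes m'}$ over a twist agree with those of $\cal{L}_2'^{\otimes m'}$, and base-point-freeness can be checked after such a morphism.

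The cleanest way to organize step (b), which I would ultimately take, is to avoid analyzing $g$ directly and instead use that both $(f_1',\cal{L}_1')$ and $(f_2',\cal{L}_2')$ are simultaneously dominated: by the cofinality of Frobenius systems, after replacing each by a further Frobenius twist we may assume $g$ itself is an isomorphism of $k$-schemes carrying $\cal{L}_1'$ to $\cal{L}_2'$, at which point the equivalence is trivial. Combined with step (a), which handles the twisting, this closes the argument. The main obstacle I expect is the bookkeeping around the non-canonical nature of models and of the Frobenius twists — making precise, with compatible $k$-structures, the statement that any two models are related by a zigzag of Frobenius twists and a genuine isomorphism, and checking that $f_*$ and formation of $\cal{L}^{\otimes m'}$ interact correctly with all of these. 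The actual commutative-algebra or cohomological content is light once that reduction is in place; the care is entirely in the functoriality and in invoking \cite{Zhumixed} for the existence and comparison of models.
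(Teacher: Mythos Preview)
Your approach has a genuine gap in step (b). The claim that ``after replacing each by a further Frobenius twist we may assume $g$ itself is an isomorphism'' is false: take $X'=\Spec k[\epsilon]/(\epsilon^2)$ and $X''=\Spec k$, both with perfection $\Spec k$. Since $(X')^{(m)}$ is the same scheme $X'$ with a different $k$-structure, it is never reduced, so no map $(X')^{(m)}\to X''$ can be an isomorphism. In general, two finitely presented models of the same perfect scheme may differ by nilpotent thickenings, and Frobenius twisting does not remove those. Your earlier sketch via ``$g$ is a finite universal homeomorphism, hence a composite of relative Frobenii and an isomorphism'' runs into the same problem.

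The paper's proof sidesteps this entirely. It uses the comparison map $g\colon (X')^{(m)}\to X''$ together with the identification $g^*\cal{L}''\cong(\cal{L}')^{\otimes p^m}$ (a $p^m$-th tensor power, not merely a twist). If $\cal{L}''$ is $f''$-semi-ample, say $(f'')^*(f'')_*(\cal{L}'')^{\otimes N}\twoheadrightarrow(\cal{L}'')^{\otimes N}$, one pulls this surjection back along $g$ and factors through the adjunction to obtain $(f')^*(f')_*(\cal{L}')^{\otimes Np^m}\twoheadrightarrow(\cal{L}')^{\otimes Np^m}$, so $\cal{L}'$ is $f'$-semi-ample. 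The reverse implication is handled not by inverting $g$ but by \emph{symmetry}: there is equally a map $(X'')^{(m')}\to X'$ with $(\cal{L}'')^{\otimes p^{m'}}$ pulled back from $\cal{L}'$, and the same one-line argument applies. Your ``one direction is formal'' observation is precisely this pullback step; you only needed to run it twice, once in each direction, rather than attempt to descend semi-ampleness backwards through $g$.
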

\begin{proof}
Since $f'$ and $f''$ are finitely presented, there exists a commutative diagram
\[
\xymatrix{
(X')^{(m)}\ar[r]^{f'}\ar[d]_{g}&(S')^{(m)}\ar[d]^{h}\\
X''\ar[r]_{f''}&S''\\
}
\]
such that $g^{\perf}=\rm{id}_X$, $h^{\perf}=\rm{id}_S$ and $g^*\cal{L}''=(\cal{L}')^{\otimes p^m}$.
If $\cal{L}''$ is $f''$-semi-ample, then
\[
(f'')^*\cal{Q}''\to (\cal{L}'')^{\otimes N}
\]
is surjective for some $N$ and some quasi-coherent sheaf $\cal{Q}''$ on $S''$.
In fact, 
\[
(f'')^*(f'')_*(\cal{L}'')^{\otimes N}\to (\cal{L}'')^{\otimes N}
\]
for some $N$, so we can put $\cal{Q}''=(f'')_*(\cal{L}'')^{\otimes N}$.
Thus $g^*(f'')^*\cal{Q}''\to g^*(\cal{L}'')^{\otimes N}$ is surjective, that is, 
\[
(f')^{*}h^*\cal{Q}''\to (\cal{L}')^{\otimes Np^m}
\]
 is surjective.
This factors through
\[
(f')^*(f')_*(\cal{L}')^{\otimes Np^m}\to (\cal{L}')^{\otimes Np^m}, 
\]
which is therefore surjective.
Thus $\cal{L}'$ is $f'$-semi-ample.

By the same argument, if $\cal{L}'$ is $f'$-semi-ample, then $\cal{L}''$ is $f''$-semi-ample.
\end{proof}
\begin{lemm}\label{lemm:perfectpush}
Let $f\colon X\to S$ be a morphism of perfectly finitely presented schemes over $k$.
Let $(f'\colon X'\to S',\cal{L}')$ be a finitely presented model of $(f\colon X\to S,\cal{L})$.
Then there is a canonical isomorphism
\[
f_*\cal{L}=\lim_{\substack{\longrightarrow \\ m\geq 1}} f'_*\cal{L}'^{\otimes p^m}
\]
as sheaves of abelian groups on $|S|=|S'|$.
Here the transition map
\[
f'_*\cal{L}'^{\otimes p^m}\to f'_*\cal{L}'^{\otimes p^{m+1}}
\]
is induced by a map $a\mapsto a^p$.
\end{lemm}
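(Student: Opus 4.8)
The plan is to reduce the statement for the perfect scheme $f\colon X\to S$ to the corresponding statement for a finitely presented model $f'\colon X'\to S'$, where it is classical. First I would recall that the perfection $X=(X')^{\perf}$ is computed as the inverse limit $X=\lim_m (X')^{(m)}$ along the transition maps given by the relative Frobenius, and likewise $S=\lim_m (S')^{(m)}$; under this identification the pullback of $\cal{L}'$ to $X=(X')^{(m)}$ is $\cal{L}'^{\otimes p^m}$, since Frobenius raises a line bundle to its $p$-th tensor power. Concretely, writing $\pi_m\colon X=(X')^{(m)}\to X'$ for the $m$-th projection (so $\pi_m$ is the $m$-fold Frobenius followed by nothing, composed with the structure map), we have $\pi_m^*\cal{L}'=\cal{L}'^{\otimes p^m}$ and the map $\cal{L}'^{\otimes p^m}\to \cal{L}'^{\otimes p^{m+1}}$ dual to the Frobenius is exactly the $p$-th power map $a\mapsto a^p$ on sections.

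Next I would invoke compatibility of pushforward with (filtered) limits of schemes along affine transition maps. Since $X'\to S'$ is finitely presented (hence qcqs) and the transition maps in the tower $(X')^{(m+1)}\to (X')^{(m)}$ are affine — indeed, relative Frobenius of a finitely presented scheme is integral, and over a perfect base it is even finite — the formation of $f'_*$ of a quasi-coherent sheaf commutes with the base change $S=\lim_m (S')^{(m)}\to S'$. More precisely, for each $m$ one has $f_{m,*}(\pi_m^*\cal{L}')=$ (base change to $(S')^{(m)}$ of $f'_*\cal{L}'^{\otimes p^m}$) by flat (here just: the topological spaces agree and we are only claiming an isomorphism of sheaves of abelian groups on the common underlying space $|S|=|S'|$) base change along the Frobenius tower, and then passing to the colimit over $m$ gives the statement. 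The cleanest route is: $f_*\cal{L}$ is, by definition, $\lim$ or rather colim computed after noting $\cal{L}=\colim_m \pi_m^*\cal{L}'$ as a sheaf on $|X|$ with the Frobenius transition maps (this is literally the description of a line bundle on a perfection pulled back from a model), and $f_*$ commutes with this particular filtered colimit because on the qcqs scheme $X'$ pushforward along $f'$ commutes with filtered colimits of quasi-coherent sheaves, and the relevant colimit is indexed by the poset $\{m\ge 1\}$.

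I would then assemble the argument: $f_*\cal{L}\;=\;f_*\bigl(\colim_{m\ge 1}\pi_m^*\cal{L}'\bigr)\;=\;\colim_{m\ge 1} f_*\pi_m^*\cal{L}'\;=\;\colim_{m\ge 1} f'_*\cal{L}'^{\otimes p^m}$, where the middle equality uses that $f'$ is qcqs so that $f'_*$ (equivalently $f_*$, as the underlying topological map is the same) preserves filtered colimits of quasi-coherent modules, and the last equality uses $\pi_m^*\cal{L}'=\cal{L}'^{\otimes p^m}$ together with the identification of the transition maps with the $p$-th power maps. One should note the claimed isomorphism is only as sheaves of abelian groups on $|S|=|S'|$, which lets me ignore the (non-flat) ring maps underlying the Frobenius and avoid any subtlety about the module structure.

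The main obstacle, and the point that needs the most care, is justifying that $f_*$ commutes with the specific filtered colimit $\cal{L}=\colim_m \pi_m^*\cal{L}'$; this is where qcqs-ness of $X'$ (equivalently perfect qcqs-ness of $X$, guaranteed by the hypothesis that $X$ is perfectly finitely presented) is essential, and one must be a little careful that the colimit is taken in the category of quasi-coherent $\cal{O}_{X'}$-modules along the Frobenius-twist transition maps rather than naively. A secondary point is checking that under the chosen model the pullback of $\cal{L}'$ to the $m$-th stage of the tower really is $\cal{L}'^{\otimes p^m}$ with the asserted transition maps — this is a direct computation with the relative Frobenius and the Teichmüller-free description of line bundle pullback, essentially the observation that $F^*M\cong M^{\otimes p}$ for an invertible module $M$ in characteristic $p$, which I would state but not belabor.
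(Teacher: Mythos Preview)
Your proposal is correct and follows essentially the same approach as the paper: both arguments rest on identifying $\cal{L}$ on $|X|=|X'|$ with the filtered colimit $\colim_m \cal{L}'^{\otimes p^m}$ (the paper phrases this as $\cal{L}=\cal{L}'\otimes_{\cal{O}_{X'}}\cal{O}_X$) and then commuting $f_*$ past this colimit. The paper's proof is simply terser---it checks the identity directly on sections over opens of $|S|$ and defers to \cite[Lemma 3.6]{BS}---whereas you spell out the Frobenius tower and invoke qcqs-ness explicitly, but the substance is the same.
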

\begin{proof}
Let $|f|\colon |X|\to |S|$ be the underlying map of topological spaces of $f$.
It suffices to show that for any open subset $U\subset |S|=|S'|$, 
\[
\Gamma(|f|^{-1}(U),\cal{L})=\lim_{\substack{\longrightarrow \\ m}} \Gamma(|f|^{-1}(U), \cal{L}'^{\otimes p^m}).
\]
This equality can be easily deduced from the fact that $\cal{L}=\cal{L}'\otimes_{\cal{O}_{X'}}\cal{O}_X$ as a sheaf of abelian group on $|X|$.
See also the proof of \cite[Lemma 3.6]{BS}.
\end{proof}
Recall the following lemma in \cite{BS}:
\begin{lemm}(\cite[Lemma 3.6]{BS})\label{lemm:absample}
Let $X$ be a perfectly finitely presented scheme over $k$ and $\cal{L}$ a line bundle on $X$.
Let $(X',\cal{L}')$ be a finitely presented model of $(X,\cal{L})$.
The following are equivalent:
\begin{enumerate}
\item $\cal{L}'$ is ample, i.e. for any $x\in X'$, there exists a section $s'\in \Gamma(X',\cal{L}^{\prime \otimes m})$ for some $m$ such that
\[
X'_{s'}:=\{y\in X'\mid s'(y)\neq 0\}
\] 
is an affine neighborhood of $x$.
\item $\cal{L}$ is ample, in the same sense as (i).
\end{enumerate}
\end{lemm}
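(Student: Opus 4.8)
The plan is to transport ampleness across the canonical projection $\pi\colon X=(X')^{\perf}\to X'$. This $\pi$ is a cofiltered limit of (absolute) Frobenii, hence an affine, integral, surjective universal homeomorphism; in particular $|\pi|\colon|X|\to|X'|$ is a homeomorphism, and for every open $U'\subset X'$ its preimage $U:=\pi^{-1}(U')\subset X$ satisfies $U=(U')^{\perf}$. Two observations about such a $U$ will do all the work. First, $U$ is affine if and only if $U'$ is: if $U'$ is affine then $U=(U')^{\perf}$ is the spectrum of a perfection of a ring, hence affine; conversely, if $U$ is affine then $U'$ is affine because $\pi|_{U}\colon U\to U'$ is an integral surjection onto a qcqs scheme with affine source, and such a target is affine (this is the affineness criterion used in \cite[Lemma 3.6]{BS}). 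Second, since $\pi^{*}\cal{L}'=\cal{L}$, a section $s'\in\Gamma(X',\cal{L}'^{\otimes N})$ pulls back to $s:=\pi^{*}s'\in\Gamma(X,\cal{L}^{\otimes N})$ with $X_{s}=\pi^{-1}(X'_{s'})$ under $|X|\cong|X'|$.

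For $(i)\Rightarrow(ii)$, fix $x\in|X|=|X'|$. Ampleness of $\cal{L}'$ gives some $m$ and $s'\in\Gamma(X',\cal{L}'^{\otimes m})$ with $X'_{s'}$ an affine open neighbourhood of $x$; then $X_{\pi^{*}s'}=\pi^{-1}(X'_{s'})$ is, by the first observation, an affine open neighbourhood of $x$ in $X$. Hence $\cal{L}$ is ample. For $(ii)\Rightarrow(i)$, fix $x\in|X'|=|X|$ and use ampleness of $\cal{L}$ to get $m$ and $s\in\Gamma(X,\cal{L}^{\otimes m})$ with $X_{s}$ an affine open neighbourhood of $x$. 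Applying Lemma \ref{lemm:perfectpush} to the structure morphism $X\to\Spec k$, its model $X'\to\Spec k$ and the line bundle $\cal{L}'^{\otimes m}$, we get
\[
\Gamma(X,\cal{L}^{\otimes m})=\colim_{j\ge 1}\Gamma(X',\cal{L}'^{\otimes mp^{j}}),
\]
with transition maps $a\mapsto a^{p}$. So $s$ is the image of some $s'\in\Gamma(X',\cal{L}'^{\otimes mp^{j}})$; as raising to the $p$-th power and the canonical identifications $\cal{L}^{\otimes mp^{j}}\cong\cal{L}^{\otimes m}$ on the perfect scheme $X$ do not change vanishing loci, we still have $X_{s}=\pi^{-1}(X'_{s'})$. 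By the first observation $X'_{s'}$ is affine, and it contains $x$. Hence $\cal{L}'$ is ample.

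The only substantive point is the implication ``$(U')^{\perf}$ affine $\Rightarrow U'$ affine'' in the first observation; the remainder is bookkeeping with the homeomorphism $|X|\cong|X'|$ and the colimit description of global sections from Lemma \ref{lemm:perfectpush}, so once that affineness criterion is granted the argument is formal, as in \cite[Lemma 3.6]{BS}.
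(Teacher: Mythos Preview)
Your proof is correct. The paper itself gives no argument here---it simply recalls the statement from \cite[Lemma~3.6]{BS}---so your expansion is appropriate, and the strategy (descend sections via the colimit description of Lemma~\ref{lemm:perfectpush}, then use that an integral surjection from an affine scheme onto a separated target forces the target to be affine) is exactly the kind of argument one expects. Two minor points worth tightening: the affineness criterion you invoke requires the target $X'_{s'}$ to be separated (or at least to have affine diagonal), which is implicit here since $X'$ is a finitely presented $k$-scheme but should perhaps be said; and the phrase ``canonical identifications $\cal{L}^{\otimes mp^{j}}\cong\cal{L}^{\otimes m}$ on the perfect scheme $X$'' is a little loose---there is no such identification of line bundles in general, but none is needed: the colimit map of Lemma~\ref{lemm:perfectpush} already lands in $\Gamma(X,\cal{L}^{\otimes m})$, and the non-vanishing locus of the image of $s'$ is $\pi^{-1}(X'_{s'})$ simply because pullback along $\pi$ and the $p$-th-power transition maps both preserve zero loci.
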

\begin{lemm}\label{lemm:perfample}
Let $f\colon X\to S$ be a morphism of perfectly finitely presented schemes over $k$, and $\cal{L}$ a line bundle on $X$.
Let $(f'\colon X'\to S',\cal{L}')$ be a finitely presented model of $(f\colon X\to S,\cal{L})$.
The following are equivalent:
\begin{enumerate}
\item $\cal{L}'$ is $f'$-ample.
\item For any $x'\in X'$, there exists an affine open neighborhood $V'$ of $f'(x')$ in $S'$ such that $\cal{L}'|_{f^{\prime -1}(V')}$ is ample in the sense of Lemma \ref{lemm:absample}.
\item For any $x\in X$, there exists an affine open neighborhood $V$ of $f(x)$ in $S$ such that $\cal{L}|_{f^{-1}(V)}$ is ample in the sense of Lemma \ref{lemm:absample}.
\end{enumerate}
We say that $\cal{L}$ is $f$-ample if the above conditions are satisfied.
\end{lemm}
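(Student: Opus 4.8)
The plan is to prove the chain (i) $\Leftrightarrow$ (ii) $\Leftrightarrow$ (iii) by two rather different means: the first link is classical scheme theory (relative ampleness is Zariski-local on the base), while the second is the ``perfection is a universal homeomorphism'' principle, which reduces matters to the absolute statement of Lemma \ref{lemm:absample}.

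For (i) $\Leftrightarrow$ (ii): since $X'$ and $S'$ are finitely presented over the field $k$, the morphism $f'$ is of finite type between noetherian schemes, so I would invoke the standard criterion (EGA II, 4.6.3--4.6.4, or the Stacks Project) that $\cal{L}'$ is $f'$-ample, in the sense of the global definition via $f'_*\cal{L}'$ and $\bb{P}(f'_*\cal{L}')$, if and only if $S'$ admits an affine open covering $\{V'_i\}$ with each $\cal{L}'|_{f'^{-1}(V'_i)}$ ample; and over an affine base relative and absolute ampleness coincide, so this is exactly ampleness in the sense of Lemma \ref{lemm:absample}. Granting this, (i) $\Rightarrow$ (ii) is immediate: for $x'\in X'$ take the member $V'_i$ of the cover containing $f'(x')$. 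For (ii) $\Rightarrow$ (i), the neighborhoods $V'_{x'}$ furnished by (ii), as $x'$ runs over $X'$, cover $\im f'$; together with affine opens contained in the open complement of $\overline{\im f'}$ --- over which $f'$ has empty preimage and the restriction of $\cal{L}'$ is vacuously ample --- they give an affine cover of $S'$ of the required type. (In all the situations where this lemma gets applied $f'$ is surjective, so this last manoeuvre is unnecessary.)

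For (ii) $\Leftrightarrow$ (iii): the canonical morphisms $X=(X')^{\perf}\to X'$ and $S=(S')^{\perf}\to S'$, being inverse limits of Frobenius morphisms, are integral, universally injective and surjective, hence universal homeomorphisms; in particular they identify the underlying spaces and the affine open subschemes on the two sides (the perfection of an affine scheme is affine). Thus a point $x\in X$ corresponds to $x'\in X'$ with $f(x)\leftrightarrow f'(x')$, and affine open neighborhoods $V$ of $f(x)$ in $S$ correspond bijectively to affine open neighborhoods $V'$ of $f'(x')$ in $S'$, with $f^{-1}(V)=(f'^{-1}(V'))^{\perf}$ and $\cal{L}|_{f^{-1}(V)}=(\cal{L}'|_{f'^{-1}(V')})^{\perf}$; hence $(f'^{-1}(V'),\cal{L}'|_{f'^{-1}(V')})$ is a finitely presented model of $(f^{-1}(V),\cal{L}|_{f^{-1}(V)})$, and Lemma \ref{lemm:absample} tells us one restriction is ample iff the other is. The equivalence of (ii) and (iii) follows, and since these conditions are in particular independent of the chosen model, the final clause of the statement (that one may \emph{define} $f$-ampleness this way) is justified.

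The only place where genuine care is needed --- the main obstacle, such as it is --- is pinning down the correct form of the EGA local-on-the-base criterion and the comparison between the global definition of $f'$-ample and that criterion; once that is in hand everything else (restrictions of models are models of restrictions, perfection is a homeomorphism on Zariski spaces) is routine and entirely parallel to the treatment of the absolute case in \cite{BS}.
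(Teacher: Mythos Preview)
Your proposal is correct and follows essentially the same route as the paper: the paper dispatches (i)$\Leftrightarrow$(ii) by citing \cite[Tag 01VU]{Sta} (the Zariski-local-on-the-base criterion for relative ampleness) and deduces (ii)$\Leftrightarrow$(iii) directly from Lemma~\ref{lemm:absample}, exactly as you do. Your write-up simply unpacks these two steps in more detail.
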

\begin{proof}
For the equivalence of (i) and (ii), see \cite[Tag 01VU]{Sta}.
The equivalence of (ii) and (iii) follows from Lemma \ref{lemm:absample}.
\end{proof}
\begin{lemm}\label{lemm:perfwb}
Let $f\colon X\to S$ be a morphism of perfectly finitely presented schemes over $k$, and $\cal{L}$ a line bundle on $X$.
Let $(f'\colon X'\to S',\cal{L}')$ be a finitely presented model of $(f\colon X\to S,\cal{L})$.
The following are equivalent:
\begin{enumerate}
\item\label{lemm:cond:wb} $\cal{L}'$ is $f'$-weakly big.
\item\label{lemm:cond:perfwb} There exists an $f$-ample sheaf $\cal{A}$ and a positive integer $m$ such that 
\[
f_*(\cal{L}^{\otimes m}\otimes \cal{A})\neq 0.
\]
\end{enumerate}
We say that $\cal{L}$ is $f$-weakly big if the above conditions are satisfied.
\end{lemm}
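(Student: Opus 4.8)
The plan is to mimic the proof of Lemma~\ref{lemm:perfsemiample}, using Lemma~\ref{lemm:perfectpush} to compare global sections on $X'$ with global sections on the perfect scheme $X$, and Lemma~\ref{lemm:perfample} to transport $f$-ampleness between $X$ and its finitely presented models. Two elementary observations are at the heart of the argument. First, a perfect $\bb{F}_p$-algebra is reduced (if $x^n=0$ then $x^{p^m}=0$ for $p^m\geq n$, and the Frobenius $\sigma$ is injective), so $X=X_{\rm{red}}$ and $S=S_{\rm{red}}$; moreover perfection annihilates nilpotents, so $(X')^{\perf}=(X'_{\rm{red}})^{\perf}=X$, and hence for any line bundle $\cal{M}'$ on $X'$ the triple $(f'_{\rm{red}}\colon X'_{\rm{red}}\to S'_{\rm{red}},\ \cal{M}'|_{X'_{\rm{red}}})$ is again a finitely presented model of $(f\colon X\to S,\ (\cal{M}')^{\perf})$. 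Second, on a reduced scheme the map $s\mapsto s^{p}$ on sections of any line bundle is injective, so if Lemma~\ref{lemm:perfectpush} is applied with this \emph{reduced} model, all transition maps in the colimit presentation $f_{*}\cal{N}=\colim_{j\geq 1}(f'_{\rm{red}})_{*}((\cal{N}')^{\otimes p^{j}}|_{X'_{\rm{red}}})$ become injective.

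For the implication (i)$\Rightarrow$(ii): assume $\cal{L}'$ is $f'$-weakly big, witnessed by an $f'$-ample line bundle $\cal{A}'$ on $X'$ and an integer $m\geq 1$ with $(f'_{\rm{red}})_{*}(((\cal{L}')^{\otimes m}\otimes\cal{A}')|_{X'_{\rm{red}}})\neq 0$. Set $\cal{A}:=(\cal{A}')^{\perf}$, which is $f$-ample by Lemma~\ref{lemm:perfample}. Applying Lemma~\ref{lemm:perfectpush} with the reduced model to $\cal{N}:=\cal{L}^{\otimes m}\otimes\cal{A}$, whose model is $\cal{N}':=(\cal{L}')^{\otimes m}\otimes\cal{A}'$, the hypothesis says precisely that the $j=0$ term of the colimit is nonzero; since all transition maps are injective, the colimit itself is nonzero, so $f_{*}(\cal{L}^{\otimes m}\otimes\cal{A})\neq 0$, which is (ii).

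For the implication (ii)$\Rightarrow$(i): assume there is an $f$-ample line bundle $\cal{A}$ on $X$ and $m\geq 1$ with $f_{*}(\cal{L}^{\otimes m}\otimes\cal{A})\neq 0$. Since $X=\lim_{j}(X')^{(j)}$ with affine transition maps, one has $\rm{Pic}(X)=\colim_{j}\rm{Pic}((X')^{(j)})$, so after replacing $\cal{A}$ by a Frobenius twist $\cal{A}^{\otimes p^{j_{0}}}$ and $m$ by $mp^{j_{0}}$---which alters neither the hypothesis, since $f_{*}(\cal{L}^{\otimes m}\otimes\cal{A})\neq 0\Leftrightarrow f_{*}((\cal{L}^{\otimes m}\otimes\cal{A})^{\otimes p^{j_{0}}})\neq 0$ on the perfect scheme $X$ (as $\sigma_{X}$ is an isomorphism), nor $f$-ampleness---we may assume $\cal{A}=(\cal{A}')^{\perf}$ for a line bundle $\cal{A}'$ on $X'$, which is then $f'$-ample by Lemma~\ref{lemm:perfample}. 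Now Lemma~\ref{lemm:perfectpush} applied to $\cal{N}:=\cal{L}^{\otimes m}\otimes\cal{A}$ with model $\cal{N}':=(\cal{L}')^{\otimes m}\otimes\cal{A}'$ forces some term $(f'_{\rm{red}})_{*}(((\cal{L}')^{\otimes mp^{j}}\otimes(\cal{A}')^{\otimes p^{j}})|_{X'_{\rm{red}}})$ to be nonzero, and since $(\cal{A}')^{\otimes p^{j}}$ is again $f'$-ample, this says exactly that $\cal{L}'$ is $f'$-weakly big. Finally, condition (ii) is manifestly independent of the chosen finitely presented model, so the equivalence (i)$\Leftrightarrow$(ii) also shows that (i) does not depend on the model, justifying the terminology in the statement.

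I expect the main obstacle to lie in the model bookkeeping: one must check that an $f$-ample line bundle on $X$ really descends, after a suitable Frobenius twist, to an $f'$-ample line bundle on the \emph{given} model $X'$---this is where $\rm{Pic}(X)=\colim_{j}\rm{Pic}((X')^{(j)})$ must be combined with Lemma~\ref{lemm:perfample}---and that one is entitled to feed the reduced model $X'_{\rm{red}}$, rather than $X'$ itself, into Lemma~\ref{lemm:perfectpush}: on $X'$ the maps $s\mapsto s^{p}$ need not be injective, and a priori the colimit computed there could fail to detect a section that appears only after perfection. Once reducedness is invoked at this precise point, the remaining verifications (perfection is monoidal, positive tensor powers of ample bundles are ample, and filtered colimits detect nonvanishing term by term) are routine.
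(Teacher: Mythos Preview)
Your proof is correct and follows essentially the same approach as the paper's: both directions hinge on Lemma~\ref{lemm:perfectpush} applied with the reduced model (so that the transition maps $s\mapsto s^{p}$ are injective) together with Lemma~\ref{lemm:perfample} to move ampleness between $X$ and $X'$. In fact you are more careful than the paper in the step (ii)$\Rightarrow$(i): the paper simply asserts that an $f$-ample $\cal{A}$ on $X$ admits an $f'$-ample model $\cal{A}'$ on the given $X'$, whereas you correctly note that this requires the identification $\rm{Pic}(X)=\colim_{j}\rm{Pic}((X')^{(j)})$ and a harmless Frobenius twist of $(\cal{A},m)$ before invoking Lemma~\ref{lemm:perfample}.
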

\begin{proof}
Assume (i).
Then there exists an $f'$-ample sheaf $\cal{A}'$ and $m\geq 1$ such that 
\[
(f^{\prime}|_{X'_{\rm{red}}})_*(((\cal{L}')^{\otimes m}\otimes \cal{A}')|_{X'_{\rm{red}}})\neq 0.
\]
Let $\cal{A}$ be a pullback of $\cal{A}'$ to $X$. 
By Lemma \ref{lemm:perfample}, $\cal{A}$ is $f$-ample.
Moreover, by Lemma \ref{lemm:perfectpush}, it holds that
\begin{align}\label{eqn:perfpushwb}
f_*(\cal{L}^{\otimes m}\otimes \cal{A})=\lim_{\substack{\longrightarrow\\ e\geq 1}} (f'|_{X'_{\rm{red}}})_*((\cal{L}')^{\otimes m}\otimes \cal{A}')^{\otimes p^e}|_{X'_{\rm{red}}}.
\end{align}
For any locally free $\cal{O}_{X'_{\rm{red}}}$-module $\cal{F}$, the homomorphism
\[
\cal{F}\to \cal{F}^{\otimes p},\ a\mapsto a^p
\]
is injective by the reducedness.
Thus
\[
(f'|_{X'_{\rm{red}}})_*\cal{F}\to (f'|_{X'_{\rm{red}}})_*\cal{F}^{\otimes p}
\]
is also injective.
Therefore, the transition homomorphisms in the right-hand side of (\ref{eqn:perfpushwb}) are injective.
In particular, there is an injection
\[
(f'|_{X'_{\rm{red}}})_*((\cal{L}')^{\otimes m}\otimes \cal{A}')|_{X'_{\rm{red}}} \to f_*(\cal{L}^{\otimes m}\otimes \cal{A}).
\]
We get $f_*(\cal{L}^{\otimes m}\otimes \cal{A})\neq 0$.

Conversely, assume (ii).
There exists an $f$-ample sheaf $\cal{A}$ and a positive integer $m$ such that 
\[
f_*(\cal{L}^{\otimes m}\otimes \cal{A})\neq 0.
\]
By Lemma \ref{lemm:perfample}, there exists an $f'$-ample sheaf $\cal{A}'$ whose pullback to $X$ is $\cal{A}$.
Then the equation (\ref{eqn:perfpushwb}) holds, and thus
\[
(f'|_{X'_{\rm{red}}})_*((\cal{L}')^{\otimes m}\otimes \cal{A}')^{\otimes p^e}|_{X'_{\rm{red}}}\neq 0
\]
for some $e\geq 1$.
Since $(\cal{A}')^{^{\otimes p^e}}$ is $f'$-ample, we get (i).
\end{proof}
\begin{lemm}\label{lemm:perfnef}
Let $f\colon X\to S$ be a morphism of perfectly finitely presented schemes over $k$, and $\cal{L}$ a line bundle on $X$.
Let $(f'\colon X'\to S',\cal{L}')$ be a finitely presented model of $(f\colon X\to S,\cal{L})$.
The following are equivalent:
\begin{enumerate}
\item $\cal{L}'$ is $f'$-nef.
\item For each closed point $\Spec k'\to S'$, the pullback $\cal{L}'|_{X'\times_{S'}\Spec k'}$ is nef.
\item For each closed point $\Spec k'\to S$, the pullback $\cal{L}|_{X\times_{S}\Spec k'}$ satisfies the following:\\
For any smooth projective curve $C'$ over $k'$ and any non-constant morphism $C^{\prime\perf}\to X\times_{S}\Spec k'$, the pullback $\cal{L}|_{C^{\prime\perf}}$ of $\cal{L}$ to $C^{\prime\perf}$ is ample (in the sense of Lemma \ref{lemm:absample}).
\end{enumerate}
We say that $\cal{L}$ is $f$-nef if the above conditions are satisfied.
\end{lemm}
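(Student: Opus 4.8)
The plan is to prove the two equivalences (i) $\Leftrightarrow$ (ii) and (ii) $\Leftrightarrow$ (iii) separately, using Kleiman's numerical curve criterion for nefness, generic flatness over the finite-type base, and the passage-to-models dictionary furnished by Lemma \ref{lemm:absample} together with the fact (already used in the proof of Lemma \ref{lemm:perfsemiample}) that a morphism between perfections descends to a morphism between finitely presented models after a Frobenius twist of the source. Throughout, $f$ and hence $f'$ are understood to be proper, as the notion of $f'$-nefness presupposes.

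For (i) $\Leftrightarrow$ (ii): a morphism $\Spec K \to S'$ with $K$ a field factors through $\Spec\kappa(s)$ for $s$ its image point, and $X' \times_{S'} \Spec K \cong X'_s \times_{\kappa(s)} K$. Since nefness of a line bundle on a proper scheme over a field is unaffected by extension of the base field (the base change stays nef, and nefness descends by invariance of Euler characteristics under flat base change), condition (i) is equivalent to: $\cal{L}'|_{X'_s}$ is nef for every point $s \in S'$. It remains to reduce "every $s$" to "every closed $s$", the nontrivial direction being $\Leftarrow$. Suppose $\cal{L}'|_{X'_\eta}$ is not nef, so there is an integral proper curve $C \subseteq X'_\eta$ with $\deg(\cal{L}'|_C) < 0$. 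Set $T = \overline{\{\eta\}}$ with reduced structure and let $D \subseteq X' \times_{S'} T$ be the closure of the generic point of $C$, so $D \to T$ is proper and integral with $D_\eta = C$. By generic flatness, $D \to T$ is flat over a dense open $U \subseteq T$ (which contains $\eta$), hence $u \mapsto \deg(\cal{L}'|_{D_u})$ is locally constant on $U$, thus constant on the irreducible set $U$, with value $\deg(\cal{L}'|_C) < 0$. As $S'$ and its closed subscheme $T$ are Jacobson, $U$ contains a point $u_0$ that is closed in $S'$, and then $\cal{L}'$ has negative degree on the one-dimensional proper subscheme $D_{u_0} \subseteq X'_{u_0}$, so $\cal{L}'|_{X'_{u_0}}$ is not nef — contradicting (ii).

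For (ii) $\Leftrightarrow$ (iii): a closed point $\Spec k' \to S$ is the same datum as a closed point $s$ of $S'$ (perfection is a homeomorphism, and the residue field at $s$ is finite over the perfect field $k$, hence already perfect). Writing $Y' := X' \times_{S'} \Spec k'$, a proper $k'$-scheme, we have $X \times_S \Spec k' = (Y')^{\perf}$, and "$\cal{L}'|_{X'_s}$ nef" $\Leftrightarrow$ "$\cal{L}'|_{Y'}$ nef" by the field-extension invariance above. By Kleiman's criterion the latter holds iff, for every smooth projective curve $C'$ over $k'$ and every non-constant morphism $g'\colon C' \to Y'$, the line bundle $g'^{*}(\cal{L}'|_{Y'})$ has the degree property required in (iii). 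Non-constant morphisms $C'^{\perf} \to (Y')^{\perf}$ are exactly the perfections of such $g'$ up to a Frobenius twist of $C'$ (which changes neither $C'$ abstractly nor the sign of any degree), and by Lemma \ref{lemm:absample} the relevant condition on $g'^{*}\cal{L}'$ over $C'$ is equivalent to the corresponding condition on $C'^{\perf}$ appearing in (iii). Hence (ii) $\Leftrightarrow$ (iii).

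The only step that is not essentially formal is the implication (ii) $\Rightarrow$ (i): one must control the intersection numbers $\cal{L}' \cdot C$ as $C$ degenerates in the generally non-flat family $X' \to S'$, which is precisely what the generic-flatness and specialization argument above accomplishes, using in an essential way that $S'$ is of finite type over a field, so that closed points are dense in every irreducible closed subset. Granting Kleiman's criterion and the model-theoretic dictionary, all remaining identifications are routine bookkeeping.
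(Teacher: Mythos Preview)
Your proof is correct and follows the same strategy as the paper. For (i)$\Leftrightarrow$(ii) the paper simply cites \cite[Lemma 2.6]{CT}, whereas you spell out the generic-flatness/Jacobson specialization argument; for (ii)$\Leftrightarrow$(iii) both you and the paper reduce to a single closed fibre and use the same model-theoretic dictionary (morphisms from $C'^{\perf}$ descend to $C'^{(m)}\to X'$ after a Frobenius twist) together with Lemma~\ref{lemm:absample}.
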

\begin{proof}
For the equivalence of (i) and (ii), see \cite[Lemma 2.6]{CT}.

For the equivalence of (ii) and (iii), we may assume $S=\Spec k$ since $|S|=|S'|$. 
Assume (ii) and take any smooth projective curve $C'$ over $k$ and any non-constant morphism $g\colon C^{\prime\perf}\to X$.
There exists a model of $g$ of the form
\[
C^{\prime(m)}\to X'
\]
for some $m\geq 0$.
By (ii), the line bundle $\cal{L}'|_{C^{\prime(m)}}$ is ample.
By construction, $\cal{L}|_{C^{\prime\perf}}$ is the pullback of $\cal{L}'|_{C^{\prime(m)}}$ to $C^{\prime\perf}$.
Thus $\cal{L}|_{C^{\prime\perf}}$ is ample by Lemma \ref{lemm:absample}.

Conversely, assume (iii) and take any smooth projective curve $C'$ over $k'$ and any non-constant morphism $g'\colon C'\to X'$.
Consider the perfection $g\colon C\to X$.
By (iii), the line bundle $\cal{L}|_{C^{\prime\perf}}$ is ample.
By construction, $\cal{L}|_{C^{\prime\perf}}$ is the pullback of $\cal{L}'|_{C^{\prime}}$ to $C^{\prime\perf}$.
Thus $\cal{L}'|_{C^{\prime}}$ is ample by Lemma \ref{lemm:absample}.
\end{proof}
\begin{defi}[Exceptional locus]
\begin{enumerate}
\item(\cite[\S 2.1.1]{CT}) Let $f'\colon X'\to S'$ be a morphism of finitely presented schemes over $k$, and $\cal{L}'$ an $f'$-nef line bundle on $X'$.
The exceptional locus $\bb{E}_{f'}(\cal{L}')$ is defined by the union of all the reduced closed subset $V'\subset X'$ such that $\cal{L}'|_{V'}$ is not $f'|_{V'}$-weakly big.
\item Let $f\colon X\to S$ be a morphism of perfectly finitely presented schemes over $k$, and $\cal{L}$ a line bundle on $X$.
Assume $\cal{L}$ is $f$-nef (in the sense of Lemma \ref{lemm:perfnef}).
The exceptional locus $\bb{E}_f(\cal{L})$ is defined by the union of all the perfect closed subset $V\subset X$ such that $\cal{L}|_{V}$ is not $f|_{V}$-weakly big (in the sense of Lemma \ref{lemm:perfwb}).
\end{enumerate}
\end{defi}
\begin{cor}\label{cor:perfexloc}
Let $f\colon X\to S$ be a morphism of perfectly finitely presented schemes over $k$, and $\cal{L}$ an $f$-nef line bundle on $X$.
Let $(f'\colon X'\to S',\cal{L}')$ be a finitely presented model of $(f\colon X\to S,\cal{L})$.
(The line bundle $\cal{L}'$ is $f'$-nef by Lemma \ref{lemm:perfnef}).
Then $\bb{E}_{f}(\cal{L})$ is perfect closed subset and it holds that
\[
\bb{E}_{f}(\cal{L})=(\bb{E}_{f'}(\cal{L}'))^{\perf}
\]
in $X=X^{\prime\perf}$.
\end{cor}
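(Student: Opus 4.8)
The plan is to reduce the equality $\bb{E}_f(\cal{L}) = (\bb{E}_{f'}(\cal{L}'))^{\perf}$ to the compatibility lemmas already established, namely Lemmas \ref{lemm:perfwb} and \ref{lemm:perfnef}, together with the general fact that perfect closed subsets of $X = X'^{\perf}$ are exactly the perfections of closed subsets of $X'$ (both sides having the same underlying topological space $|X| = |X'|$). First I would record that if $V' \subset X'$ is a reduced closed subscheme, then $V := V'^{\perf} \subset X$ is a perfect closed subscheme with $|V| = |V'|$, and conversely every perfect closed $V \subset X$ arises this way from the reduced closed subscheme $V'$ on the same underlying space; moreover $(f'|_{V'})$ admits $(f|_V)$ as its perfection and $\cal{L}'|_{V'}$ is a finitely presented model of $\cal{L}|_V$. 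This sets up a bijection between the two indexing families of subsets appearing in the two definitions of exceptional locus.

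Next I would invoke Lemma \ref{lemm:perfwb} applied to the morphism $f|_V \colon V \to S$ (equivalently $f|_V \colon V \to f(V)$, but the ample-and-nonvanishing condition is insensitive to shrinking the target since it is phrased via pushforward and $f$-ampleness, which are local on the base): $\cal{L}|_V$ is $f|_V$-weakly big if and only if $\cal{L}'|_{V'}$ is $f'|_{V'}$-weakly big. Hence $V'$ contributes to $\bb{E}_{f'}(\cal{L}')$ precisely when $V = V'^{\perf}$ contributes to $\bb{E}_f(\cal{L})$. Taking the union over all such $V'$ (resp. $V$), and using that perfection commutes with the (topological) union of closed subsets --- i.e. $\left(\bigcup_i V_i'\right)^{\perf} = \bigcup_i (V_i'^{\perf})$ as closed subsets of $X$, since the underlying space is unchanged and the scheme structure on the union of perfect closed subschemes is again perfect --- yields the desired identity $\bb{E}_f(\cal{L}) = (\bb{E}_{f'}(\cal{L}'))^{\perf}$. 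That $\bb{E}_f(\cal{L})$ is itself a perfect closed subset then follows because $\bb{E}_{f'}(\cal{L}')$ is closed in $X'$ (this is part of the cited results of \cite{CT}) and perfection of a closed subscheme is a perfect closed subscheme.

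One small point that needs care, and which I expect to be the main (though modest) obstacle, is the passage between ``$f|_V$-weakly big'' and ``$(f|_V)|_{\text{onto its image}}$-weakly big'': the definition of exceptional locus in \cite{CT} tests weak bigness of $\cal{L}'|_{V'}$ relative to $f'|_{V'} \colon V' \to S'$, and I must make sure the translation via Lemma \ref{lemm:perfwb} --- which is stated for a morphism of perfectly finitely presented schemes with a fixed finitely presented model --- applies to $V \to S$ with model $V' \to S'$. Since $V'$ and $S'$ are finitely presented over $k$ whenever $X'$ and $S'$ are, and $V'^{\perf} = V$, $S'^{\perf} = S$, this is exactly the hypothesis of Lemma \ref{lemm:perfwb}, so the translation is legitimate. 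The only remaining bookkeeping is that a finitely presented model of the pair $(f|_V \colon V \to S, \cal{L}|_V)$ can be taken to be $(f'|_{V'} \colon V' \to S', \cal{L}'|_{V'})$ with $V'$ the reduced closed subscheme of $X'$ on $|V|$; this is immediate. With these identifications in place, the corollary follows formally.
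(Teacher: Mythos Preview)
Your proposal is correct and follows the same approach as the paper, whose proof is the single line ``This follows from Lemma \ref{lemm:perfwb}.'' You have simply spelled out the details the paper leaves implicit: the bijection between reduced closed subsets of $X'$ and perfect closed subsets of $X$, the fact that $(f'|_{V'}\colon V'\to S',\ \cal{L}'|_{V'})$ is a model of $(f|_V\colon V\to S,\ \cal{L}|_V)$, and the appeal to \cite{CT} for closedness of $\bb{E}_{f'}(\cal{L}')$.
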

\begin{proof}
This follows from Lemma \ref{lemm:perfwb}.
\end{proof}
\begin{thm}[Perfect relative Keel's theorem]
Let $f\colon X\to S$ be a morphism of perfectly finitely presented schemes over $k$, and $\cal{L}$ an $f$-nef line bundle on $X$.
Then $\cal{L}$ is $f$-semi-ample if and only if $\cal{L}|_{\bb{E}_f(\cal{L})}$ is $f|_{\bb{E}_f(\cal{L})}$-semi-ample.
\end{thm}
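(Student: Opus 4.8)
The plan is to reduce the statement, essentially formally, to the relative version of Keel's theorem for honest finitely presented schemes over $k$, which is available in the literature (cf. \cite{CT}). The whole point of the preceding lemmas was to make such a reduction legitimate: $f$-semi-ampleness, $f$-nefness, $f$-weak bigness and the exceptional locus in the perfect setting were all defined by passing to a finitely presented model, in a way that does not depend on the model.

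First I would choose a finitely presented model $(f'\colon X'\to S',\cal{L}')$ of $(f\colon X\to S,\cal{L})$. By Lemma \ref{lemm:perfnef} the hypothesis that $\cal{L}$ is $f$-nef is equivalent to $\cal{L}'$ being $f'$-nef, so the structural results of \cite{CT} apply to $(f',\cal{L}')$: the exceptional locus $\bb{E}_{f'}(\cal{L}')$ is a (reduced) closed subscheme of $X'$, and the relative Keel theorem for finitely presented schemes over $k$ gives that $\cal{L}'$ is $f'$-semi-ample if and only if $\cal{L}'|_{\bb{E}_{f'}(\cal{L}')}$ is $(f'|_{\bb{E}_{f'}(\cal{L}')})$-semi-ample.

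It then remains to transport both sides of this equivalence back to the perfect world. For the left-hand side, Lemma \ref{lemm:perfsemiample} says that $\cal{L}$ is $f$-semi-ample if and only if $\cal{L}'$ is $f'$-semi-ample. For the right-hand side, Corollary \ref{cor:perfexloc} identifies $\bb{E}_f(\cal{L})$ with $(\bb{E}_{f'}(\cal{L}'))^{\perf}$ inside $X=X'^{\perf}$, so that
\[
\bigl(f'|_{\bb{E}_{f'}(\cal{L}')}\colon \bb{E}_{f'}(\cal{L}')\to S',\ \cal{L}'|_{\bb{E}_{f'}(\cal{L}')}\bigr)
\]
is a finitely presented model of $\bigl(f|_{\bb{E}_f(\cal{L})}\colon \bb{E}_f(\cal{L})\to S,\ \cal{L}|_{\bb{E}_f(\cal{L})}\bigr)$; one more application of Lemma \ref{lemm:perfsemiample} shows that $\cal{L}|_{\bb{E}_f(\cal{L})}$ is $(f|_{\bb{E}_f(\cal{L})})$-semi-ample if and only if $\cal{L}'|_{\bb{E}_{f'}(\cal{L}')}$ is $(f'|_{\bb{E}_{f'}(\cal{L}')})$-semi-ample. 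Chaining the three equivalences gives the theorem.

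The only genuinely delicate point — and the one I expect to be the main obstacle — is the compatibility used in the last paragraph: one must know both that the classical exceptional locus $\bb{E}_{f'}(\cal{L}')$ is actually closed in $X'$ (so that ``restriction to the exceptional locus'' makes sense as a morphism of finitely presented schemes) and that its formation commutes with perfection, so that restricting to $\bb{E}_f(\cal{L})$ really does correspond under models to restricting to $\bb{E}_{f'}(\cal{L}')$. Closedness is part of the structural input from \cite{CT} on nef line bundles, and the compatibility with perfection is precisely Corollary \ref{cor:perfexloc}, which rests on Lemma \ref{lemm:perfwb}. Once these are granted, no further geometric input beyond the finitely presented relative Keel theorem is needed; in particular the case where $\cal{L}$ is $f$-nef but not $f$-weakly big, where $\bb{E}_f(\cal{L})=X$ and the statement is tautological, is already subsumed.
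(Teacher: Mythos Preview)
Your proposal is correct and follows essentially the same approach as the paper: choose a finitely presented model, use Lemma~\ref{lemm:perfnef}, Lemma~\ref{lemm:perfsemiample}, and Corollary~\ref{cor:perfexloc} to transport nefness, semi-ampleness, and the exceptional locus between the perfect and finitely presented settings, and then invoke the classical relative Keel theorem from \cite{CT}. The paper's proof is a one-sentence version of exactly this reduction.
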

\begin{proof}
By choosing a model $(f'\colon X'\to S',\cal{L}')$ of $(f\colon X\to S,\cal{L})$ and using Lemma \ref{lemm:perfnef}, Lemma \ref{lemm:perfsemiample} and Corollary \ref{cor:perfexloc}, the theorem reduced to the (non-perfect) finitely presented case.
Then it follows from \cite[Proposition 2.20]{CT}.
\end{proof}
\subsection{Affine Grassmannain of $\GL_n$}
In this subsection, assume $G=\GL_n$.
Let $T$ be the subgroup of diagonal matrices, and $B$ the subgroup of upper triangular matrices.
Note that there is a canonical identification
\begin{align*}
\bb{X}_*(T)&=\bb{Z}^n\\
\bb{X}_*^+(T)&=\{(m_1,\ldots,m_n)\in \bb{Z}^n\mid m_1\geq \cdots\geq m_n\}.
\end{align*}

In this case, the affine Grassmannian $\Gr_{\GL_n,\Divtil}$ can be written as follows:
\[
\Gr_{\GL_n,\Divtil}(R)=\left\{(\xi_1,\xi_2,\cal{E})\relmiddle|
\begin{array}{l}
(\xi_1,\xi_2)\in \Divtil(R)\\
\text{$\cal{E}$ is a finite projective $B^+_{\xi_1,\xi_2}(R)$-submodule of $B_{\xi_1,\xi_2}(R)^n$}\\
\cal{E}[1/\xi_2]=B_{\xi_1,\xi_2}(R)^n
\end{array}
\right\}.
\]
In other words, the affine Grassmannian is a moduli space of $B^+_{\xi_1,\xi_2}(R)$-lattices in $B_{\xi_1,\xi_2}(R)^n$:
\begin{defi}
We call $\cal{E}$ a $B^+_{\xi_1,\xi_2}(R)$-lattice in $B_{\xi_1,\xi_2}(R)^n$ if $\cal{E}$ is  a finite projective $B^+_{\xi_1,\xi_2}(R)$-submodule of $B_{\xi_1,\xi_2}(R)^n$ such that $\cal{E}[1/\xi_2]=B_{\xi_1,\xi_2}(R)^n$.
\end{defi}
\begin{defi}
Let $k$ be a perfect field over $k_E$, and fix $(\xi_1,\xi_2)\in \Divtil(k)$.
Let $E_1,E_2$ be $B^+_{\xi_1,\xi_2}(k)$-lattices in $B_{\xi_1,\xi_2}(k)^n$.
Since $B^+_{\xi_1,\xi_2}(k)$ is a discrete valuation field by Lemma \ref{lemm:Bdvr}, there exists a basis $(e_1,\ldots,e_n)$ of $E_1$ and a basis $(f_1,\ldots,f_n)$ of $E_2$ such that
\[
e_i=\xi_2^{m_i}f_i
\]
for some $\mu=(m_1,\ldots,m_n)\in \bb{X}_*^+(T)$.
Moreover, $\mu$ does not depend on the choice of the basis.
This $\mu\in \bb{X}_*^+(T)$ is called {the relative position of $E_1$ to $E_2$}.
\end{defi}
\begin{defi}
Let $R$ be a perfect ring over $k_E$, and fix $(\xi_1,\xi_2)\in \Divtil(R)$.
Let $\cal{E}_1,\cal{E}_2$ be finite projective $B^+_{\xi_1,\xi_2}(R)$-lattices in $B_{\xi_1,\xi_2}(R)^n$.
We say that {the relative position of $\cal{E}_2$ to $\cal{E}_1$ is $\mu$ (resp. $\leq \mu$)} if the relative position of $\cal{E}_2\otimes_{B^+_{\xi_1,\xi_2}(R)}B^+_{\xi_1,\xi_2}(k(x))$ to $\cal{E}_1\otimes_{B^+_{\xi_1,\xi_2}(R)}B^+_{\xi_1,\xi_2}(k(x))$ is $\mu$ (resp. $\leq \mu$) for each point $x\in \Spec R$.
\end{defi}
For $R\in \Perf_{k_E}$ and $(\xi_1,\xi_2)\in \Divtil(R)$, put
\[
\cal{E}_0:=\cal{E}_{0,\GL_n,\xi_1,\xi_2}:=B^+_{\xi_1,\xi_2}(R)^n.
\]
By the definition of $\Gr_{\leq \mu}=\Gr_{\GL_n,\Divtil,\leq \mu}$ and $\Gr_{\mu}=\Gr_{\GL_n,\Divtil,\mu}$, one can check that
\begin{align*}
\Gr_{\leq \mu}(R)=\{&(\xi_1,\xi_2,\cal{E})\in \Gr_{\GL_n,\Divtil}(R)\mid \\
& \text{The relative position of $\cal{E}$ to $\cal{E}_0$ is $\leq \mu$}\},\\
\Gr_{\mu}(R)=\{&(\xi_1,\xi_2,\cal{E})\in \Gr_{\GL_n,\Divtil}(R)\mid \\
& \text{The relative position of $\cal{E}$ to $\cal{E}_0$ is $\mu$}\}.
\end{align*}
Note that there is a canonical isomorphism
\begin{align*}
\Gr_{\GL_n,\Divtil,\leq (m_1,\ldots,m_n)}&\overset{\sim}{\to} \Gr_{\GL_n,\Divtil,\leq (m_1+l,\ldots,m_n+l)},\\
\cal{E}&\mapsto \xi_2^l\cal{E}.
\end{align*}
Thus we only have to consider the case $m_n\geq 0$.
In this case, there is an inclusion between the lattices:
\begin{lemm}(cf. \cite[Lemma 1.5]{Zhumixed}) \label{lemm:condincl}
Let $R$ be a perfect ring over $k_E$, and fix $(\xi_1,\xi_2)\in \Divtil(R)$.
Let $\cal{E}_1,\cal{E}_2$ be $B^+_{\xi_1,\xi_2}(R)$-lattices in $B_{\xi_1,\xi_2}(R)^n$.
If the relative position of $\cal{E}_1$ to $\cal{E}_2$ is $\leq \mu$ for some $\mu=(m_1,\ldots,m_n)$ with $m_n\geq 0$, then there is an inclusion $\cal{E}_1\subset \cal{E}_2$.
\end{lemm}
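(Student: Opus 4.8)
The plan is to reduce the assertion to the case where $R$ is a perfect field, where it becomes the elementary divisor theorem over the discrete valuation ring $B^+_{\xi_1,\xi_2}(R)$ of Lemma~\ref{lemm:Bdvr}. Concretely, for a perfect field $C$ over $k_E$, a point $(\xi_1,\xi_2)\in\Divtil(C)$, and lattices $L_1,L_2$ in $B_{\xi_1,\xi_2}(C)^n$ whose relative position is some dominant $\mu'\le\mu$, the definition of relative position provides bases $(e_i)$ of $L_1$ and $(f_i)$ of $L_2$ with $e_i=\xi_2^{m'_i}f_i$. The point is that $\mu'\le\mu$ together with $m_n\ge 0$ forces $m'_i\ge m'_n\ge m_n\ge 0$ for all $i$ (one has $\sum_i m'_i=\sum_i m_i$ while $\sum_{i\le n-1}m'_i\le\sum_{i\le n-1}m_i$, hence $m'_n\ge m_n$), so each $e_i\in L_2$ and therefore $L_1\subseteq L_2$. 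This disposes of the field case, and in particular of the fibre of $(\cal{E}_1,\cal{E}_2)$ at every point $x\in\Spec R$.

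The substance is the descent from these fibres back to $R$. First I would record that for any ring map $R\to C$ with $C$ a perfect field, in particular for $C=k(x)$, the base change $\cal{E}_{i,C}:=\cal{E}_i\otimes_{B^+_{\xi_1,\xi_2}(R)}B^+_{\xi_1,\xi_2}(C)$ along the induced map $B^+_{\xi_1,\xi_2}(R)\to B^+_{\xi_1,\xi_2}(C)$ is again a lattice in $B_{\xi_1,\xi_2}(C)^n$ (it is finite projective of rank $n$, hence free over the DVR $B^+_{\xi_1,\xi_2}(C)$, and inverting $\xi_2$ recovers $B_{\xi_1,\xi_2}(C)^n$), and that this is compatible with inclusions, with reduction modulo $\xi_2$ (via $B^+_{\xi_1,\xi_2}(C)/\xi_2\cong C$, Lemma~\ref{lemm:BquotR}) and with further base change to $k(x)$. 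Next, choose $L\ge 0$ with $\xi_2^{L}\cal{E}_1\subseteq\cal{E}_2$; this exists because $\cal{E}_1$ is finitely generated and each generator lies in $\cal{E}_1[1/\xi_2]=B_{\xi_1,\xi_2}(R)^n=\cal{E}_2[1/\xi_2]$. Then $Q:=\xi_2^{-L}\cal{E}_2/\cal{E}_2$ carries the finite filtration $Q\supseteq\xi_2 Q\supseteq\cdots\supseteq\xi_2^{L}Q=0$ whose graded pieces $\xi_2^{j}Q/\xi_2^{j+1}Q$ are isomorphic to $\cal{E}_2/\xi_2\cal{E}_2$, i.e.\ are finite projective over $R\cong B^+_{\xi_1,\xi_2}(R)/\xi_2$ of rank $n$. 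Consider the $B^+_{\xi_1,\xi_2}(R)$-linear map $\phi\colon\cal{E}_1\to Q$ coming from $\cal{E}_1\subseteq\xi_2^{-L}\cal{E}_2$; the goal is $\phi=0$. Assuming inductively $\phi(\cal{E}_1)\subseteq\xi_2^{j}Q$, the induced map $\cal{E}_1/\xi_2\cal{E}_1\to\xi_2^{j}Q/\xi_2^{j+1}Q$ is a morphism of finite projective $R$-modules whose base change along each $R\to k(x)$ is the analogous map for the fibres; by the field case the fibral $\phi_{k(x)}$ is zero, so these base changes all vanish. Since $R$ is reduced (being perfect), a morphism of finite projective $R$-modules vanishing on every residue field is zero, so $\phi(\cal{E}_1)\subseteq\xi_2^{j+1}Q$; iterating $L$ times yields $\phi(\cal{E}_1)\subseteq\xi_2^{L}Q=0$, i.e.\ $\cal{E}_1\subseteq\cal{E}_2$.

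The hard part, and the only genuinely non-formal step, is this fibre-to-$R$ descent, for two intertwined reasons. First, $R\mapsto B^+_{\xi_1,\xi_2}(R)$ is not visibly flat, so one cannot simply tensor the desired inclusion with $B^+_{\xi_1,\xi_2}(k(x))$; instead injectivity must be controlled by hand, using that lattices over $B^+_{\xi_1,\xi_2}(k(x))$ are torsion free and that $B_{\xi_1,\xi_2}(R)^n$ is $\xi_2$-torsion free, the latter via the unique $\xi_2$-adic expansions of Lemma~\ref{lemm:xiadicexp}. Second, working modulo $\xi_2^{L}$ forces one through the non-reduced ring $B^+_{\xi_1,\xi_2}(R)/\xi_2^{L}$, and the $\xi_2$-adic filtration above is exactly the device that moves the vanishing question onto the reduced ring $R$, where the principle "vanishing on all fibres implies vanishing" applies. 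Modulo these points the argument runs parallel to \cite[Lemma~1.5]{Zhumixed}.
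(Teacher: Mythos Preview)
Your argument is correct, and the overall plan---settle the field case via elementary divisors over the DVR $B^+_{\xi_1,\xi_2}(k(x))$, then descend using that perfect rings are reduced---matches the paper. The descent mechanism, however, is genuinely different. The paper invokes the elementwise criterion that $a\in B_{\xi_1,\xi_2}(R)$ lies in $B^+_{\xi_1,\xi_2}(R)$ if and only if its image lies in $B^+_{\xi_1,\xi_2}(k(x))$ for every $x\in\Spec R$; this follows immediately from the unique $\xi_2$-adic expansion of Lemma~\ref{lemm:xiadicexp} together with reducedness. With that in hand one simply writes an element of $\cal{E}_1$ in a local basis of $\cal{E}_2$, observes that each coordinate lies in $B^+_{\xi_1,\xi_2}(k(x))$ by the field case, and concludes. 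Your route instead organizes the problem via the finite $\xi_2$-adic filtration on $Q=\xi_2^{-L}\cal{E}_2/\cal{E}_2$ and reduces to vanishing of maps between finite projective $R$-modules, where reducedness is applied at the level of matrix entries rather than $\xi_2$-adic coefficients. Your approach is coordinate-free on $\cal{E}_2$ and makes the role of the filtration explicit, at the cost of some bookkeeping (compatibility of $\bar\phi_j$ with base change, the existence of $L$); the paper's approach is shorter and closer to Zhu's original, trading the filtration for a single scalar criterion.
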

\begin{proof}
The proof is the same as the proof in \cite[Lemma 1.5]{Zhumixed}, using that $a\in B_{\xi_1,\xi_2}(R)$ is an element of $B^+_{\xi_1,\xi_2}(R)$ if and only if the image of $a$ in $B_{\xi_1,\xi_2}(k(x))$ is in $B^+_{\xi_1,\xi_2}(k(x))$ for every $x\in \Spec R$.
\end{proof}
If $\cal{E}_1\subset \cal{E}_2$, then the notion of the relative position of $\cal{E}_2$ to $\cal{E}_1$ can be rephrased as the notion for the quotient $\cal{E}_2/\cal{E}_1$.
\begin{defi}
Let $k$ be a perfect field over $k_E$, and fix $(\xi_1,\xi_2)\in \Divtil(k)$.
Let $Q$ be a finitely generated $\xi_2$-power torsion $B^+_{(\xi_1,\xi_2)}(k)$-module.
For $\mu=(m_1,\ldots,m_n)\in \bb{Z}^n$ satisfying $m_1\geq \cdots\geq m_n\geq 0$, we say that {$Q$ is of type $\mu$} if
\[
Q\cong \bigoplus_{i=1}^n B^+_{(\xi_1,\xi_2)}(k)/\xi_2^{m_i}.
\]
(If $Q$ can be generated by $n$ elements, such $\mu$ always exists by Lemma \ref{lemm:Bdvr}.)
\end{defi}
For a perfect scheme $X$ over $\Divtil$, we say that a sheaf $\cal{Q}$ on $X$ is a $\cal{B}^+_{\xi_1,\xi_2}(\cal{O}_X)$-module if for any open affine subset $\Spec R\subset X$, the set $\cal{Q}(R)$ has a structure of a $B^+_{\xi_1,\xi_2}(R)$-module compatibly for $R$, where $(\xi_1,\xi_2)$ is a pair corresponding to the composition $\Spec R\subset X\to \Divtil$.
\begin{defi}
Let $X$ be a perfect scheme over $\Divtil$.
Let $\cal{Q}$ be a finitely generated $\xi_2$-power torsion quasi-coherent $B^+_{\xi_1,\xi_2}(\cal{O}_X)$-module.
For $\mu=(m_1,\ldots,m_n)\in \bb{Z}^n$ satisfying $m_1\geq \cdots\geq m_n\geq 0$, we say that {$\cal{Q}$ is of type $\mu$ (resp. $\leq \mu$)} if
\[
\cal{Q}\otimes_{B^+_{\xi_1,\xi_2}(\cal{O}_X)} B^+_{\xi_1,\xi_2}(k(x))
\] 
is of type $\mu$ (resp. $\leq \mu$) for all $x\in X$.
\end{defi}
By Lemma \ref{lemm:condincl}, the following holds:
\begin{cor}
Let $\cal{E}_1,\cal{E}_2$ be $B^+_{\xi_1,\xi_2}(R)$-lattices in $B_{\xi_1,\xi_2}(R)^n$.
Let $\mu=(m_1,\ldots,m_n)\in \bb{Z}^n$ be a sequence satisfying $m_1\geq \cdots\geq m_n\geq 0$.
\begin{enumerate}
\item The relative position of $\cal{E}_2$ to $\cal{E}_1$ is $\leq \mu$ if and only if $\cal{E}_2\subset \cal{E}_1$ and $\cal{E}_1/\cal{E}_2$ is of type $\leq \mu$.
\item The relative position of $\cal{E}_2$ to $\cal{E}_1$ is $\mu$ if and only if $\cal{E}_2\subset \cal{E}_1$ and $\cal{E}_1/\cal{E}_2$ is of type $\mu$.
\end{enumerate}
\end{cor}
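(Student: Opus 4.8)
The plan is to verify both equivalences pointwise on $\Spec R$, where by Lemma \ref{lemm:Bdvr} the ring $B^+_{\xi_1,\xi_2}$ specializes to a discrete valuation ring with uniformizer $\xi_2$, and to read everything off the elementary divisor theorem over such a ring; the one genuinely global statement, the inclusion $\cal{E}_2\subset\cal{E}_1$, is supplied by Lemma \ref{lemm:condincl}. For the ``only if'' directions, suppose the relative position of $\cal{E}_2$ to $\cal{E}_1$ is $\leq\mu$ (resp.\ $\mu$), with $\mu=(m_1,\ldots,m_n)$ and $m_n\geq 0$. By Lemma \ref{lemm:condincl} we get $\cal{E}_2\subset\cal{E}_1$, so $\cal{E}_1/\cal{E}_2$ is a finitely generated $\xi_2$-power torsion quasi-coherent $B^+_{\xi_1,\xi_2}(\cal{O}_X)$-module on $X=\Spec R$, and being ``of type'' is checked pointwise. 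Fixing $x\in\Spec R$ and writing $B^+:=B^+_{\xi_1,\xi_2}(k(x))$, the definition of relative position provides a basis $(f_i)$ of $\cal{E}_1\otimes_{B^+_{\xi_1,\xi_2}(R)}B^+$ and a basis $(\xi_2^{m_i'}f_i)$ of $\cal{E}_2\otimes_{B^+_{\xi_1,\xi_2}(R)}B^+$ with $\mu'=(m_i')\leq\mu$ (resp.\ $\mu'=\mu$); note that the dominance condition forces $m_n'\geq m_n\geq 0$, so these exponents are nonnegative. Right exactness of the base change identifies $(\cal{E}_1/\cal{E}_2)\otimes_{B^+_{\xi_1,\xi_2}(R)}B^+$ with the cokernel $\bigoplus_{i=1}^n B^+/\xi_2^{m_i'}$ of $\cal{E}_2\otimes B^+\hookrightarrow\cal{E}_1\otimes B^+$, so $\cal{E}_1/\cal{E}_2$ is of type $\leq\mu$ (resp.\ $\mu$).

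Conversely, assume $\cal{E}_2\subset\cal{E}_1$ and $\cal{E}_1/\cal{E}_2$ is of type $\leq\mu$ (resp.\ $\mu$); again relative position is a pointwise notion, so fix $x$ and pass to $B^+$ as above. Since $\cal{E}_1/\cal{E}_2$ is $\xi_2$-power torsion, $\cal{E}_2\otimes B^+\to\cal{E}_1\otimes B^+$ is a map of free rank-$n$ $B^+$-modules that becomes an isomorphism after inverting $\xi_2$, hence is injective, with cokernel $(\cal{E}_1/\cal{E}_2)\otimes B^+\cong\bigoplus_i B^+/\xi_2^{m_i'}$ for some $\mu'\leq\mu$ (resp.\ $\mu'=\mu$). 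The elementary divisor theorem over $B^+$ then yields a basis $(f_i)$ of $\cal{E}_1\otimes B^+$ and integers $a_1\geq\cdots\geq a_n\geq 0$ with $(\xi_2^{a_i}f_i)$ a basis of $\cal{E}_2\otimes B^+$ and cokernel $\bigoplus_i B^+/\xi_2^{a_i}$; uniqueness of elementary divisors gives $(a_i)=\mu'$. By definition this says the relative position of $\cal{E}_2$ to $\cal{E}_1$ at $x$ equals $\mu'$, which is $\leq\mu$ (resp.\ $=\mu$), as required.

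The argument is essentially formal once Lemma \ref{lemm:condincl} and the structure theory over a discrete valuation ring are in hand, and I do not expect a real obstacle. The points deserving care are the base-change compatibilities used above: that the global inclusion $\cal{E}_2\subset\cal{E}_1$ with $\xi_2$-power torsion cokernel remains an inclusion of lattices after applying $-\otimes_{B^+_{\xi_1,\xi_2}(R)}B^+_{\xi_1,\xi_2}(k(x))$ (immediate, since a map of free rank-$n$ modules over a discrete valuation ring that is invertible after inverting the uniformizer is injective), and that forming $\cal{E}_1/\cal{E}_2$ commutes with this base change (right exactness of tensor). One must also unwind ``$\cal{Q}$ is of type $\leq\mu$'' as ``$\cal{Q}\otimes B^+_{\xi_1,\xi_2}(k(x))$ is of type $\mu'$ for some $\mu'\leq\mu$, for every $x$'', which matches verbatim the definition of relative position $\leq\mu$, and keep the orientation of ``relative position of $\cal{E}_2$ to $\cal{E}_1$'' fixed throughout (so that the basis relation reads $e_i=\xi_2^{m_i}f_i$ with $(e_i)$ a basis of $\cal{E}_2$).
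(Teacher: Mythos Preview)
Your proof is correct and follows the same approach the paper intends: the paper states the corollary as an immediate consequence of Lemma~\ref{lemm:condincl} together with the pointwise definitions of relative position and type, without spelling out any further argument. Your write-up simply makes explicit the unwinding of those definitions over each residue field via Lemma~\ref{lemm:Bdvr} and the elementary divisor theorem, which is exactly what is implicit in the paper.
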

Thus
\begin{align*}
\Gr_{\leq \mu}(R)=\{&(\xi_1,\xi_2,\cal{E})\in \Gr(R)\mid \text{$\cal{E}\subset \cal{E}_0$ and $\cal{E}_0/\cal{E}$ is of type $\leq \mu$}\},\\
\Gr_{\mu}(R)=\{&(\xi_1,\xi_2,\cal{E})\in \Gr(R)\mid \text{$\cal{E}\subset \cal{E}_0$ and $\cal{E}_0/\cal{E}$ is of type $\mu$}\}.
\end{align*}
Let $\mu=(m_1,\ldots,m_n)\in \bb{X}_*^+(T)$ be a dominant cocharacter satisfying $m_n\geq 0$.
For $1\leq i\leq n$, put 
\[
\omega_i=(\overbrace{1,\ldots,1}^{i},0,\ldots,0)\in \bb{X}_*^+(T).
\]
There is a decomposition
\begin{align}\label{eqn:minudecomp}
\mu=\mu_1+\cdots+\mu_{m_1}
\end{align}
such that $\mu_j= \omega_{n_{\mu}(j)}$ for some $n_{\mu}(j)$.
For simplicity, we assume that if $j<j'$, then $n_{\mu}(j)\leq n_{\mu}(j')$.
Under this assumption, the above decomposition is unique.
\begin{defi}\label{defi:DemGr}
{The Demazure resolution} $\wt{\Gr}_{\mu}=\wt{\Gr}_{\GL_n,\Divtil,\leq \mu}$ of $\Gr_{\GL_n,\Divtil,\leq \mu}$ is the functor on $\rm{PerfAffSch}^{\rm{op}}_{/\Divtil}$ defined by
\begin{align*}
\Spec R\mapsto \{&(\cal{E}_{m_1}\subset \cdots \subset \cal{E}_1\subset \cal{E}_0)\mid \\
& \left.\begin{array}{l}\text{$\cal{E}_j$'s are $B^+_{\xi_1,\xi_2}(R)$-lattices in $B_{\xi_1,\xi_2}(R)^n$}\\
\text{The relative position of $\cal{E}_j$ to $\cal{E}_{j-1}$ is $\mu_j$}
\end{array}
\right\}.
\end{align*}
\end{defi}
There is a map
\begin{align*}
    \psi\colon \wt{\Gr}_{G,\Divtil, \mu}&\to \Gr_{G,\Divtil},\\(\cal{E}_{m_1}\subset \cdots\subset \cal{E}_0)&\mapsto (\cal{E}_{m_1}\subset \cal{E}_0).
\end{align*}
By the same argument as the proof of \cite[Remark 8.5]{BS}, one can show the following theorem:
\begin{thm}\label{thm:psirat}
The map $\psi\colon \wt{\Gr}_{\mu}\to \Gr_{\leq \mu}$ satisfies 
\[
R\psi_*\cal{O}=\cal{O},
\]
in particular, surjective, and 
\[
\psi|_{\psi^{-1}(\Gr_{\mu})}\colon \psi^{-1}(\Gr_{\mu})\to \Gr_{\mu}
\]
is an isomorphism.
\end{thm}
\begin{proof}
The proof is the same as \cite{BS}.

We can define a notion of a Demazure scheme in \cite[Definition 7.10]{BS} for a quasi-coherent $B^+_{\xi_1,\xi_2}(\cal{O}_X)$-module instead of a quasi-coherent $W_{\cal{O}_E}(\cal{O}_X)$-module.
Then we can write $\wt{\Gr}_{\mu}$ as a Demazure scheme and we can show the analogous result to \cite[Proposition 7.11]{BS} and \cite[Lemma 7.13]{BS}, which implies the theorem.
\end{proof}
%
We still write $\Gr_{\mu}$ for $\psi^{-1}(\Gr_{\mu})$.
We can show that $\wt{\Gr}_{\mu}$ is a successive perfect Grassmannian bundle over $\Divtil$:
\begin{prop}(cf.\cite[Proposition 8.6]{BS}) \label{prop:Demsmproj}
The functor $\wt{\Gr}_{\mu}$ is representable by the perfection of a smooth projective scheme over $\Divtil\cong \GL_{2,k_E}^{\perf}$.
\end{prop}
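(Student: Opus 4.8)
The plan is to realize $\wt{\Gr}_{\mu}$ as an iterated perfect Grassmannian bundle over $\Divtil$, following the argument of \cite[Proposition 8.6]{BS}. Recall the decomposition $\mu=\mu_1+\cdots+\mu_{m_1}$ with $\mu_j=\omega_{n_{\mu}(j)}$ from (\ref{eqn:minudecomp}), and that in Definition \ref{defi:DemGr} the outer lattice is the standard one $\cal{E}_0=B^+_{\xi_1,\xi_2}(\cal{O})^n$, whose reduction modulo $\xi_2$ is $\cal{O}^n$ by Lemma \ref{lemm:BquotR}.

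First I would analyze a single link of the chain. Let $X$ be a perfect scheme over $\Divtil$ and $\cal{E}'$ a $B^+_{\xi_1,\xi_2}(\cal{O}_X)$-lattice in $B_{\xi_1,\xi_2}(\cal{O}_X)^n$ with $\cal{E}'/\xi_2\cal{E}'$ locally free of rank $n$ over $\cal{O}_X=\cal{B}^+_{\xi_1,\xi_2}(\cal{O}_X)/\xi_2$. The claim is that the functor on perfect $X$-schemes sending $T$ to the set of lattices $\cal{E}''\subset \cal{E}'|_T$ of relative position $\omega_i$ to $\cal{E}'|_T$ is representable by the perfect Grassmannian bundle $\rm{Quot}_X(\cal{E}'/\xi_2\cal{E}',i)$ of rank-$i$ locally free quotients. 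Indeed, by the Corollary following Lemma \ref{lemm:condincl}, such an $\cal{E}''$ satisfies $\xi_2\cal{E}'\subset \cal{E}''\subset \cal{E}'$ with $\cal{E}'/\cal{E}''\cong \cal{O}_X^i$, hence is the kernel of a surjection $\cal{E}'\twoheadrightarrow \cal{E}'/\xi_2\cal{E}'\twoheadrightarrow \cal{G}$ onto a rank-$i$ locally free quotient, and conversely any such $\cal{G}$ yields an $\cal{E}''$ of relative position $\omega_i$. I would then verify that $\cal{E}''$ is again a $B^+_{\xi_1,\xi_2}$-lattice which is finite projective of rank $n$, by Lemma \ref{lemm:projdim1proj} together with the localization argument in the proof of Lemma \ref{lemm:pifp} (nothing changes after inverting $\xi_2$), so that $\cal{E}''/\xi_2\cal{E}''$ is again locally free of rank $n$ over $\cal{O}_X$ and the step can be iterated.

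Given this, I would set $X_0=\Divtil$, $\cal{E}^{(0)}=\cal{E}_0$, and define inductively $X_j=\rm{Quot}_{X_{j-1}}(\cal{E}^{(j-1)}/\xi_2\cal{E}^{(j-1)},n_{\mu}(j))$ with $\cal{E}^{(j)}\subset \cal{E}^{(j-1)}|_{X_j}$ the kernel of the universal quotient. Unwinding Definition \ref{defi:DemGr}, the universal chain $(\cal{E}^{(m_1)}\subset \cdots\subset \cal{E}^{(0)})$ on $X_{m_1}$ gives a natural isomorphism $X_{m_1}\cong \wt{\Gr}_{\mu}$ over $\Divtil$ (consistent with the identification $\rm{Dem}_{\Gr_{\leq\mu},\mu}(\cal{Q})\cong \wt{\Gr}_{\mu}$ of Corollary \ref{cor:psirat}). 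Thus $\wt{\Gr}_{\mu}=X_{m_1}\to X_{m_1-1}\to\cdots\to X_0=\Divtil$ is a composition of perfect Grassmannian bundles. To conclude, I would pass to finitely presented models: $\Divtil\cong \GL_{2,k_E}^{\perf}$ has the model $\GL_{2,k_E}$, and a perfect Grassmannian bundle is the perfection of a smooth projective morphism, so one obtains compatible models $X'_j\to X'_{j-1}$ that are smooth and projective with $(X'_j)^{\perf}=X_j$; since $\GL_{2,k_E}$ is affine, the composite $X'_{m_1}\to \GL_{2,k_E}$ is again smooth and projective, and its perfection is $\wt{\Gr}_{\mu}$ over $\Divtil$.

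The hard part is the single-link claim in the second paragraph: that choosing a sublattice $\cal{E}''\subset \cal{E}'$ of relative position $\omega_i$ is the same as choosing a rank-$i$ locally free quotient of $\cal{E}'/\xi_2\cal{E}'$, and, crucially, that the resulting kernel lattice stays finite projective of rank $n$ so the induction closes up; this rests on Lemma \ref{lemm:Bdvr}, Lemma \ref{lemm:condincl} and Lemma \ref{lemm:projdim1proj}. Everything afterwards is the formal descent to finitely presented models already developed in Section \ref{scn:indsch}, together with stability of smoothness, and (over the affine base $\GL_{2,k_E}$) of projectivity, under composition.
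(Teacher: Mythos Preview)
Your proposal is correct and follows essentially the same approach as the paper: both realize $\wt{\Gr}_{\mu}$ as the tower $X_{m_1}\to\cdots\to X_0=\Divtil$ of perfect Grassmannian bundles $X_{j}=\rm{Quot}_{X_{j-1}}(\cal{E}^{(j-1)}/\xi_2,\,n_{\mu}(j))$, using Lemma~\ref{lemm:projdim1proj} to keep the kernel lattice finite projective so the induction closes. Your write-up simply spells out the single-link identification and the descent to finitely presented models more explicitly than the paper does.
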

\begin{proof}
The proof is the same as \cite[Proposition 8.6]{BS}.
\end{proof}
\begin{cor}\label{cor:Grclosed}
The subfunctor
\[
\Gr_{\leq \mu}\subset \Gr
\]
is a closed subfunctor.
Moreover, the subfunctor
\[
\Gr_{\mu}\subset \Gr_{\leq \mu}
\]
is an open subfunctor.
\end{cor}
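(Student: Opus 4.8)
The plan is to deduce both assertions, following \cite[\S 8--9]{BS}, from the properness of the Demazure scheme together with Lemma~\ref{lemm:Demrat}. First one reduces to $G=\GL_n$: a closed embedding $G\hookrightarrow\GL_n$ induces a closed embedding $\Gr_{G,\Divtil}\hookrightarrow\Gr_{\GL_n,\Divtil}$ compatible with the stratifications by Schubert cells (cf.\ \cite{BS}), and in any case it is only $\GL_n$ that is needed below. The key input, extracted as in \cite{BS} from Lemma~\ref{lemm:pifp}, Lemma~\ref{lemm:Demrat} and Lemma~\ref{lemm:projdim1proj}, is the following: for a perfect scheme $X$ over $\Divtil$ and a finitely generated $\xi_2$-power torsion quasi-coherent $\cal{B}^+_{\xi_1,\xi_2}(\cal{O}_X)$-module $\cal{Q}$ of projective dimension $1$, the locus $X_{\leq\mu}\subset X$ on which $\cal{Q}$ is of type $\leq\mu$ is closed --- being the image of the proper morphism $\rm{Dem}_{\mu}(\cal{Q})\to X$ --- and, endowed with the scheme structure of the scheme-theoretic image of $\rm{Dem}_{\mu}(\cal{Q})$, it represents the subfunctor of $X$ on which $\cal{Q}$ has type $\leq\mu$; here one uses the computation $R\Gamma(\rm{Dem}_{\mu}(\cal{Q})\times_X\ol{x},\cal{O})=k(\ol{x})$ of Lemma~\ref{lemm:Demrat}, together with cohomology and base change, to see that the formation of this scheme-theoretic image commutes with base change in $X$.

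For the closedness of $\Gr_{\leq\mu}\subset\Gr$, take a test object $\Spec R\to\Gr$, i.e.\ a triple $(\xi_1,\xi_2,\cal{E})$ with $\cal{E}$ a $B^+_{\xi_1,\xi_2}(R)$-lattice in $B_{\xi_1,\xi_2}(R)^n$. Since $\cal{E}$ is finitely generated and $B_{\xi_1,\xi_2}(R)=B^+_{\xi_1,\xi_2}(R)[\xi_2^{-1}]$, there is an integer $l\geq 0$ with $\cal{E}\subseteq\xi_2^{-l}\cal{E}_0$, and enlarging $l$ we may also assume $\mu+l\omega_n$ has nonnegative entries. Applying the automorphism $\cal{E}\mapsto\xi_2^l\cal{E}$ of $\Gr$, which restricts to the isomorphism $\Gr_{\leq\mu}\cong\Gr_{\leq\mu+l\omega_n}$ noted above, we reduce to the case $\cal{E}\subseteq\cal{E}_0$ with $\mu$ dominant and nonnegative. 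Then $\cal{Q}:=\cal{E}_0/\cal{E}$ is finitely generated, $\xi_2$-power torsion, and of projective dimension $1$ (it is the cokernel of an inclusion of finite projective $B^+_{\xi_1,\xi_2}(R)$-modules; cf.\ Lemma~\ref{lemm:projdim1proj}), and by the displayed description of $\Gr_{\leq\mu}$ in terms of $\cal{E}_0/\cal{E}$, the relative position of $\cal{E}$ to $\cal{E}_0$ is $\leq\mu$ at a geometric point precisely when $\cal{Q}$ is of type $\leq\mu$ there. By the key input, $\Gr_{\leq\mu}\times_{\Gr}\Spec R$ is then represented by the closed subscheme $(\Spec R)_{\leq\mu}$, so $\Gr_{\leq\mu}\subset\Gr$ is a closed subfunctor. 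Combined with the representability of each $\Gr_{\leq\mu}$ by a perfectly proper perfect scheme over $\Divtil$, which comes from Proposition~\ref{prop:Demsmproj} and Corollary~\ref{cor:psirat} as in \cite[\S 8]{BS}, this exhibits $\Gr$ as $\colim_{\mu}\Gr_{\leq\mu}$ along closed immersions, hence as an ind-perfectly-proper ind-scheme over $\Divtil$.

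For the openness of $\Gr_{\mu}\subset\Gr_{\leq\mu}$, note that there are only finitely many $\mu'\in\bb{X}_*^+(T)$ with $\mu'\leq\mu$. Given a test object $\Spec R\to\Gr_{\leq\mu}$, each $\Gr_{\leq\mu'}\times_{\Gr}\Spec R$ is a closed subscheme of $\Spec R$ by the previous paragraph, so $\bigcup_{\mu'<\mu}\bigl(\Gr_{\leq\mu'}\times_{\Gr}\Spec R\bigr)$ is a closed subset of $\Spec R$; its open complement represents $\Gr_{\mu}\times_{\Gr_{\leq\mu}}\Spec R$, since a map $\Spec R'\to\Spec R$ factors through this complement if and only if every geometric point of $\Spec R'$ has relative position not $\leq\mu'$ for any $\mu'<\mu$, i.e.\ equal to $\mu$ (it is already $\leq\mu$). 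Hence $\Gr_{\mu}\subset\Gr_{\leq\mu}$ is an open subfunctor.

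The main obstacle will be the second half of the key input: checking that the scheme-theoretic image of $\rm{Dem}_{\mu}(\cal{Q})\to X$ carries the \emph{correct} scheme structure on $X_{\leq\mu}$, i.e.\ that it represents the type-$\leq\mu$ subfunctor and that its formation commutes with base change. This is exactly where the cohomological computation of Lemma~\ref{lemm:Demrat} and a cohomology-and-base-change argument are essential, and where one must take some care transporting the argument of \cite{BS} to the family $\Gr\to\Divtil$ interpolating between the equal- and mixed-characteristic affine Grassmannians, the local rings $B^+_{\xi_1,\xi_2}(k(x))$ being discrete valuation rings of possibly mixed type. By contrast, the reduction to $\GL_n$ and the verification that $\cal{E}_0/\cal{E}$ has projective dimension $1$ (via Lemma~\ref{lemm:projdim1proj}) are routine.
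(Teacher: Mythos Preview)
Your argument is correct and follows the same route as the paper: both deduce closedness of $\Gr_{\leq\mu}$ from the fact that it is the image of the proper Demazure resolution (Corollary~\ref{cor:psirat} together with Proposition~\ref{prop:Demsmproj}), and then obtain openness of $\Gr_{\mu}$ as the complement in $\Gr_{\leq\mu}$ of the finitely many $\Gr_{\leq\nu}$ with $\nu<\mu$. The paper's proof is two lines; yours unwinds it at the level of test objects, which is appropriate since $\Gr$ is not yet known to be an ind-scheme at this point.

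One correction: the ``main obstacle'' you flag is not actually an obstacle here. You are working in the category of perfect schemes, where every object is reduced, so a closed sub\emph{scheme} is the same data as a closed sub\emph{set}, and a map $\Spec R'\to\Spec R$ with $R'$ perfect factors through a closed subset $Z$ (with its reduced structure) if and only if its set-theoretic image lies in $Z$. Since the type-$\leq\mu$ condition is defined pointwise on geometric points, the reduced closed subset $(\Spec R)_{\leq\mu}$ automatically represents the subfunctor $\Gr_{\leq\mu}\times_{\Gr}\Spec R$ the moment you know it is closed---and for that only the \emph{set-theoretic} image clause of Lemma~\ref{lemm:Demrat} is needed, not the cohomological computation or any cohomology-and-base-change argument. (The cohomological statement in Lemma~\ref{lemm:Demrat} is used later, in the Stein-factorization step of Theorem~\ref{thm:Grrep}.) Also, the reduction to $\GL_n$ is superfluous: at this point the corollary is stated only for $G=\GL_n$; general $G$ is treated afterwards in \S3.4 via a closed embedding into $\GL_n$.
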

\begin{proof}
By Theorem \ref{thm:psirat}, the Schubert variety $\Gr_{\leq \mu}$ is an image of the proper scheme $\wt{\Gr}_{\mu}$.
This implies the first part.
The second part follows from the first part and that
\[
\Gr_{\mu}=\Gr_{\leq \mu}\setminus \bigcup_{\nu<\mu} \Gr_{\leq \mu}.
\]
\end{proof}
\begin{defi}
For $1\leq j\leq m_1$, let $\cal{Q}_j$ be the universal bundle on $\wt{\Gr}_{\mu}$ corresponding to $\cal{E}_{j-1}/\cal{E}_{j}$, i.e. 
\[
\cal{Q}_j|_{\Spec R}=\cal{E}_{j-1}/\cal{E}_{j}
\]
for any $\Spec R\to \wt{\Gr}_{\mu}$ corresponding to $(\cal{E}_{m_1}\subset \cdots \subset \cal{E}_1\subset \cal{E}_0)$, noting that $\cal{E}_{j-1}/\cal{E}_{j}$ is a finite projective $R$-module by the same argument as \cite[Lemma 7.8]{BS}.

The line bundle $\wt{\cal{L}}_{\mu}$ on $\wt{\Gr}_{\mu}$ is defined as 
\[
\wt{\cal{L}}_{\mu}=\bigotimes_{j=1}^{m_1}\det(\cal{Q}_j).
\]
\end{defi}
We can show that the line bundle $\wt{\cal{L}}_{\mu}$ on $\wt{\Gr}_{\mu}$ descends to a line bundle $\cal{L}_{\mu}$ on $\Gr_{\leq \mu}$:
\begin{thm}(cf.\cite[Theorem 8.8]{BS})\label{thm:Lexists}
There is a unique line bundle $\cal{L}_{\mu}$ on $\Gr_{\leq \mu}$ such that the pullback $\psi^*\cal{L}_{\mu}$ is $\wt{\cal{L}}_{\mu}$.

For $\nu \leq \mu$, the pullback of $\cal{L}_{\mu}$ to $\Gr_{\leq \nu}$ is $\cal{L}_{\nu}$, compatible in $\nu$.
\end{thm}
\begin{proof}
The proof is the same as \cite[Theorem 8.8]{BS}.
%
%
\end{proof}
Now we can show the projectivity of the affine Grassmannian. 
\begin{thm}(cf.\cite[Theorem 8.3]{BS}) \label{thm:Grrep}
The functor $\Gr_{\GL_n,\Divtil,\leq \mu}$ is representable by a perfectly proper perfectly finitely presented scheme over $\Divtil$, and has a natural relatively ample line bundle $\cal{L}_{\mu}$ over $\Divtil$.
\end{thm}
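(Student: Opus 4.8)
The strategy mirrors the proof of \cite[Theorem 8.3]{BS}, reducing everything to the Demazure resolution $\psi\colon \wt{\Gr}_{\mu}\to \Gr_{\leq\mu}$ and the perfect relative Keel's theorem. First I would note that by Corollary \ref{cor:psirat} the map $\psi$ is surjective with $R\psi_*\cal{O}=\cal{O}$, and by Proposition \ref{prop:Demsmproj} the source $\wt{\Gr}_{\mu}$ is the perfection of a smooth projective scheme over $\Divtil$, hence perfectly proper and perfectly finitely presented over $\Divtil$. Since $\psi$ is proper and surjective, it follows formally (as in \cite[\S 8]{BS}) that $\Gr_{\leq\mu}$ is a qcqs perfect algebraic space; the content is to promote this to a scheme and produce the relatively ample bundle.

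For the ampleness, I would apply the perfect relative Keel's theorem to the line bundle $\cal{L}_{\mu}$ on $\Gr_{\leq\mu}$, whose pullback under $\psi$ is $\wt{\cal{L}}_{\mu}$ (Corollary \ref{cor:Lexists}). By Lemma \ref{lemm:Lnef}, $\cal{L}_{\mu}$ is $f$-nef over $\Divtil$, so Keel's theorem reduces $f$-semi-ampleness of $\cal{L}_{\mu}$ to $f$-semi-ampleness of its restriction to the exceptional locus $\bb{E}_{\Divtil}(\cal{L}_{\mu})$. By Lemma \ref{lemm:Lexp} (and Corollary \ref{cor:perfexloc}, transporting between $\wt{\cal{L}}$ and $\cal{L}_{\mu}$ via $R\psi_*\cal{O}=\cal{O}$), this exceptional locus is contained in the boundary $\Gr_{\leq\mu}\setminus\Gr_{\mu}$, which is a union of smaller Schubert varieties $\Gr_{\leq\nu}$ with $\nu<\mu$. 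So I would run an induction on $\mu$ (with respect to the dominance order, using that $\mu\leq (N,0,\dots,0)$ bounds the length of descending chains): by the inductive hypothesis each $\cal{L}_{\nu}=\cal{L}_{\mu}|_{\Gr_{\leq\nu}}$ is relatively ample over $\Divtil$, hence in particular relatively semi-ample, and therefore $\cal{L}_{\mu}|_{\bb{E}_{\Divtil}(\cal{L}_{\mu})}$ is relatively semi-ample. Keel's theorem then gives that $\cal{L}_{\mu}$ is $f$-semi-ample, i.e. some power $\cal{L}_{\mu}^{\otimes m}$ is $f$-free, defining a map $\Gr_{\leq\mu}\to \bb{P}(f_*\cal{L}_{\mu}^{\otimes m})$ over $\Divtil$.

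To upgrade semi-ampleness to relative ampleness and simultaneously get representability by a scheme, I would check that this morphism to projective space is a closed immersion fiberwise: on each geometric fiber over $\Divtil$ — which by Lemma \ref{lemm:Bdvr} is an equal-characteristic or Witt-vector affine Grassmannian Schubert variety — the bundle $\cal{L}_{\mu}$ is ample by the classical results of \cite{BS}, \cite{MV}, so the map separates points and tangent directions there. Combining fiberwise ampleness with the properness of $\Gr_{\leq\mu}\to\Divtil$ and $f$-freeness of a power, a standard descent/spreading-out argument (as in \cite{BS}, using the finitely presented models of Section \ref{scn:indsch}) shows the map is a closed immersion into a projective bundle over $\Divtil$, so $\Gr_{\leq\mu}$ is representable by a perfectly proper perfectly finitely presented scheme over $\Divtil$ and $\cal{L}_{\mu}$ is relatively ample.

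\textbf{Main obstacle.} The delicate point is the passage from fiberwise ampleness to relative ampleness over the two-dimensional base $\Divtil\cong\GL_{2,k_E}^{\perf}$: one must ensure the constructions are uniform in the family, in particular that the power $m$ with $\cal{L}_{\mu}^{\otimes m}$ being $f$-free can be chosen globally and that $f_*\cal{L}_{\mu}^{\otimes m}$ is a vector bundle whose formation commutes with base change, so that the fiberwise closed-immersion criterion applies. This is exactly where the perfect relative Keel machinery of Section \ref{scn:indsch} (Lemmas \ref{lemm:perfsemiample}–\ref{lemm:perfnef}, Corollary \ref{cor:perfexloc}, and the perfect relative Keel theorem) is needed, and I would lean on the fact that $\Divtil$ is smooth and, after choosing a finitely presented model, reduce the cohomological base-change statements to the classical Noetherian case handled in \cite{BS} and \cite{CT}.
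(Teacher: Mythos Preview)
Your overall strategy is close to the paper's, but there is a genuine circularity in how you deploy Keel's theorem. You apply the perfect relative Keel theorem directly to $\cal{L}_{\mu}$ on $\Gr_{\leq\mu}$, but the entire Keel machinery of \S\ref{scn:indsch} (nef, weakly big, exceptional locus, semi-ample) is set up for morphisms between \emph{perfectly finitely presented schemes}, and at this stage $\Gr_{\leq\mu}$ is only known to be a v-sheaf (or, with some extra work, an algebraic space). Likewise, Lemma~\ref{lemm:Lexp} controls the exceptional locus of $\wt{\cal{L}}_{\mu}$ on the scheme $\wt{\Gr}_{\mu}$, not of $\cal{L}_{\mu}$ on $\Gr_{\leq\mu}$; your parenthetical ``transporting via $R\psi_*\cal{O}=\cal{O}$'' does not give you a definition of $\bb{E}_{\Divtil}(\cal{L}_{\mu})$ to transport to. The paper avoids this by running Keel entirely on $\wt{\Gr}_{\mu}$, which \emph{is} a perfectly finitely presented scheme by Proposition~\ref{prop:Demsmproj}: one shows $\wt{\cal{L}}_{\mu}$ is relatively semi-ample, then takes its Stein factorization $\phi\colon\wt{\Gr}_{\mu}\to X$, so $X$ is by construction a perfectly finitely presented scheme over $\Divtil$ carrying a relatively ample bundle $\cal{M}$ with $\phi^*\cal{M}\cong\wt{\cal{L}}_{\mu}^{\otimes N}$.

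Your endgame is also different and weaker. Instead of checking fibre-by-fibre that some projective embedding is a closed immersion (which, as you yourself flag, runs into uniformity and base-change issues over the two-dimensional base), the paper identifies $X$ with $\Gr_{\leq\mu}$ directly: using that $\phi$ has geometrically connected fibres and that $\cal{L}_{\mu}$ is ample (hence positive on curves) on each $\Gr_{\leq\nu}$ by induction, one checks on geometric points that
\[
\wt{\Gr}_{\mu}\times_{\Gr_{\leq\mu}}\wt{\Gr}_{\mu}=\wt{\Gr}_{\mu}\times_{X}\wt{\Gr}_{\mu}
\]
inside $\wt{\Gr}_{\mu}\times\wt{\Gr}_{\mu}$, and then the coequalizer description $\Gr_{\leq\mu}=\mathrm{coeq}(\wt{\Gr}_{\mu}\times_{\Gr_{\leq\mu}}\wt{\Gr}_{\mu}\rightrightarrows\wt{\Gr}_{\mu})$ forces $\Gr_{\leq\mu}\cong X$. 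This simultaneously yields representability and relative ampleness without any appeal to the classical fibrewise results of \cite{BS} or \cite{MV}, and sidesteps your ``main obstacle'' entirely.
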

\begin{proof}
The proof is the same as \cite[Theorem 8.3]{BS}.
Here we need the result in \S \ref{ssc:prelinbdl} such as perfect relative Keel's theorem.
\end{proof}
\subsection{Affine Grassmannian for general $G$}
Now we can show representability and projectivity of $\Gr_G$ for general $G$.
\begin{thm}(cf.\cite[Theorem 4.5.1]{BD})\label{thm:GrGrep}
Let $G$ be an arbitrary reductive group over $\cal{O}_E[[t]]$.
The affine Grassmannian $\Gr_G$ can be written as an increasing union of the perfections of projective schemes over $\Divtil$, along closed immersions.
\end{thm}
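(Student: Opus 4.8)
The plan is to reduce the statement for a general reductive group $G$ to the case $G=\GL_n$ settled in Theorem \ref{thm:Grrep}. First I would fix a faithful representation, i.e. a closed embedding $\iota\colon G\hookrightarrow \GL_{n,\cal{O}_E[[t]]}$ of group schemes over $\cal{O}_E[[t]]$ such that the fppf quotient $\GL_n/G$ is affine; such an embedding exists because $G$ is reductive (by Matsushima's criterion, valid in all characteristics, applied on the fibres over $\Spec \cal{O}_E[[t]]$ together with flatness of $G$). Composing the reduction-of-structure-group operation along $\iota$ with the definitions gives a morphism of \'etale quotient stacks $\Gr_{G,\Divtil}\to \Gr_{\GL_n,\Divtil}$, compatible with the Cartan stratifications.

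The key step, and the main obstacle, is to prove that $\Gr_{G,\Divtil}\to \Gr_{\GL_n,\Divtil}$ is a closed immersion. An $R$-point of $\Gr_{\GL_n,\Divtil}$ over $(\xi_1,\xi_2)$ is a $\GL_n$-torsor $\cal{E}$ on $\Spec B^+_{\xi_1,\xi_2}(R)$ together with a trivialization $\beta$ over $\Spec B_{\xi_1,\xi_2}(R)$; lifting it to $\Gr_{G,\Divtil}$ amounts to choosing a reduction of $\cal{E}$ to $G$ which over $\Spec B_{\xi_1,\xi_2}(R)$ corresponds under $\beta$ to the standard reduction. Since $\GL_n/G$ is affine, reductions of $\cal{E}$ to $G$ are the sections of the affine $\Spec B^+_{\xi_1,\xi_2}(R)$-scheme $\cal{E}\times^{\GL_n}(\GL_n/G)$, and requiring a section to agree with the prescribed one after inverting $\xi_2$ is a closed condition: this is the Beauville--Laszlo-type argument, and it goes through here because $\xi_2$ is a nonzerodivisor in $W_{\cal{O}_E}(R)[[t]]/(\xi_1)$ (Lemma \ref{lemm:xiadicexp}), $B^+_{\xi_1,\xi_2}(R)$ is $\xi_2$-adically complete, and $B^+_{\xi_1,\xi_2}(R)/\xi_2\cong R$ (Lemma \ref{lemm:BquotR}). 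Injectivity on $R$-points is the special case in which the reduction is already fixed. The delicate point is that the rings $B^+_{\xi_1,\xi_2}(R)$ are not Noetherian for general perfect $R$, so one must verify that the scheme of sections and the closed-immersion claim are well behaved, leaning on the structural Lemmas \ref{lemm:BquotR}--\ref{lemm:Bdvr}; on geometric points $\Spec C'$ everything is transparent since $B^+_{\xi_1,\xi_2}(C')$ is a complete discrete valuation ring by Lemma \ref{lemm:Bdvr}. (Alternatively, if one only arranges $\GL_n/G$ quasi-affine, the map is a locally closed immersion and one upgrades it to a closed immersion using that the $G$-Schubert varieties are perfectly proper over $\Divtil$, via a $G$-Demazure resolution built as in Section \ref{scn:indsch}.)

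Granting that $\Gr_{G,\Divtil}\hookrightarrow \Gr_{\GL_n,\Divtil}$ is a closed immersion, the theorem follows. As in the $\GL_n$ case, any $R$-point $\cal{E}\subset B_{\xi_1,\xi_2}(R)^n$ is trapped between $\xi_2^N\cal{E}_0$ and $\xi_2^{-M}\cal{E}_0$ for suitable $N,M$ (the generators of $\cal{E}$ and of $\cal{E}_0$ each lie in the other lattice after inverting $\xi_2$), so $\Gr_{\GL_n,\Divtil}=\colim_{\mu}\Gr_{\GL_n,\Divtil,\leq\mu}$ is an increasing union, along closed immersions by Corollary \ref{cor:Grclosed}, of perfections of projective $\Divtil$-schemes by Theorem \ref{thm:Grrep}. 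Pulling back along the closed immersion gives $\Gr_{G,\Divtil}=\colim_{\mu}\bigl(\Gr_{G,\Divtil}\times_{\Gr_{\GL_n,\Divtil}}\Gr_{\GL_n,\Divtil,\leq\mu}\bigr)$, and each term is a closed subfunctor of the perfection of a projective $\Divtil$-scheme, hence (descending the corresponding closed immersion to a finitely presented projective model) is itself the perfection of a projective $\Divtil$-scheme; the transition maps remain closed immersions. This exhibits $\Gr_{G,\Divtil}$ as an increasing union of perfections of projective $\Divtil$-schemes along closed immersions, as claimed.
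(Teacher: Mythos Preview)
Your proposal is correct and follows essentially the same strategy as the paper: embed $G$ into $\GL_n$ with affine quotient $X=\GL_n/G$, show that the induced map $\Gr_{G,\Divtil}\to\Gr_{\GL_n,\Divtil}$ is a closed immersion, and then deduce the result from Theorem~\ref{thm:Grrep} and Corollary~\ref{cor:Grclosed}. The only cosmetic difference is in the packaging of the closed-immersion step: the paper (following \cite[Theorem 4.5.1]{BD} and \cite[Corollary 9.6]{BS}) works directly with loop spaces, setting $Y=L\GL_n\times_{LX}L^+X$ and using that $L^+X\hookrightarrow LX$ is a closed subfunctor for affine $X$, then identifies the image $Y'\subset\Gr_{\GL_n}$ with $\Gr_G$; your torsor-theoretic phrasing via sections of $\cal{E}\times^{\GL_n}(\GL_n/G)$ is the same statement in different language, since the closedness of ``agree with the prescribed section after inverting $\xi_2$'' is exactly $L^+X\hookrightarrow LX$ being closed.
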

\begin{proof}
The proof is the same as \cite[Theorem 4.5.1]{BD} or \cite[Corollary 9.6]{BS}.
%
%
%
\end{proof}
Next, we show that Schubert cells are smooth schemes.
We need the following proposition:
\begin{prop}(cf.\cite[Proposition VI.2.4]{FS})\label{prop:localbasis}
Assume $G=\GL_n$.
Let $\mu=(m_1,\ldots,m_n)\in \bb{X}_*^+(T)$ and $(\xi_1,\xi_2,\cal{E})\in \Gr_{\GL_n,\Divtil,\mu}(R)$.
There exists \'{e}tale locally on $R$ a $B^+_{\xi_1,\xi_2}(R)$-basis $g_1,\ldots,g_n$ of $\cal{E}$ of the form
\[
g_i=\xi_2^{m_i}f_i
\]
such that $f_1,\ldots,f_n$ is a $B^+_{\xi_1,\xi_2}(R)$-basis of $\cal{E}_0:=B^+_{\xi_1,\xi_2}(R)^n$.
\end{prop}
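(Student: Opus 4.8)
The plan is to reduce to the statement that the universal quotient $\cal{Q} = \cal{E}_0/\cal{E}$ on $\Gr_{\GL_n,\Divtil,\mu}$, which is of type $\mu$ by the description of $\Gr_\mu$ given above, admits \'etale-locally a compatible splitting of its $\xi_2$-adic-type filtration. First I would recall that since $(\xi_1,\xi_2,\cal{E})\in\Gr_{\GL_n,\Divtil,\mu}(R)$ and all $m_i\geq 0$ (after twisting by a power of $\xi_2$ as in the canonical isomorphism $\Gr_{\leq(m_1,\dots,m_n)}\cong\Gr_{\leq(m_1+l,\dots,m_n+l)}$, which does not affect the existence of a basis of the stated shape), Lemma \ref{lemm:condincl} gives $\cal{E}\subset\cal{E}_0$ and $\cal{Q}=\cal{E}_0/\cal{E}$ is a finitely generated $\xi_2$-power torsion $B^+_{\xi_1,\xi_2}(\cal{O}_{\Spec R})$-module of type $\mu$ everywhere on $\Spec R$; by Lemma \ref{lemm:projdim1proj} it is of projective dimension $1$. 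By Lemma \ref{lemm:Demrat} the Demazure scheme $\rm{Dem}_\mu(\cal{Q})\to\Spec R$ is then an \emph{isomorphism}, so the universal filtration of type $\mu$ on $\cal{Q}$ exists already over $\Spec R$: there is a canonical filtration $0=\rm{Fil}^n\cal{Q}\subset\cdots\subset\rm{Fil}^0\cal{Q}=\cal{Q}$ whose graded pieces $\rm{gr}^j\cal{Q}$ are finite projective $\cal{O}_{\Spec R}$-modules of the appropriate ranks.

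Next I would observe that, because $\cal{Q}$ is of type $\mu$ fiberwise, this filtration is forced to coincide with the $\xi_2$-adic filtration on $\cal{Q}$: one has $\rm{Fil}^j\cal{Q}=\cal{Q}[\xi_2^{n-j}]$ (the $\xi_2^{n-j}$-torsion), and the graded pieces are the $\xi_2^i\cal{Q}/\xi_2^{i+1}\cal{Q}$, each of which is finite projective over $\cal{O}_{\Spec R}=B^+_{\xi_1,\xi_2}(R)/\xi_2$ of the rank prescribed by the multiplicities in $\mu$. After passing to an \'etale cover of $\Spec R$ that simultaneously trivializes these finitely many finite projective $R$-modules (equivalently, splits the filtration), one obtains an $R$-basis of $\cal{Q}$ adapted to the $\xi_2$-adic filtration. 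Lifting this basis through the surjection $\cal{E}_0\twoheadrightarrow\cal{Q}$ — using projectivity of $\cal{E}_0=B^+_{\xi_1,\xi_2}(R)^n$ and, after a further Zariski/\'etale localization, Nakayama exactly as in the proof of Lemma \ref{lemm:projdim1proj} — produces elements $f_1,\dots,f_n\in\cal{E}_0$ which form a $B^+_{\xi_1,\xi_2}(R)$-basis of $\cal{E}_0$ and such that $\xi_2^{m_1}f_1,\dots,\xi_2^{m_n}f_n$ is a basis of $\cal{E}$: indeed the submodule they generate is contained in $\cal{E}$ (since $\xi_2^{m_i}$ kills the image of $f_i$ in $\cal{Q}$), has the correct image in $\cal{Q}$, and the inclusion is an isomorphism after tensoring with every residue field $B^+_{\xi_1,\xi_2}(k(x))$ by Lemma \ref{lemm:Bdvr}, hence an isomorphism by Nakayama over $B^+_{\xi_1,\xi_2}(R)$.

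The main obstacle I anticipate is not any single deep input but rather pinning down that the canonical type-$\mu$ filtration agrees with the $\xi_2$-adic one at the level of $R$-points (not just geometric points) so that its graded pieces are genuinely finite projective $R$-modules that one may trivialize \'etale-locally; this is where the identification $\rm{Dem}_\mu(\cal{Q})\xrightarrow{\sim}\Spec R$ from Lemma \ref{lemm:Demrat} together with the structure theory of $B^+_{\xi_1,\xi_2}(R)$-modules of type $\mu$ (Lemma \ref{lemm:xiadicexp}, Lemma \ref{lemm:projdim1proj}) does the real work. The lifting step and the final Nakayama argument are then routine, being essentially a repetition of the argument already carried out in the proof of Lemma \ref{lemm:projdim1proj}.
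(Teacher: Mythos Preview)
Your approach via the Demazure machinery is genuinely different from the paper's, and the first half is fine: invoking Lemma~\ref{lemm:Demrat} (or equivalently Corollary~\ref{cor:psirat}) to see that over $\Gr_{\mu}$ the Demazure resolution is an isomorphism, hence the canonical type-$\mu$ filtration on $\cal{Q}=\cal{E}_0/\cal{E}$ already lives over $\Spec R$ with finite projective graded pieces, is a nice way to package the projectivity statements. The paper instead works directly inside $\cal{E}_0/\xi_2\cal{E}_0\cong R^n$: it sets $F^i=(\xi_2^i\cal{E}\cap\cal{E}_0)/(\xi_2^i\cal{E}\cap\xi_2\cal{E}_0)\subset R^n$, proves by a short descending induction (using Lemma~\ref{lemm:projdim1proj} and a fiberwise projectivity criterion) that each $F^i$ is finite projective of the expected rank, and then simply chooses an $R$-basis $e_1,\ldots,e_n$ of $R^n$ adapted to this complete flag and lifts $e_j$ to $f_j\in\xi_2^{i}\cal{E}\cap\cal{E}_0$ for the appropriate $i$.

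The gap in your argument is precisely the lifting step, and it is not just a matter of Nakayama bookkeeping. An ``$R$-basis of $\cal{Q}$ adapted to the filtration'' has $|\mu|=\sum m_i$ elements, not $n$; what you actually need is a $B^+_{\xi_1,\xi_2}(R)$-module splitting $\cal{Q}\cong\bigoplus_i B^+_{\xi_1,\xi_2}(R)/\xi_2^{m_i}$, and knowing that the $R$-module graded pieces of the Demazure filtration are free does not produce such a $B^+$-splitting (the $R$-linear sections you obtain need not be compatible with multiplication by $\xi_2$). Even granting such a splitting, lifting its $n$ cyclic generators to $\cal{E}_0$ does not force the lifts to be a $B^+$-basis of $\cal{E}_0$: for indices with $m_i=0$ the generator is zero in $\cal{Q}$ and carries no information, and for the remaining indices nothing guarantees their images in $\cal{E}_0/\xi_2\cal{E}_0$ are linearly independent. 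What is really required is a complete flag of projective submodules inside $\cal{E}_0/\xi_2\cal{E}_0\cong R^n$ to which one can adapt a basis; your Demazure filtration on $\cal{Q}$ can be pushed forward to such a flag (via the preimages $\cal{E}_j\subset\cal{E}_0$ and their images mod $\xi_2$), but carrying this out and checking the subquotients are projective is exactly the content of the paper's direct argument, which your sketch bypasses rather than replaces.
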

\begin{proof}
The proof is similar to the proof of \cite[Proposition VI.2.4]{FS}.
%
\end{proof}
Let 
\[
(L^+G)_{\mu}\subset L^+G
\]
be the closed subgroup stabilizing $[\mu]$.
\begin{prop}(cf.\cite[Proposition VI.2.4]{FS}) \label{prop:LGmu}
The Schubert cell $\Gr_{G, \mu}$ is $L^+G$-orbit of $[\mu]$.
In particular, 
\[
\Gr_{G, \mu}\cong L^+G/(L^+G)_{\mu}.
\]
Put
\[
(L^+G)_{\mu}^{\geq m}:=(L^+G)_{\mu}\cap (L^+G)^{\geq m}.
\]
Then there are natural isomorphisms
\begin{align*}
&(L^+G)_{\mu}/(L^+G)_{\mu}^{\geq 1}\cong \bar{P}_{\mu}^- \times_{\Spec k_E}\Divtil\\
&\subset L^+G/(L^+G)^{\geq 1}\cong \bar{G}\times_{\Spec k_E}\Divtil, 
\end{align*}
and
\begin{align*}
&(L^+G)_{\mu}^{\geq m}/(L^+G)_{\mu}^{\geq m+1}\cong (\rm{Lie}(\bar{G}))_{\mu\leq m} \times_{\Spec k_E}\Divtil)\{m\}\\
&\subset (L^+G)^{\geq m}/(L^+G)^{\geq m+1}\cong (\rm{Lie}(\bar{G}) \times_{\Spec k_E}\Divtil)\{m\},
\end{align*}
where $(\rm{Lie}(\bar{G}))_{\mu\leq m}\subset \rm{Lie}(\bar{G})$ is the subspace on which $\mu$ act via weights $\leq m$, and $\bar{P}_{\mu}^-$ is the parabolic subgroup with Lie algebra $(\rm{Lie}(\bar{G}))_{\mu\leq 0}$.
In particular, the Schubert cell $\Gr_{G, \mu}$ is a perfectly smooth scheme over $\Divtil$ of dimension $\langle 2\rho,\mu\rangle $, where $\rho$ is a half sum of positive roots.
\end{prop}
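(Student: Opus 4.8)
The plan is to identify $\Gr_{G,\mu}$ with $L^+G/(L^+G)_{\mu}$ as étale sheaves and then to read off its structure from the filtration $(L^+G)^{\geq m}$ of $L^+G$. Note first that $\Gr_{G,\mu}$ is already a scheme, being an open subfunctor of $\Gr_{G,\leq\mu}$ by Corollary \ref{cor:Grclosed} (together with Theorem \ref{thm:GrGrep} for general $G$). Since $[\mu]$ lies in $\Gr_{G,\mu}$ (for $\mu$ dominant, by the Cartan decomposition over geometric points together with Lemma \ref{lemm:Bdvr}) and the $L^+G$-action preserves the subfunctor $\Gr_{G,\mu}$, the orbit map $\theta\colon L^+G\to\Gr_{G,\mu}$, $g\mapsto g[\mu]$, is well defined and factors through a monomorphism of étale sheaves $L^+G/(L^+G)_{\mu}\hookrightarrow\Gr_{G,\mu}$. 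Everything then comes down to showing that $\theta$ is an epimorphism of étale sheaves.

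I would reduce this to a finite-dimensional situation. Conjugation by $\mu(\xi_2)$ multiplies the root-$\alpha$ part of $\rm{Lie}(\bar{G})$ by $\xi_2^{\pm\langle\alpha,\mu\rangle}$, so for $N>\max_{\alpha}\langle\alpha,\mu\rangle$ one has $\mu(\xi_2)^{-1}(L^+G)^{\geq N}\mu(\xi_2)\subset L^+G$, hence $(L^+G)^{\geq N}\subset(L^+G)_{\mu}$; moreover $(L^+G)^{\geq N}$ is normal in $L^+G$ and, using $\max_{\alpha}\langle\alpha,\nu\rangle\leq\max_{\alpha}\langle\alpha,\mu\rangle$ for $\nu\leq\mu$, it fixes every geometric point of $\Gr_{G,\leq\mu}$, so it acts trivially there. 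Thus the action and $\theta$ descend to $\bar{H}:=L^+G/(L^+G)^{\geq N}$, which by iterating the structure of $(L^+G)^{\geq m}$ established above is a perfectly smooth affine group scheme over $\Divtil$, with closed subgroup $\bar{H}_{\mu}:=(L^+G)_{\mu}/(L^+G)^{\geq N}$. Now $\bar{H}/\bar{H}_{\mu}$ is representable by a perfect scheme (via a finitely presented model, where it is a standard quasi-projective quotient), and $\bar{H}/\bar{H}_{\mu}\to\Gr_{G,\mu}$ is an $\bar{H}$-equivariant monomorphism of perfect $\Divtil$-schemes which is surjective on geometric points. Since it is $\bar{H}$-equivariant with $\bar{H}$ acting transitively on geometric points, generic flatness (applied to a finitely presented model) propagates to flatness everywhere; a flat monomorphism locally of finite presentation is an open immersion, and a surjective open immersion is an isomorphism. (For $G=\GL_n$ one can instead argue directly: the étale-local basis of Proposition \ref{prop:localbasis} exhibits every point as $g\cdot[\mu]$ with $g\in L^+\GL_n$.) I expect this orbit step to be the main obstacle; it gives $\Gr_{G,\mu}\cong L^+G/(L^+G)_{\mu}$.

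It remains to compute $(L^+G)_{\mu}$ through the filtration, using that $g\cdot[\mu]=[\mu]$ is equivalent to $\mu(\xi_2)^{-1}g\mu(\xi_2)\in L^+G$. At level $0$: an element $\bar{g}\in\bar{G}\cong L^+G/(L^+G)^{\geq1}$ over a point fixes $[\mu]$ modulo $\xi_2$ exactly when conjugation by $\mu(\xi_2)$ produces no negative power of $\xi_2$ modulo $\xi_2$, i.e. when $\bar{g}$ lies in the parabolic with Lie algebra $(\rm{Lie}(\bar{G}))_{\mu\leq0}$, which is $\bar{P}_{\mu}^{-}$; hence $(L^+G)_{\mu}/(L^+G)_{\mu}^{\geq1}\cong\bar{P}_{\mu}^{-}\times_{\Spec k_E}\Divtil$. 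At level $m\geq1$: conjugating $1+x\xi_2^{m}$ with $x\in\rm{Lie}(\bar{G})$ by $\mu(\xi_2)^{-1}$ moves the root-$\alpha$ component of $x$ into $\xi_2$-degree $m-\langle\alpha,\mu\rangle$, which is integral iff $\langle\alpha,\mu\rangle\leq m$; hence $(L^+G)_{\mu}^{\geq m}/(L^+G)_{\mu}^{\geq m+1}\cong((\rm{Lie}(\bar{G}))_{\mu\leq m}\times_{\Spec k_E}\Divtil)\{m\}$, sitting inside $(\rm{Lie}(\bar{G})\times_{\Spec k_E}\Divtil)\{m\}$ as claimed.

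Finally, $\Gr_{G,\mu}\cong L^+G/(L^+G)_{\mu}\cong\bar{H}/\bar{H}_{\mu}$ is the quotient of a perfectly smooth affine $\Divtil$-group — a successive extension of $\bar{G}\times_{\Spec k_E}\Divtil$ by the vector groups $(\rm{Lie}(\bar{G})\times_{\Spec k_E}\Divtil)\{m\}$ — by the perfectly smooth closed subgroup $\bar{H}_{\mu}$ — a successive extension of $\bar{P}_{\mu}^{-}\times_{\Spec k_E}\Divtil$ by $((\rm{Lie}(\bar{G}))_{\mu\leq m}\times_{\Spec k_E}\Divtil)\{m\}$, $1\leq m<N$ — so it is perfectly smooth over $\Divtil$ of relative dimension
\[
\dim(\bar{G}/\bar{P}_{\mu}^{-})+\sum_{m\geq1}\dim\bigl(\rm{Lie}(\bar{G})/(\rm{Lie}(\bar{G}))_{\mu\leq m}\bigr)=\sum_{m\geq0}\#\{\alpha\mid\langle\alpha,\mu\rangle>m\}=\sum_{\alpha>0}\langle\alpha,\mu\rangle=\langle2\rho,\mu\rangle,
\]
all sums being finite by the choice of $N$. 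This completes the plan.
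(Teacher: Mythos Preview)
Your argument is correct and follows the same overall architecture as the paper's (orbit identification, then stabilizer analysis through the congruence filtration), but the orbit step is handled differently. The paper's proof is terse: it reduces to $G=\GL_n$ via the closed immersion of Theorem~\ref{thm:GrGrep} and then invokes Proposition~\ref{prop:localbasis} directly---\'etale locally every lattice in $\Gr_{\GL_n,\mu}$ admits a basis $\xi_2^{m_i}f_i$ with $(f_i)$ a basis of $\cal{E}_0$, which exhibits it as $g\cdot[\mu]$ for some $g\in L^+\GL_n$, giving surjectivity of the orbit map at once. You instead establish orbit surjectivity for general split $G$ by descending to the finite-type quotient $\bar H=L^+G/(L^+G)^{\geq N}$ and showing that the monomorphism $\bar H/\bar H_\mu\hookrightarrow\Gr_{G,\mu}$, bijective on geometric points by Cartan, is an isomorphism via generic flatness propagated by $\bar H$-equivariance; you relegate the $\GL_n$ shortcut to a parenthetical. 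Your route is more conceptual and avoids the reduction step, at the cost of a slightly delicate flatness argument on models; the paper's route is more hands-on but gets the surjectivity in one line once Proposition~\ref{prop:localbasis} is available. The filtration computations and dimension count you write out explicitly are precisely the ``several calculations'' the paper leaves to the reader.
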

\begin{proof}
The proof is the same as \cite[Proposition VI.2.4]{FS}.
The claim reduces to the case $G=\GL_n$.
In this case, the claim follows from Proposition \ref{prop:localbasis} and several calculations together with Proposition \ref{thm:GrGrep}.
\end{proof}
\subsection{Affine flag variety}
Recall that $\wt{W}$ acts on $\bb{X}^*(T)$.
Let $\fk{a}\subset \bb{X}^*(T)\otimes \bb{Q}$ be an alcove corresponding to the Iwahori group $\cal{I}(B^+_{\xi_1,\xi_2}(C'))\subset G(B_{\xi_1,\xi_2}(C'))$, which does not depend on the choice of an algebraically closed field $C'$.
Let $\{s_i\}$ be the affine simple reflections along the faces of the alcove, generating a normal subgroup $W_{\rm{aff}}\subset \wt{W}$.
Let $\Omega\subset \wt{W}$ be the stabilizer of the alcove.
Then we have
\[
\wt{W}=W_{\rm{aff}}\rtimes \Omega.
\]

For an affine simple reflection $s_i$ in $W_{\rm{aff}}$, we can define a subgroup
\[
\cal{P}_i\subset L^+G
\]
in the same way as \cite[\S VI.5]{FS}.

For $w\in \wt{W}$, we can write
\[
w=s_{i_1}\ldots s_{i_l}\omega,
\]
where $s_{i_1},\ldots,s_{i_l}$ are affine simple reflections in $W_{\rm{aff}}$ and $\omega\in \Omega$.
Write $\dot{w}$ for the element $w$ with such a choice of decomposition.
\begin{defi}
The Demazure variety for $\dot{w}$ is defined by
\[
\rm{Dem}_{G,\Divtil,\dot{w}}=\cal{P}_{i_1}\times^{\cal{I}}\cdots \times^{\cal{I}} \cal{P}_{i_l}\to \Divtil,
\]
and the Demazure resolution for $\dot{w}$ is a map
\[
\rm{Dem}_{G,\Divtil,\dot{w}}\to \Fl_{G,\Divtil}
\]
defined by $(p_1,\ldots,p_l)\mapsto p_1\ldots p_l\omega$.
\end{defi}
\begin{lemm}
The map $\rm{Dem}_{G,\Divtil,\dot{w}}\to \Divtil$ is a successive perfect $\bb{P}^1$-bundle, in particular, perfectly proper and perfectly smooth.
The map
\[
\Psi\colon \rm{Dem}_{G,\Divtil,\dot{w}}\to \Fl_{G,\Divtil}
\]
is perfectly proper and its image is $\Fl_{G,\Divtil,\leq w}$.
Moreover,
\[
\Psi|_{\Psi^{-1}(\Fl_{w})}\colon \Psi^{-1}(\Fl_{w}) \to \Fl_{w}
\]
is an isomorphism.
\end{lemm}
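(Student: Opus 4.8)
The plan is to reduce every assertion to the corresponding classical statement about Demazure resolutions, using that over a geometric point $\Spec C'\to\Divtil$ the ring $B^+_{\xi_1,\xi_2}(C')$ is a complete discrete valuation ring by Lemma \ref{lemm:Bdvr}, so that $\Fl_{G,\Divtil}\times_{\Divtil}\Spec C'$ and $\rm{Dem}_{G,\Divtil,\dot w}\times_{\Divtil}\Spec C'$ are the perfections of the usual affine flag variety and Demazure variety over that DVR (of equal-characteristic type $C'[[t]]$ or of ramified Witt-vector type).

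For the structural claim I would forget the last factor: the projection $\rm{Dem}_{G,\Divtil,\dot w}=\cal P_{i_1}\times^{\cal I}\cdots\times^{\cal I}\cal P_{i_l}\to \cal P_{i_1}\times^{\cal I}\cdots\times^{\cal I}\cal P_{i_{l-1}}$ is a fibration with fibre $\cal P_{i_l}/\cal I$. By the construction of $\cal P_i$ (following \cite[\S VI.5]{FS}) together with Lemma \ref{lemm:BquotR}, reduction modulo $\xi_2$ identifies $\cal P_i/\cal I$ with $(\bar P_i/\bar B)^{\perf}=\bb P^{1,\perf}$ over $\Divtil$, so each step is a perfect $\bb P^1$-bundle; by induction on $l$ the map $\rm{Dem}_{G,\Divtil,\dot w}\to\Divtil$ is a successive perfect $\bb P^1$-bundle, hence perfectly proper and perfectly smooth. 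Then $\Psi$ is perfectly proper because its source is perfectly proper over $\Divtil$ while $\Fl_{G,\Divtil}$ is an ind-perfectly-projective ind-scheme over $\Divtil$ — it is a $(\bar G/\bar B)^{\perf}$-bundle over $\Gr_{G,\Divtil}$ by Proposition \ref{prop:mapFlGr}, and $\Gr_{G,\Divtil}$ is ind-perfectly-projective by Theorem \ref{thm:GrGrep} — so $\Psi$ factors through a perfectly projective closed subscheme and the factorisation is proper.

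To identify the image, note that $\im(\Psi)$ is a closed, reduced subscheme (the source is reduced, being perfect), so by the fact that every test object in $\rm{PerfAffSch}_{/\Divtil}$ is reduced one may argue exactly as in Corollary \ref{cor:Grclosed}: a perfect $T$ maps into $\im(\Psi)$ iff all its geometric points do, and the same holds for the subfunctor $\Fl_{G,\Divtil,\leq w}$ by definition. Hence it suffices to compare geometric points lying over $\Spec C'\to\Divtil$, where $\Psi$ becomes the classical Demazure resolution; in both the equicharacteristic case (as in \cite{FS,BS}) and the ramified Witt-vector case (as in \cite{Zhumixed,BS}) the image is the affine Schubert variety $\overline{\cal I(C')w\cal I(C')}/\cal I(C')$ inside $G(B_{\xi_1,\xi_2}(C'))/\cal I(C')$. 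Combining over all geometric points yields $\im(\Psi)=\Fl_{G,\Divtil,\leq w}$, and in particular proves that the latter is a closed subfunctor.

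Finally, the restriction $\Psi|_{\Psi^{-1}(\Fl_w)}\colon \Psi^{-1}(\Fl_w)\to\Fl_w$ is proper, and over every geometric point of $\Divtil$ it is the classical isomorphism between the preimage of the open Schubert cell and the cell itself; thus it is a proper universal homeomorphism of perfect schemes, hence an isomorphism, since the perfection functor inverts universal homeomorphisms. Alternatively, one can mimic Corollary \ref{cor:psirat}, checking $R\Psi_*\cal O=\cal O$ and the isomorphism over $\Fl_w$ on geometric fibres via \cite[Lemma 6.9]{BS}. I expect the main obstacle to be the fibrewise reduction in the image computation — in particular matching the functorial definition of $\Fl_{G,\Divtil,\leq w}$ with the classical Schubert variety uniformly across the equicharacteristic and ramified Witt-vector fibres, and upgrading the topological statements to the level of (perfect) scheme structures.
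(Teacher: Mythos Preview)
Your proposal is correct and follows essentially the same approach as the paper: the identification $\cal{P}_{i_j}/\cal{I}\cong\bb{P}^{1,\perf}$ gives the successive $\bb{P}^1$-bundle structure and hence properness of $\Psi$, and the remaining assertions are reduced to geometric fibres over $\Divtil$, where they become the classical statements about Demazure resolutions. The paper's proof is very terse; it invokes \cite[Corollary 6.10]{BS} as the tool for passing to geometric fibres and refers to \cite[Theorem VI.5.5]{FS} for the ``standard argument'', which amounts to exactly the fibrewise identification you spell out. Your alternative route for the isomorphism over $\Fl_w$ via $R\Psi_*\cal{O}=\cal{O}$ and \cite[Lemma 6.9]{BS} is in fact closer to how the paper phrases it than your first argument via universal homeomorphisms (which would still require verifying that a proper bijection on geometric points is a universal homeomorphism).
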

\begin{proof}
The first statement follows from an isomorphism $\cal{P}_{i_j}/\cal{I}\cong \bb{P}^{1,\perf}$.
In particular, $\Psi$ is perfectly proper.
The remaining parts can be checked on the geometric fibers by \cite[Corollary 6.10]{BS}.
Then the lemma follows from the standard argument (cf. \cite[Theorem VI.5.5]{FS}).
\end{proof}
By this lemma and the same argument as Corollary \ref{cor:Grclosed}, we can show the following:
\begin{cor}\label{cor:Flclosed}
The subfunctor $\Fl_{G,\Divtil,\leq w}\subset \Fl_{G,\Divtil}$ is a closed subfunctor.
Moreover, $\Fl_{G,\Divtil,w}\subset \Fl_{G,\Divtil,\leq w}$ is an open subfunctor.
\end{cor}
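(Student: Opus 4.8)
The plan is to transcribe the proof of Corollary~\ref{cor:Grclosed}, with the Demazure resolution $\Psi\colon \rm{Dem}_{G,\Divtil,\dot{w}}\to \Fl_{G,\Divtil}$ of the preceding lemma playing the role of $\psi\colon \wt{\Gr}_{\mu}\to\Gr_{\leq\mu}$.

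For the first assertion, recall from the preceding lemma that $\Psi$ is perfectly proper and that its image is exactly $\Fl_{G,\Divtil,\leq w}$. Since the image of a perfectly proper morphism is closed, $\Fl_{G,\Divtil,\leq w}\subset\Fl_{G,\Divtil}$ is a closed subfunctor.

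For the second assertion, I would use the stratification $\Fl_{G,\Divtil,\leq w}=\bigsqcup_{w'\leq w}\Fl_{G,\Divtil,w'}$ together with the inclusions $\Fl_{G,\Divtil,\leq w'}\subset\Fl_{G,\Divtil,\leq w}$ for $w'\leq w$, both of which are immediate from the pointwise definition of the affine Schubert varieties and the transitivity of the Bruhat order. The Bruhat interval $\{w'\in\wt{W}\mid w'\leq w\}$ is finite, so by the first assertion $\bigcup_{w'<w}\Fl_{G,\Divtil,\leq w'}$ is a closed subfunctor of $\Fl_{G,\Divtil,\leq w}$, and hence
\[
\Fl_{G,\Divtil,w}=\Fl_{G,\Divtil,\leq w}\setminus\bigcup_{w'<w}\Fl_{G,\Divtil,\leq w'}
\]
is an open subfunctor.

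The only input that is not purely formal is the set-theoretic description of the strata: that a geometric point of $\Fl_{G,\Divtil}$ lies in the image of $\Psi$ precisely when the corresponding point of the affine flag variety over the relevant valued field belongs to $\cal{I}w'\cal{I}/\cal{I}$ for some $w'\leq w$, and that these cells are pairwise disjoint. By \cite[Corollary 6.10]{BS} this may be checked on geometric fibers, where by Lemma~\ref{lemm:Bdvr} the fibers of $\Fl_{G,\Divtil}$ are ordinary (equal-characteristic or Witt-vector) affine flag varieties, so the statement is the classical Bruhat decomposition together with its closure order. I expect this fibrewise bookkeeping to be the only mild obstacle; the rest is a verbatim copy of the argument for Corollary~\ref{cor:Grclosed}.
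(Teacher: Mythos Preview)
Your proposal is correct and follows essentially the same approach as the paper: the paper's own proof is simply ``By this lemma and the same argument as Corollary~\ref{cor:Grclosed}'', and you have faithfully unpacked exactly that argument, using the properness of $\Psi$ for the closedness of $\Fl_{G,\Divtil,\leq w}$ and the finite Bruhat stratification for the openness of $\Fl_{G,\Divtil,w}$. Your additional remarks on the fibrewise Bruhat decomposition are a reasonable elaboration of what the paper leaves implicit.
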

From the result for affine Grassmannian, we can deduce a result for affine flag variety.
\begin{prop}
Assume $G$ is split.
The Schubert variety $\Fl_{G,\Divtil, \leq w}$ is represented by the perfection of a projective algebraic space over $\Divtil$.
\end{prop}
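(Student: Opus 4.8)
The strategy is to reduce to the projectivity of the affine Grassmannian Schubert varieties (Theorem \ref{thm:Grrep}, Theorem \ref{thm:GrGrep}) via the bundle map $\pi\colon\Fl_{G,\Divtil}\to\Gr_{G,\Divtil}$ of Proposition \ref{prop:mapFlGr}. First I would note that $\Fl_{G,\Divtil,\leq w}$ is quasi-compact, being the image of the perfectly proper $\Divtil$-scheme $\rm{Dem}_{G,\Divtil,\dot{w}}$ of the preceding lemma, while $\Divtil\cong\GL_{2,k_E}^{\perf}$ is affine. Hence $\pi(\Fl_{G,\Divtil,\leq w})$ is quasi-compact inside $\Gr_{G,\Divtil}$, so by Theorem \ref{thm:GrGrep} it is contained in some closed subfunctor $Z\subseteq\Gr_{G,\Divtil}$ that is the perfection of a projective scheme over $\Divtil$ and carries a relatively ample line bundle $\cal{L}_Z$ over $\Divtil$. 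Since $\Fl_{G,\Divtil,\leq w}$ (Corollary \ref{cor:Flclosed}) and $\pi^{-1}(Z)$ (Corollary \ref{cor:Grclosed}) are both closed subfunctors of $\Fl_{G,\Divtil}$ defined by conditions on geometric points, the set-theoretic inclusion $\pi(\Fl_{G,\Divtil,\leq w})\subseteq Z$ upgrades to an inclusion of subfunctors, so $\Fl_{G,\Divtil,\leq w}$ is a closed subfunctor of $\pi^{-1}(Z)$, and it suffices to treat $\pi^{-1}(Z)$.

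By Proposition \ref{prop:mapFlGr} the map $\pi$ realizes $\Fl_{G,\Divtil}$ as the bundle associated with the tautological $L^+_{\Divtil}G$-torsor on $\Gr_{G,\Divtil}$ and the $L^+_{\Divtil}G$-space $L^+_{\Divtil}G/\cal{I}$, whose underlying space is $(\bar{G}/\bar{B})^{\perf}$; as the action factors through the quotient $L^+_{\Divtil}G/(L^+_{\Divtil}G)^{\geq 1}\cong\bar{G}\times_{\Spec k_E}\Divtil$, the restriction $\pi^{-1}(Z)\to Z$ is the $(\bar{G}/\bar{B})^{\perf}$-bundle associated with a $\bar{G}$-torsor $P$ on $Z$, obtained by pushing forward a finite-level quotient of the tautological torsor. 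The torsor $P$ is \'etale-locally trivial since $\bar{G}\times_{\Spec k_E}\Divtil$ is smooth affine, so $\pi^{-1}(Z)$ is representable by the perfection of an algebraic space, perfectly proper over $Z$ and hence over $\Divtil$. Fixing a regular dominant character $\lambda$ of $\bar{T}$, the $\bar{G}$-equivariant ample line bundle $\cal{O}_{\bar{G}/\bar{B}}(\lambda)$ descends along $P\times^{\bar{G}}(\bar{G}/\bar{B})^{\perf}\to Z$ to a line bundle $\cal{M}_\lambda$ on $\pi^{-1}(Z)$ which is ample on the fibers of $\pi$; by Lemma \ref{lemm:perfample} and the usual criterion for ampleness of a composition, $\pi^{*}\cal{L}_Z^{\otimes N}\otimes\cal{M}_\lambda$ is relatively ample over $\Divtil$ for $N\gg 0$. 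Thus $\pi^{-1}(Z)$, and therefore its closed subspace $\Fl_{G,\Divtil,\leq w}$, is the perfection of a projective algebraic space over $\Divtil$.

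I expect the main obstacle to be the representability claim in the second paragraph: one has to pass to a finitely presented model over $\GL_{2,k_E}$ on which the $\bar{G}$-torsor becomes an honest torsor under the reductive $k_E$-group $\bar{G}$ over a finitely presented scheme, and then use effectivity of \'etale descent for the projective fiber $\bar{G}/\bar{B}$; it is precisely the appearance of an \'etale, rather than Zariski, local trivialization that makes ``algebraic space'' the natural target. An alternative route, parallel to the proof of Theorem \ref{thm:Grrep}, would be to argue directly with the Demazure resolution $\Psi\colon\rm{Dem}_{G,\Divtil,\dot{w}}\to\Fl_{G,\Divtil,\leq w}$: one pulls back a line bundle $\cal{L}_\lambda$ attached to a regular dominant character of $\cal{I}$, checks it is relatively ample on the successive $\bb{P}^1$-bundle $\rm{Dem}_{G,\Divtil,\dot{w}}$, shows by induction on $\ell(w)$ that its exceptional locus lies over $\bigcup_{v<w}\Fl_{G,\Divtil,\leq v}$, and then invokes the perfect relative Keel's theorem together with the coequalizer argument; in that approach the fiberwise triviality computation needed to descend the line bundle along $\Psi$ would be the delicate point.
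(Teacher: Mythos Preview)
Your proposal is correct and follows essentially the same route as the paper: reduce to the projectivity of a Grassmannian Schubert variety via the $(\bar G/\bar B)^{\perf}$-bundle $\Fl_{G,\Divtil}\to\Gr_{G,\Divtil}$ of Proposition~\ref{prop:mapFlGr}, and then realize $\Fl_{G,\Divtil,\leq w}$ as a closed subfunctor of the preimage. The paper is terser---it directly observes that the $\cal I$-orbit $\Fl_{G,\Divtil,w}$ lands in $f^{-1}(\Gr_\mu)$ for a specific $\mu$ rather than invoking quasi-compactness of the Demazure image, and it does not spell out the ample line bundle on the $(\bar G/\bar B)^{\perf}$-bundle---but the underlying idea is identical; your alternative Keel-type route is not what the paper does.
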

\begin{proof}
By Proposition \ref{prop:mapFlGr}, there exists a $G/B$-bundle 
\[
f\colon \Fl_{G,\Divtil}\to \Gr_{G,\Divtil}
\]
which is \'{e}tale locally trivial.
Note that $\cal{I}$-orbit $\Fl_{G,\Divtil,w}$ is contained in the inverse image of a $L^+G$-orbit $\Gr_{\mu}$ for some $\mu$.
Thus $\Fl_{G,\Divtil,\leq w}$ is contained in $f^{-1}(\Gr_{\leq \mu})$ for some $\mu$ as a closed subfunctor by Corollary \ref{cor:Flclosed}.
The proposition follows from the fact that $\Gr_{G,\Divtil,\leq \mu}$ is the perfection of a projective scheme over $\Divtil$.
\end{proof}
\begin{cor}
The affine flag variety $\Fl_{G,\Divtil}$ can be represented by the inductive limit of the perfection of projective algebraic spaces over $\Divtil$, along closed immersions.
\end{cor}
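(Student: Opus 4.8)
The plan is to pull back along the flag bundle the ind-scheme presentation of the affine Grassmannian. By Proposition~\ref{prop:mapFlGr} the morphism $f\colon \Fl_{G,\Divtil}\to \Gr_{G,\Divtil}$ is an \'etale-locally trivial $(\bar{G}/\bar{B})^{\perf}$-bundle, in particular perfectly proper, and hence quasi-compact and quasi-separated. By Theorem~\ref{thm:GrGrep} we may write $\Gr_{G,\Divtil}=\colim_i \Gr^{(i)}$ as a filtered colimit, along closed immersions, of perfections of projective schemes over $\Divtil$. Set $\Fl^{(i)}:=f^{-1}(\Gr^{(i)})=\Fl_{G,\Divtil}\times_{\Gr_{G,\Divtil}}\Gr^{(i)}$. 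Since filtered colimits commute with fibre products in the topos of sheaves, $\Fl_{G,\Divtil}=\colim_i \Fl^{(i)}$, and the transition maps $\Fl^{(i)}\hookrightarrow \Fl^{(j)}$ for $i\leq j$ are base changes of closed immersions, hence closed immersions.

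It remains to identify each $\Fl^{(i)}$. The morphism $\Fl^{(i)}\to \Gr^{(i)}$ is an \'etale-locally trivial $(\bar{G}/\bar{B})^{\perf}$-bundle over the perfection of a projective scheme over $\Divtil$, so by the very argument used in the proof of the preceding Proposition it is representable by the perfection of a projective algebraic space over $\Divtil$: one passes to a finitely presented model, observes that the associated $\bar{G}/\bar{B}$-bundle is an algebraic space by \'etale descent and is proper over the chosen model of $\Divtil$, and notes that it admits a relatively ample line bundle, as every flag bundle does, which one twists by a relatively ample line bundle pulled back from the model of $\Gr^{(i)}$. Combining the two paragraphs yields the corollary.

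The only point requiring a word of care is the identification $\Fl_{G,\Divtil}=\colim_i \Fl^{(i)}$, i.e. that an arbitrary map $\Spec R\to \Fl_{G,\Divtil}$ with $R\in \rm{PerfAlg}_{k_E}$ factors through some $\Fl^{(i)}$; this is where boundedness enters, and it follows at once from the fact that $f$ is quasi-compact together with the corresponding boundedness on $\Gr_{G,\Divtil}$ furnished by Theorem~\ref{thm:GrGrep}. Alternatively, one can run the argument with the Schubert varieties $\Fl_{G,\Divtil,\leq w}$ in place of the $\Fl^{(i)}$, invoking the preceding Proposition and Corollary~\ref{cor:Flclosed}, after decomposing $\Fl_{G,\Divtil}$ into the connected components indexed by $\Omega$ so that the relevant index set becomes directed under the Bruhat order.
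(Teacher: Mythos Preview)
Your proposal is correct and essentially follows the paper's approach. The paper states the corollary without proof, the intended argument being the one you record as your alternative: write $\Fl_{G,\Divtil}=\bigcup_{w}\Fl_{G,\Divtil,\leq w}$ and invoke the preceding Proposition together with Corollary~\ref{cor:Flclosed}. Your primary route---pulling back the ind-presentation of $\Gr_{G,\Divtil}$ along the flag bundle $f$---amounts to running the proof of that Proposition for the pieces $f^{-1}(\Gr^{(i)})$ rather than for the $\Fl_{\leq w}$, and gives the same conclusion with no essential difference.
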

\section{Derived category for affine Grassmannian and affine flag variety}
\subsection{Derived category for affine Grassmannian}
For a scheme $S\to \Divtil$, we write
\[
\Gr_{G,S}:=\Gr_{G,\Divtil}\times_{\Divtil} S,
\]
and similarly for Schubert varieties, Schubert cells, affine flag varieties, and so on.
By \S \ref{scn:indsch}, the affine Grassmannian $\Gr_{G, S}$ can be written as an inductive limit of the perfections of projective varieties along closed immersions.
Write 
\[
\Gr_{G,S}=\lim_{\substack{\longrightarrow \\ i\in I}} X_i
\]
as an inductive limit with $X_i$ being $L^+_SG$-stable.
We define
\[
D^b_c(\Gr_{G,S},\Lambda)^{\rm{bd}}
\]
as the inductive limit of $D^b_c(X_i,\Lambda)$'s, where $D^b_c(X_i,\Lambda)$ is a category of complexes on $X$ which becomes locally constant with perfect fibers over a constructible stratification.
The action of $L^+_SG$ on $X_i$ factors through $L^{(m)}_SG:=L^+_SG/(L^+_SG)^{\geq m}$ for some $m\geq 1$.
Put
\[
D_{L^+_SG}(X_i,\Lambda):=D_{L^{(m)}_SG}(X_i,\Lambda)
\]
and define 
\[
D(\Hck_{G,S},\Lambda)^{\rm{bd}}:=D_{L^+_SG}(\Gr_{G,S},\Lambda)^{\rm{bd}}
\]
as the inductive limit of $D_{L^+_SG}(X_i,\Lambda)$'s.
Here $\Hck_{G,S}$ means the quotient stack
\[
\Hck_{G,S}=[L^+_SG\backslash \Gr_{G,S}].
\]
\subsection{Convolution products}
Let $S\to \Divtil$ be a perfect scheme over $\Divtil$.
Consider the diagram
\[
\xymatrix{
[L^+_SG \backslash L_SG\times^{L^+_SG}\Gr_{G,S}]\ar[r]^-{m}\ar[d]_{p}&\Hck_{G,S}\\
\Hck_{G,S}\times \Hck_{G,S}, 
}
\]
where $p$ is a map induced from the identity on $L_SG\times L_SG$, and $m$ is induced from the multiplication map $L_SG\times L_SG\to L_SG$.
For $A,B\in D(\Hck_{G,S},\Lambda)^{\bd}$, we can define the convolution product $A\star B\in D(\Hck_{G,S},\Lambda)^{\bd}$ of $A$ and $B$ by 
\begin{align*}
A\star B=Rm_!p^*(A\boxtimes B).
\end{align*}
We can prove the following results by a standard argument:
\begin{lemm}\label{lemm:convass}
There exists a canonical equivalence
\[
(A\star B)\star C\cong A\star (B\star C)
\]
which makes $D(\Hck_{G,S},\Lambda)^{\bd}$ monoidal triangulated category.
\end{lemm}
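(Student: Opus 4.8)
The plan is to run the classical argument for associativity of convolution, adapted to the ind-(perfect scheme) setting over $\Divtil$. All the constructions are $L^+_SG$-equivariant and, for complexes $A,B,C$ supported on fixed $L^+_SG$-stable pieces $X_i$, factor through finite-level quotient stacks $[L^{(m)}_SG\backslash X_i]$; so everything below is a statement about (perfections of) quotient stacks of projective $k_E$-varieties, where the six-functor formalism with coefficients in $\Lambda$ is available and insensitive to perfection, since \'etale cohomology is. The triangulated structure is then automatic: each $D_{L^+_SG}(X_i,\Lambda)$ is triangulated, the transition functors defining the colimit are exact, and $\star$ is exact in each variable, being a composite of the exact functors $p^{*}$, $\boxtimes$ and $Rm_!$.

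First I would introduce the threefold convolution space
\[
\Hck^{(3)}_{G,S}:=[L^+_SG\backslash L_SG\times^{L^+_SG} L_SG\times^{L^+_SG}\Gr_{G,S}],
\]
together with its projection $p^{(3)}$ to $\Hck_{G,S}\times\Hck_{G,S}\times\Hck_{G,S}$ and its total multiplication $m^{(3)}\colon\Hck^{(3)}_{G,S}\to\Hck_{G,S}$. There are two natural factorizations of $m^{(3)}$ through the twofold space $[L^+_SG\backslash L_SG\times^{L^+_SG}\Gr_{G,S}]$, namely multiplying the first two copies, respectively the last two; in each case the resulting square over $\Hck_{G,S}^{\times 3}$ and $\Hck_{G,S}^{\times 2}$ is Cartesian. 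A diagram chase using (i) the K\"unneth formula, i.e.\ that $\boxtimes$ commutes with $R(-)_!$ in each variable, (ii) proper base change in these Cartesian squares, and (iii) compatibility of $R(-)_!$ with composition, identifies both $(A\star B)\star C$ and $A\star(B\star C)$ canonically with $Rm^{(3)}_!\,(p^{(3)})^{*}(A\boxtimes B\boxtimes C)$, yielding the associativity equivalence. The coherence (pentagon) axiom holds because every isomorphism involved is assembled from the K\"unneth, base-change and functoriality isomorphisms, each of which is itself coherent. For the monoidal unit one takes the skyscraper $\Lambda$-sheaf $\delta_0$ on the base-point section $[0]\colon S\hookrightarrow\Gr_{G,S}$; convolving with it is computed on the identity component and produces the unit constraints and the triangle axiom by the same formal manipulations.

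The step I expect to be the main obstacle --- and the only place where the mixed-characteristic setting intervenes --- is checking that $Rm_!$ genuinely behaves like pushforward along a proper map, so that proper base change and the projection formula are legitimate. This reduces to the assertion that the multiplication maps (and $m^{(3)}$) are ind-proper: on bounded complexes the relevant supports are convolutions of Schubert varieties, which by \S\ref{scn:indsch} are perfections of projective $\Divtil$-schemes and map properly onto a single Schubert variety $\Gr_{G,S,\leq\mu}$, the maps being built from the perfectly proper Demazure-type bundles of Proposition \ref{prop:Demsmproj} and the representability results of Theorem \ref{thm:Grrep} and Theorem \ref{thm:GrGrep}. Granting this, the argument is identical to the equal-characteristic one, so it is safe to say that the lemma follows by a standard argument.
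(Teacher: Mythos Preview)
Your proposal is correct and spells out precisely the ``standard argument'' the paper merely alludes to (the paper gives no proof of this lemma beyond that phrase). The threefold convolution space, K\"unneth, proper base change, and the ind-properness of $m$ deduced from the representability results of \S\ref{scn:indsch} are exactly the ingredients one needs, and your treatment of the unit and coherence is the expected one.
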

\begin{lemm}\label{lemm:convpullback}
Let $S'\to S\to \Divtil$ be morphisms of perfect schemes.
Let
\[
f^*\colon D(\Hck_{G,S},\Lambda)^{\bd}\to D(\Hck_{G,S'},\Lambda)^{\bd}
\]
be a pullback functor.
Then there exists a canonical isomorphism
\[
f^*(A\star B)\cong f^*A\star f^*B.
\]
\end{lemm}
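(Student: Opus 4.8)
The plan is to deduce this from base change in the six-functor formalism for étale $\Lambda$-sheaves, applied to the cartesian squares obtained by base-changing the convolution diagram along $S'\to S$. First I would record that every space occurring in the definition of $\star$ is compatible with this base change: one has $\Gr_{G,S'}\cong\Gr_{G,S}\times_S S'$ and $L^{\pm}_{S'}G\cong L^{\pm}_S G\times_S S'$, hence also
\[
\bigl[L^+_{S'}G\backslash L_{S'}G\times^{L^+_{S'}G}\Gr_{G,S'}\bigr]\cong\bigl[L^+_S G\backslash L_S G\times^{L^+_S G}\Gr_{G,S}\bigr]\times_S S',
\]
and the maps $m'$, $p'$ over $S'$ are the pullbacks of $m$, $p$ over $S$. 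Write $\bar f$ for the morphism $\Hck_{G,S'}\to\Hck_{G,S}$ induced by $S'\to S$ (and, by the abuse already present in the lemma, still $f^*$ for the induced pullback functors, both on $D(\Hck_{G,S},\Lambda)^{\bd}$ and on $D(\Hck_{G,S}\times\Hck_{G,S},\Lambda)$), and $\wt f$ for the induced morphism of convolution stacks. This produces a cube all of whose side faces are cartesian, linking the two convolution diagrams.

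Second, I would string together the standard compatibilities. The functor $Rm_!$ commutes with arbitrary base change, so $\bar f^{*}\circ Rm_!\cong Rm'_!\circ\wt f^{*}$; in fact, on each $L^+_S G$-stable bounded piece $X_i$ the morphism $m$ restricts to a perfectly proper map, since the relevant twisted products of Schubert varieties are perfections of projective schemes over the base by Section~\ref{scn:indsch}, so there $Rm_!=Rm_*$ and one may invoke proper base change directly. Functoriality of $*$-pullback along the cartesian square for $p$ gives $\wt f^{*}\circ p^{*}\cong p'^{*}\circ(\bar f\times\bar f)^{*}$, and the Künneth isomorphism for $*$-pullbacks gives $(\bar f\times\bar f)^{*}(A\boxtimes B)\cong\bar f^{*}A\boxtimes\bar f^{*}B$. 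Combining these yields the canonical, natural chain
\[
f^*(A\star B)=\bar f^{*}Rm_!\,p^{*}(A\boxtimes B)\cong Rm'_!\,\wt f^{*}p^{*}(A\boxtimes B)\cong Rm'_!\,p'^{*}(\bar f^{*}A\boxtimes\bar f^{*}B)=f^*A\star f^*B.
\]

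The points requiring care, rather than a genuine obstacle, are as follows. One must check that the displayed isomorphisms are compatible with the transition maps along the closed immersions $X_i\hookrightarrow X_j$ presenting $D(\Hck_{G,S},\Lambda)^{\bd}$ as a colimit, and that $f^*$ preserves the boundedness condition; both are immediate, as closed immersions and $f^*$ interact with the six operations in the usual way. More substantively, $Rm_!$ and its base change isomorphism must be interpreted on the quotient stacks: this is handled by descent along the $L^+_S G$-torsors (which factor through some $L^{(m)}_S G$), checking that the descent datum for $Rm_!p^{*}(A\boxtimes B)$ pulls back under $\bar f^{*}$ to the one over $S'$, using that the smooth-descent isomorphisms are themselves natural in the base. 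This verification of compatibility of equivariant proper base change with the stacky presentation is the only step where some genuine bookkeeping is needed; once it is in place, $f^*$ is moreover a monoidal functor for the products of Lemma~\ref{lemm:convass}.
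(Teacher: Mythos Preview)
Your argument is correct and is precisely the ``standard argument'' the paper alludes to; in fact the paper gives no proof of this lemma at all, merely stating that it and Lemma~\ref{lemm:convass} follow by a standard argument. Your unpacking of this via proper base change for $Rm_!$ along the cartesian square of convolution stacks, together with the compatibility of $p^*$ and $\boxtimes$ with pullback, is exactly what is meant.
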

\subsection{Universal locally acyclic sheaf}
For a finitely presented and separated map of schemes $f\colon X\to S$, there is a notion of the full subcategory of universally locally acyclic (ULA) sheaves
\[
D^{\ULA/S}(X,\Lambda)\subset D_c^b(X,\Lambda),
\]
see \cite[Definition 3.2]{HS} for a precise definition.
If $A\in D^{\ULA/S}(X,\Lambda)$, we say that $A$ is $f$-ULA or ULA over $S$.

This notion of universal local acyclicity has the following basic properties:
\begin{lemm}(\cite{HS})
Let $X$ be a scheme.
$A\in D^b_c(X,\Lambda)$ is $\rm{id}_X$-ULA if and only if $A$ is locally constant with perfect fibers.
\end{lemm}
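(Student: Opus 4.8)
The plan is to prove the two implications separately, after reducing everything to a statement about the cospecialization maps of the stalks of $A$ — which is what $f$-universal local acyclicity amounts to when $f=\mathrm{id}_X$. First I would recall that for a finitely presented separated $f\colon X\to S$ and $A\in D^b_c(X,\Lambda)$, local acyclicity of $A$ over $S$ at a geometric point $\bar x$ lying over $\bar s=f(\bar x)$ asks that for every generization $\bar t$ of $\bar s$ the natural map
\[
R\Gamma\bigl(X_{(\bar x)},A\bigr)\longrightarrow R\Gamma\bigl(X_{(\bar x)}\times_{S_{(\bar s)}}\bar t,\,A\bigr)
\]
is an isomorphism, $X_{(\bar x)}$ denoting the strict localization, and universal local acyclicity is this condition after arbitrary base change $S'\to S$. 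When $f=\mathrm{id}_X$ one has $S_{(\bar s)}=X_{(\bar x)}$, the fibre product becomes $X_{(\bar t)}$, and $R\Gamma(X_{(\bar x)},A)=A_{\bar x}$, $R\Gamma(X_{(\bar t)},A)=A_{\bar t}$; so $\mathrm{id}_X$-local acyclicity of $A$ is exactly the assertion that every cospecialization map $A_{\bar x}\to A_{\bar t}$ is an isomorphism, and the universal version imposes the same after any base change along $X'\to X$.

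For the ``if'' direction I would assume $A$ is locally constant with perfect fibres. Since universal local acyclicity can be checked étale-locally on the source and is unaffected by passing to a finitely presented model, one may assume $A\cong\underline M$ for a perfect complex $M$ of $\Lambda$-modules; then for any $X'\to X$ all cospecialization maps of $\underline M_{X'}$ are the identity of $M$, so $A$ is $\mathrm{id}_X$-ULA. For the converse I would assume $A$ is $\mathrm{id}_X$-ULA: by the reformulation above all cospecialization maps of $A$ are isomorphisms, and since the stalk functor is exact the same holds for each cohomology sheaf $\mathcal H^n(A)$, so each $\mathcal H^n(A)$ is a constructible sheaf whose cospecialization maps are all isomorphisms, hence locally constant with finitely generated (perfect) stalks. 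As $A$ is bounded, $A$ itself is locally constant with perfect fibres.

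The one genuinely non-formal input is the classical fact that a constructible étale sheaf all of whose cospecialization maps are isomorphisms is locally constant; in the perfect-scheme (resp.\ small v-stack) framework of the paper I would invoke this from \cite{HS}, or deduce it by reducing to a finitely presented model and quoting the statement for ordinary schemes. Everything else — exactness of stalks, the identification of $R\Gamma$ of a strict localization with the stalk, the étale-local nature and model-independence of universal local acyclicity, and reassembling a bounded complex from its cohomology sheaves — is routine.
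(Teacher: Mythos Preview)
Your argument is correct in substance, but note that the paper itself gives no proof at all: it simply cites \cite[Lemma~4.3]{HS}. So you are supplying what the paper omits, via the classical cospecialization-map description of local acyclicity. Two small points are worth flagging. First, when $f=\mathrm{id}_X$ the Milnor fibre $X_{(\bar x)}\times_{S_{(\bar s)}}\bar t$ is the geometric point $\bar t$ itself, not $X_{(\bar t)}$; this is harmless since its $R\Gamma$ is $A_{\bar t}$ either way. Second, and more substantively, the cospecialization condition alone does not force the stalks of $A$ to be \emph{perfect} $\Lambda$-complexes (over $\Lambda=\mathbb{Z}/\ell^2$, the constant sheaf $\underline{\mathbb{Z}/\ell}$ on a point has no nontrivial specializations yet has non-perfect stalk). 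This is not actually a gap in the present setting, because the paper's $D^b_c$ is by definition the category of complexes that are locally constant with perfect fibres on a constructible stratification, so perfectness of stalks is already granted and only local constancy on all of $X$ remains --- which your cospecialization argument handles. Your parenthetical ``(perfect)'' attached to the individual cohomology sheaves $\mathcal{H}^n(A)$ is imprecise, since these need not have perfect stalks even when $A$ does, but the conclusion about $A$ itself stands.
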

\begin{proof}
See, for example, the statement of \cite[Lemma 4.3]{HS}.
\end{proof}
\begin{lemm}(\cite{HS})\label{lemm:ULApropsm}
Let $X\overset{g}{\to} Y\to S$ be finitely presented separated maps of schemes.
\begin{enumerate}
\item If $g$ is proper and $A\in D^{\ULA/S}(X,\Lambda)$, then $Rg_*A$ is in $D^{\ULA/S}(Y,\Lambda)$.
\item If $g$ is smooth and $A\in D^{\ULA/S}(Y,\Lambda)$, then $g^*A$ is in $D^{\ULA/S}(X,\Lambda)$.
\end{enumerate}
\end{lemm}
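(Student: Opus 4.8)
This is a standard stability property of universal local acyclicity, and the reference \cite{HS} establishes it; here I sketch the argument I would follow. I would first fix the characterization of $f$-ULA objects that makes the two operations transparent. For a finitely presented separated $f\colon X\to S$, I take $A\in D^b_c(X,\Lambda)$ to be $f$-ULA when the relative duality functor $\mathbb{D}_{X/S}(-)=R\mathcal{H}om(-,f^{!}\Lambda)$ has the following two properties: biduality $A\xrightarrow{\ \sim\ }\mathbb{D}_{X/S}\mathbb{D}_{X/S}(A)$ holds, and for every base change $S'\to S$, with $X'=X\times_S S'$ and $g'\colon X'\to X$, the base-change map $g'^{*}\mathbb{D}_{X/S}(A)\to\mathbb{D}_{X'/S'}(g'^{*}A)$ is an isomorphism. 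Equivalently, in nearby-cycles terms, for every geometric point $\bar x\to X$ over $\bar s=f(\bar x)$ and every geometric point $\bar t$ of the strict henselization $S_{(\bar s)}$, the local-acyclicity map $A_{\bar x}\to R\Gamma\bigl(X_{(\bar x)}\times_{S_{(\bar s)}}\bar t,A\bigr)$ is an isomorphism, stably under all base change $S'\to S$. I would also use the preceding lemma in two ways: first, $\Lambda_X$ is $g$-ULA for any smooth $g\colon X\to Y$ (smooth morphisms are locally acyclic and $g^{!}\Lambda$ is an invertible shift); second, the transitivity of the ULA condition, namely that if $C$ on $X$ is $g$-ULA for $g\colon X\to Y$ and $D$ on $Y$ is $h$-ULA for $h\colon Y\to S$, then $C\otimes g^{*}D$ is $(h\circ g)$-ULA.

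For (i), write $f=h\circ g$ with $h\colon Y\to S$ and $g$ proper. Then $Rg_*=Rg_!$ preserves $D^b_c$ (proper case of the finiteness theorem), and proper Verdier duality gives a canonical isomorphism $Rg_*\mathbb{D}_{X/S}(A)\cong\mathbb{D}_{Y/S}(Rg_*A)$ for any $A$, using $g^{!}h^{!}\Lambda=f^{!}\Lambda$ and $R\mathcal{H}om(Rg_!A,-)\cong Rg_*R\mathcal{H}om(A,g^{!}-)$. Since $g$ is proper, $Rg_*$ commutes with arbitrary base change $S'\to S$; combining this with the base-change invariance of $\mathbb{D}_{X/S}(A)$ shows $\mathbb{D}_{Y/S}(Rg_*A)$ commutes with base change, and applying the displayed duality isomorphism twice, together with biduality for $A$, gives biduality for $Rg_*A$. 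Hence $Rg_*A$ is $(Y\to S)$-ULA. In nearby-cycles language one argues directly: proper base change computes the stalks and local sections of $Rg_*A$ as proper pushforwards along the fibres of $g$, reducing the isomorphism for $Rg_*A$ to the one for $A$.

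For (ii), let $g\colon X\to Y$ be smooth, say of relative dimension $d$, and $A$ be $(Y\to S)$-ULA. Then $g^{*}A\in D^b_c(X,\Lambda)$, $g^{!}\cong g^{*}[2d](d)$, and $\Lambda_X$ is $g$-ULA by the remark above, so the transitivity property applied with $C=\Lambda_X$ and $D=A$ gives that $g^{*}A=\Lambda_X\otimes g^{*}A$ is $(X\to S)$-ULA. Concretely: for $\bar x\to X$ over $\bar y=g(\bar x)$, smoothness of $g$ and smooth base change identify $R\Gamma\bigl(X_{(\bar x)}\times_{S_{(\bar s)}}\bar t,g^{*}A\bigr)$ with $R\Gamma\bigl(Y_{(\bar y)}\times_{S_{(\bar s)}}\bar t,A\bigr)$ compatibly with $(g^{*}A)_{\bar x}\cong A_{\bar y}$, and the ULA property of $A$ concludes; each identity used is stable under $S'\to S$.

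The main obstacle is the bookkeeping hidden in the word \emph{universal}: every identity invoked ($Rg_*\mathbb{D}=\mathbb{D}Rg_*$, $g^{!}\cong g^{*}[2d](d)$, proper and smooth base change) must be checked to be compatible with arbitrary $S'\to S$, so that it is genuinely the universal local acyclicity, and not merely local acyclicity over $S$ itself, that is propagated; the constructibility of $Rg_*A$ and $g^{*}A$ is then automatic from the finite presentation of $g$ together with the finiteness theorems. Since all of this is carried out in \cite{HS}, in the end I would simply cite it.
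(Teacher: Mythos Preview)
Your sketch is correct and is essentially the standard argument; the paper itself gives no independent proof but simply refers to the paragraph after \cite[Proposition 3.3]{HS}, which is exactly what you conclude you would do. There is nothing to add.
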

\begin{proof}
See the paragraph after \cite[Proposition 3.3]{HS}
\end{proof}
For a morphism $X\to S$ of perfect schemes and $A\in D^b_c(X,\Lambda)$, we say $A$ is ULA over $S$ if $A$ is ULA over $S'$ for some model $X'\to S'$ of $X\to S$, noting that $D^b_c(X,\Lambda)=D^b_c(X',\Lambda)$.
\begin{prop}\label{prop:jwLULA}
Let 
\[
j_w\colon \Fl_{G,\Divtil, w}\hookrightarrow \Fl_{G,\Divtil \leq w}
\]
be an inclusion.
Then $j_{w!}\Lambda\in D(\Fl_{G,\Divtil \leq w},\Lambda)$ is ULA over $\Divtil$.
\end{prop}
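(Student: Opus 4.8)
The plan is to follow the strategy of \cite[\S VI.6]{FS}, reducing the claim to the Demazure resolution and its $\mathbb{P}^1$-tower structure over $\Divtil$, and then arguing by induction on $\ell(w)$ (we may assume $G$ split). Fix a reduced decomposition $\dot w = s_{i_1}\cdots s_{i_l}\omega$, let $\Psi\colon \rm{Dem}_{G,\Divtil,\dot w}\to \Fl_{G,\Divtil,\leq w}$ be the associated Demazure resolution, and set $U:=\Psi^{-1}(\Fl_{G,\Divtil, w})$, an open subscheme of $\rm{Dem}_{G,\Divtil,\dot w}$ on which $\Psi$ restricts to an isomorphism onto the open Schubert cell $\Fl_{G,\Divtil,w}$. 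Since $\Psi$ is perfectly proper and $\Psi\circ j_U = j_w\circ (\Psi|_U)$ with $\Psi|_U$ an isomorphism, functoriality of $R(-)_!$ together with properness gives $j_{w!}\Lambda \cong R\Psi_!(j_U)_!\Lambda \cong R\Psi_*(j_U)_!\Lambda$. By Lemma \ref{lemm:ULApropsm}(i), applied after choosing a finitely presented model of $\Psi$ over a model of $\Divtil\cong \GL_{2,k_E}^{\perf}$, it therefore suffices to show that $(j_U)_!\Lambda$ is ULA over $\Divtil$ on $\rm{Dem}_{G,\Divtil,\dot w}$.

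This I would prove by induction on $l=\ell(w)$. If $l=0$ then $\rm{Dem}_{G,\Divtil,\dot w}=\Divtil$ and $U=\rm{Dem}_{G,\Divtil,\dot w}$, so $(j_U)_!\Lambda=\Lambda_{\Divtil}$ is $\mathrm{id}$-ULA, hence ULA over $\Divtil$. For the inductive step, write $\dot w'=s_{i_1}\cdots s_{i_{l-1}}\omega$ (a reduced word of length $l-1$), let $\pi\colon \rm{Dem}_{G,\Divtil,\dot w}\to \rm{Dem}_{G,\Divtil,\dot w'}$ be the map forgetting the last factor, which is a perfect $\mathbb{P}^1$-bundle equipped with the base-point section $\infty\colon \rm{Dem}_{G,\Divtil,\dot w'}\hookrightarrow \rm{Dem}_{G,\Divtil,\dot w}$, and let $\Psi'$ be the Demazure resolution for $\dot w'$ with $U':=\Psi'^{-1}(\Fl_{G,\Divtil,w'})$ its open cell. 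The geometric input I need is that, in terms of the standard cell decomposition of Bott--Samelson type varieties indexed by subwords, one has $\pi^{-1}(U') = U \sqcup \infty(U')$; in particular $V:=\pi^{-1}(U')$ is open in $\rm{Dem}_{G,\Divtil,\dot w}$, $U=V\setminus \infty(U')$, and $\infty(U')$ is closed in $V$. This is the usual statement that the Demazure resolution stays in the ``moving'' part of each $\mathbb{P}^1$ over the big cell, and it follows from the fibrewise analysis underlying the structure of the Demazure resolution established above.

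Granting this, consider the open/closed triangle on $\rm{Dem}_{G,\Divtil,\dot w}$ attached to the open subscheme $U\subset V$ and its closed complement $\infty(U')$, pushed forward along the open immersion $V\hookrightarrow \rm{Dem}_{G,\Divtil,\dot w}$:
\[
(j_U)_!\Lambda \longrightarrow (j_V)_!\Lambda \longrightarrow (j_{\infty(U')})_!\Lambda \xrightarrow{+1}.
\]
Here $(j_V)_!\Lambda\cong \pi^*(j_{U'})_!\Lambda$ by base change for lower-shriek along the Cartesian square with sides $\pi$ and $j_{U'}$, so it is ULA over $\Divtil$ by the inductive hypothesis and Lemma \ref{lemm:ULApropsm}(ii), since $\pi$ is perfectly smooth. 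Likewise, $\infty$ being a closed immersion and $j_{\infty(U')}$ agreeing with $\infty\circ j_{U'}$ under $\infty(U')\cong U'$, we get $(j_{\infty(U')})_!\Lambda\cong \infty_*(j_{U'})_!\Lambda$, which is ULA over $\Divtil$ by the inductive hypothesis and Lemma \ref{lemm:ULApropsm}(i). Since the sheaves ULA over $\Divtil$ form a triangulated subcategory (stable under cones), the triangle shows that $(j_U)_!\Lambda$ is ULA over $\Divtil$, completing the induction and the proof.

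Throughout, the passage between the relevant ind-perfect schemes and finitely presented models is handled exactly as in the definition of ULA-ness over a perfect base recalled above (and as in the model formalism of \cite{BS} for $\Divtil$): concretely I would fix once and for all compatible models of $\rm{Dem}_{G,\Divtil,\dot w}$, $\rm{Dem}_{G,\Divtil,\dot w'}$, $\Fl_{G,\Divtil,\leq w}$ and of the maps $\Psi$, $\pi$, $\infty$, $j_U$, $j_{U'}$, and run the entire argument there. The main obstacle is not the sheaf-theoretic bookkeeping, which is a formal consequence of the stability properties in Lemma \ref{lemm:ULApropsm}, but the geometric identification $\pi^{-1}(U')=U\sqcup\infty(U')$ in the inductive step --- i.e. checking that the preimage of the open Schubert cell inside the $\mathbb{P}^1$-tower is the big cell minus the base-point section; this is standard for Bott--Samelson resolutions but must be verified with care in the present $\Divtil$-relative, perfected setting.
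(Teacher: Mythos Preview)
Your proposal is correct and follows the same initial reduction as the paper: pass to the Demazure resolution via $R\Psi_*$ and show instead that the extension by zero of $\Lambda$ from the open cell of $\mathrm{Dem}_{G,\Divtil,\dot w}$ is ULA over $\Divtil$. From that point on the two arguments diverge. The paper resolves $\tilde j_{w!}\Lambda$ directly in terms of the pushforwards $i_{\dot w',\dot w *}\Lambda$ along the closed immersions $\mathrm{Dem}_{\dot w'}\hookrightarrow \mathrm{Dem}_{\dot w}$ indexed by \emph{all} subwords $\dot w'$ of $\dot w$ (an inclusion--exclusion/Koszul style resolution), and then observes that each $\mathrm{Dem}_{\dot w'}\to\Divtil$ is perfectly smooth so Lemma~\ref{lemm:ULApropsm}(ii) makes each term ULA. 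You instead run an induction on $\ell(w)$ using only the last $\mathbb P^1$-factor, reducing to a single distinguished triangle whose two other terms are $\pi^*(j_{U'})_!\Lambda$ and $\infty_*(j_{U'})_!\Lambda$.

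Both arguments are standard for Bott--Samelson type varieties. The paper's version is shorter and avoids the explicit verification of $\pi^{-1}(U')=U\sqcup\infty(U')$ that you flag as the main geometric input; on the other hand, your induction isolates exactly which triangles are needed and only ever invokes smoothness of a single $\mathbb P^1$-bundle and a single section at each step, so it is arguably more self-contained. Note that the identification $\pi^{-1}(U')=U\sqcup\infty(U')$ you need is immediate from the definitions: over $U'$ the bundle $\cal P_{i_l}/\cal I\cong(\mathbb P^1)^{\perf}$ decomposes into the open $\mathbb A^1$-cell (giving $U$) and the base point (giving $\infty(U')$), and this decomposition is visibly compatible with base change to $\Divtil$, so there is no real obstacle there.
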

Here for a complex $A$ on an algebraic space $X$, we call $A$ ULA if its pullback to its atlas is ULA.
\begin{proof}
The proof is the same as \cite[Proposition VI.5.7]{FS}.
Consider the inclusion
\[
\wt{j}_w\colon \Fl_{w}\hookrightarrow \rm{Dem}_{\dot{w}}.
\]
By Lemma \ref{lemm:ULApropsm} (i), it suffices to show that $\wt{j}_{w!}\Lambda$ is ULA.
$\wt{j}_{w!}\Lambda$ can be resolved in terms of $i_{\dot{w}',\dot{w}*}\Lambda$ where 
\[
i_{\dot{w}',\dot{w}*}\colon \rm{Dem}_{\dot{w}'}\hookrightarrow \rm{Dem}_{\dot{w}}
\]
is a closed immersion corresponding to a subword $\dot{w}'$ of $\dot{w}$.
By the perfect smoothness of $\rm{Dem}_{\dot{w}'}\to \Divtil$ and by Lemma \ref{lemm:ULApropsm} (ii), the sheaf $i_{\dot{w}',\dot{w}*}\Lambda$ is ULA.
This gives the result.
\end{proof}
\begin{defi}
For 
\[
A\in D(\Hck_{G,S},\Lambda)^{\rm{bd}}, 
\]
we say $A$ is universally locally acyclic (ULA) over $S$ if its pullback to $D^b_c(\Gr_{G,S},\Lambda)^{\rm{bd}}$ is (when restricted to its support) universally locally acyclic (ULA) over $S$.
Write
\[
D^{\ULA}(\Hck_{G,S},\Lambda) \subset D(\Hck_{G,S},\Lambda)^{\rm{bd}}
\]
for the full subcategory of ULA sheaves.
\end{defi}
\begin{prop}(cf.\cite[Proposition VI.6.5]{FS})\label{prop:charaULA}
Let $S\to \Divtil$ be a perfect scheme over $\Divtil$.
Assume that $G$ is split.
An object $A\in D(\Hck_{G,S},\Lambda)^{\rm{bd}}$ is ULA over $S$ if and only if
\[
[\mu]^*A\in D(S,\Lambda)
\]
is locally constant with perfect fibers for all $\mu\in \bb{X}_*^+(T)$.
\end{prop}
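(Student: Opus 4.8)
The plan is to follow \cite[Proposition VI.6.5]{FS}: exploit the $L^+_SG$-equivariance to convert the global ULA condition on $\Hck_{G,S}$ into the pointwise conditions along the sections $[\mu]$. Throughout, write $\wt A\in D^b_c(\Gr_{G,S},\Lambda)^{\bd}$ for the pullback of $A$; it carries an $L^+_SG$-equivariant structure, the action on any relevant quasi-compact closed piece factoring through $L^{(m)}_SG$ for suitable $m$, and $\wt A$ is supported on an $L^+_SG$-stable finite union of Schubert varieties. All schemes in sight are perfections of finitely presented $S$-schemes, and ``ULA over $S$'' is read off a model as in the excerpt; since the morphisms $L^{(m)}_SG\to S$ and the orbit maps of Proposition \ref{prop:LGmu} are perfectly smooth, one may choose smooth models, so that Lemma \ref{lemm:ULApropsm} together with smooth descent and codescent for ULA (from the formalism of \cite{HS}) applies on models.

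The key local input is the following. Fix $\nu$. By Proposition \ref{prop:LGmu} the orbit map
\[
a\colon L^{(m)}_SG\longrightarrow \Gr_{G,S,\nu},\qquad g\longmapsto g\cdot[\nu],
\]
exhibits $\Gr_{G,S,\nu}$ as $L^{(m)}_SG$ modulo the stabilizer of $[\nu]$, and is in particular perfectly smooth and surjective. Writing $q\colon L^{(m)}_SG\to S$ for the structure map and $e\colon S\to L^{(m)}_SG$ for the unit section, one has $a\circ e=[\nu]$ and $q\circ e=\mathrm{id}_S$; restricting the equivariance isomorphism of $\wt A$ along $g\mapsto(g,[\nu])$ yields a canonical isomorphism
\[
a^*\bigl(\wt A|_{\Gr_{G,S,\nu}}\bigr)\;\cong\;q^*\bigl([\nu]^*A\bigr).
\]
As $a$ is perfectly smooth surjective, Lemma \ref{lemm:ULApropsm}(ii) and smooth descent show $\wt A|_{\Gr_{G,S,\nu}}$ is ULA over $S$ iff $q^*([\nu]^*A)$ is; and as $q$ is perfectly smooth surjective with section $e$, the latter holds iff $[\nu]^*A$ is ULA over $S$, i.e.\ locally constant with perfect fibers. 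This already settles the ``only if'' direction: if $A$ is ULA over $S$, then restricting along the open (hence perfectly smooth) immersion $\Gr_{G,S,\mu}\hookrightarrow\Gr_{G,S,\le\mu}$ shows $\wt A|_{\Gr_{G,S,\mu}}$ is ULA over $S$, whence $[\mu]^*A$ is locally constant with perfect fibers, for every $\mu$.

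For the ``if'' direction, assume every $[\mu]^*A$ is locally constant with perfect fibers. Fix an $L^+_SG$-stable closed $X\subseteq\Gr_{G,S}$ containing $\supp\wt A$ that is a finite union of Schubert varieties; it is stratified by finitely many Schubert cells, which we order $Z_1,\dots,Z_l$ so that $X_{\le k}:=Z_1\cup\dots\cup Z_k$ is closed in $X$ for each $k$. I would show by induction on $k$ that $\wt A|_{X_{\le k}}$ is ULA over $S$. In the inductive step, $Z_k$ is open in $X_{\le k}$ with closed complement $X_{\le k-1}$; write $j,i$ for the two immersions. By the key local input, $j^*\wt A=\wt A|_{Z_k}$ is ULA over $S$, hence so is $j_!j^*\wt A$ since $j_!$ preserves ULA over $S$ along open immersions (part of the formalism of \cite{HS}); by the induction hypothesis $i^*\wt A$, hence $i_*i^*\wt A$ (Lemma \ref{lemm:ULApropsm}(i), closed immersions being proper), is ULA over $S$; and then the distinguished triangle
\[
j_!j^*\wt A\longrightarrow \wt A|_{X_{\le k}}\longrightarrow i_*i^*\wt A\longrightarrow
\]
forces $\wt A|_{X_{\le k}}$ to be ULA over $S$, as ULA sheaves over $S$ form a triangulated subcategory. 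The base case $k=1$ is exactly the key local input for the closed stratum $Z_1$. Taking $k=l$ shows $\wt A|_X$, and hence $A$, is ULA over $S$.

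I expect the main obstacle to be the ``if'' direction, and within it the passage from the individual Schubert cells to the whole stratified piece: the equivariance computation disposes of each cell cleanly, but assembling the ULA property requires the stability of ULA under extension by zero along open immersions together with the dévissage above, which goes beyond the two facts recorded in Lemma \ref{lemm:ULApropsm} and must be imported from the general theory of universally locally acyclic sheaves. A secondary, bookkeeping, point is that everything happens at a finite truncation level $L^{(m)}_SG$ and on finitely presented models of the perfect schemes in play, so one must check the models can be chosen compatibly with the group action and the Schubert stratification.
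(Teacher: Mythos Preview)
Your ``only if'' argument has a gap: the immersion $\Gr_{G,S,\mu}\hookrightarrow\supp\wt A$ is only \emph{locally} closed, not open, unless $\mu$ is maximal in the support; restriction along closed immersions does not preserve ULA, so you cannot conclude directly that $\wt A|_{\Gr_{G,S,\mu}}$ is ULA for arbitrary $\mu$. The paper handles this by induction on the support, peeling off a maximal cell via the cone of $j_{\mu!}j_\mu^*A\to A$; but to know this cone is again ULA one must already know $j_{\mu!}j_\mu^*A$ is ULA, which uses the ``if'' direction proved first.

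The more serious gap is in your ``if'' direction, and you in fact name it in your final paragraph: the assertion that ``$j_!$ preserves ULA over $S$ along open immersions'' is \emph{false} in general and cannot be imported from \cite{HS}. Counterexample: take $S=X=\bb{A}^1$ with $X\to S$ the identity and $j\colon\bb{G}_m\hookrightarrow\bb{A}^1$; then $\Lambda$ on $\bb{G}_m$ is ULA over $S$, but $j_!\Lambda$ is not locally constant at $0$, hence not ULA over $S$. That $j_{\mu!}\Lambda$ \emph{is} ULA over $\Divtil$ is a genuine geometric fact, and is exactly what the paper supplies via Proposition~\ref{prop:jwLULA}: one reduces to $S=\Divtil$ and $A=j_{\mu!}\Lambda$, passes to the affine flag variety using Proposition~\ref{prop:mapFlGr} and Lemma~\ref{lemm:ULApropsm}(i), and there resolves $\wt{j}_{w!}\Lambda$ by the sheaves $i_{\dot w',\dot w*}\Lambda$ for subwords $\dot w'$, each of which is ULA because $\rm{Dem}_{\dot w'}\to\Divtil$ is perfectly smooth. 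Without this Demazure-resolution input your d\'evissage does not get started.
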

\begin{proof}
The proof is the same as \cite[Proposition VI.6.5]{FS}.

First, we claim that if all $[\mu]^*A$ are locally constant with perfect fibers, then $A$ is ULA over $S$.
It reduces to the case where $S=\Divtil$ and $A=j_{\mu!}\Lambda$ where $j_{\mu}\colon \Gr_{G,\mu}\to \Gr_G$ is the inclusion.
By Proposition \ref{prop:mapFlGr} and Lemma \ref{lemm:ULApropsm} (i), it reduces to the case of the affine flag variety, which follows from Proposition \ref{prop:jwLULA}.

The converse follows by induction on the support of $A$.
In fact, the restriction of $A$ to a maximal Schubert cell $\Gr_{G,\mu}$ where $A$ is nonzero is ULA, and the stack
\[
[L^+G \backslash \Gr_{G,\mu}]
\]
can be seen as the classifying space of a smooth group over $\Divtil$, modulo pro-unipotent group.
Thus by Lemma \ref{lemm:ULApropsm} (ii), $[\mu]^*A$ is ULA, i.e. locally constant with perfect fibers.
Replacing $A$ by the cone of $j_{\mu!}j_\mu^* A\to A$ and using induction hypothesis, the result follows.
\end{proof}
\begin{cor}(cf.\cite[Corollary VI.6.6]{FS})\label{cor:ULAstabcompati}
Let $S\to \Divtil$ be a perfect scheme over $\Divtil$.
The full subcategory 
\[
D^{\ULA}(\Hck_{G,S},\Lambda)
\]
is stable under Verdier duality and $-\otimes_{\Lambda}^{\bb{L}}-$, $R\HHom_{\Lambda}(-,-)$, $j_!j^*$, $Rj_*j^*$, $j_!Rj^!$ and $Rj_*Rj^!$, where $j$ is a locally closed immersion of a Schubert cell.
Moreover, all of these operations commute with pullbacks in $S$.
\end{cor}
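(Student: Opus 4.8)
The plan is to deduce everything from the numerical criterion of Proposition~\ref{prop:charaULA}: for split $G$, an object $A\in D(\Hck_{G,S},\Lambda)^{\bd}$ is ULA over $S$ precisely when $[\mu]^{*}A\in D(S,\Lambda)$ is locally constant with perfect fibres for every $\mu\in\bb{X}_{*}^{+}(T)$ (for non-split $G$ one reduces to this after faithfully flat base change to a splitting cover, or argues with the support directly). The point that makes this effective is that the section $[\mu]\colon\Divtil\to\Gr_{G,\Divtil}$, $(\xi_{1},\xi_{2})\mapsto\mu(\xi_{2})$, factors through the \emph{open} Schubert cell $\Gr_{G,\mu}\subset\Gr_{G,\leq\mu}$ (by the Cartan decomposition $\mu(\xi_{2})$ lies in the $\mu$-cell at every geometric point), and that $\Gr_{G,\mu}$ is the $L^{+}G$-orbit of $[\mu]$ (Proposition~\ref{prop:LGmu}); thus on $\Hck_{G,\mu,S}$ the condition ``ULA over $S$'' is just ``$[\mu]^{*}$ locally constant with perfect fibres'', so functors with a transparent cellwise description can be analysed one cell at a time. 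Throughout I write $\mathbb{D}=\mathbb{D}_{/S}$ for relative Verdier duality over $S$ and use the standard ULA formalism of \cite{HS} and \cite{FS}---stability of ULA under $\mathbb{D}$, relative biduality $\mathbb{D}\mathbb{D}A\simeq A$, proper pushforward (Lemma~\ref{lemm:ULApropsm}(i)), smooth pullback (Lemma~\ref{lemm:ULApropsm}(ii)), and compatibility of $\mathbb{D}$, $Rj_{*}$, $Rj^{!}$ with arbitrary base change on ULA objects---always after passing to a finitely presented model as in \S\ref{scn:indsch}; since $\Gr_{G,S}\to\Hck_{G,S}$ is a smooth $L^{+}_{S}G$-torsor of constant relative dimension, statements on $\Hck_{G,S}$ follow from the corresponding ones on $\Gr_{G,S}$ up to a locally constant shift and twist.

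For $-\otimes_{\Lambda}^{\bb{L}}-$ one notes that $[\mu]^{*}$ is symmetric monoidal, so $[\mu]^{*}(A\otimes_{\Lambda}^{\bb{L}}B)\simeq[\mu]^{*}A\otimes_{\Lambda}^{\bb{L}}[\mu]^{*}B$, and the derived tensor product over $\Lambda$ of two locally constant complexes with perfect fibres is again locally constant with perfect fibres. Stability under $\mathbb{D}$ is part of the cited ULA formalism, applied to a model and transported to $\Hck_{G,S}$. For $R\HHom_{\Lambda}$ I would not compute directly but use the identity $R\HHom_{\Lambda}(A,B)\simeq\mathbb{D}\bigl(A\otimes_{\Lambda}^{\bb{L}}\mathbb{D}B\bigr)$ (tensor--Hom adjunction together with relative biduality applied to the ULA object $B$), which reduces this case to the two previous ones.

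For the four ``$j$'' functors, let $j=j_{\nu}\colon\Gr_{G,\nu,S}\hookrightarrow\Gr_{G,S}$ be the inclusion of a Schubert cell, with closure $\Gr_{G,\leq\nu,S}$, and factor $j=\bar\jmath\circ k$ with $\bar\jmath\colon\Gr_{G,\leq\nu,S}\hookrightarrow\Gr_{G,S}$ closed and $k\colon\Gr_{G,\nu,S}\hookrightarrow\Gr_{G,\leq\nu,S}$ open. First, $j_{!}j^{*}$ is handled by the criterion alone: $[\nu]^{*}(j_{\nu!}j_{\nu}^{*}A)\simeq[\nu]^{*}A$ since $[\nu]$ factors through $\Gr_{G,\nu}$, while for $\mu\neq\nu$ the section $[\mu]$ factors through $\Gr_{G,\mu}$, disjoint from $\Gr_{G,\nu}$, so $[\mu]^{*}(j_{\nu!}j_{\nu}^{*}A)=0$; both are locally constant with perfect fibres. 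Next, restriction along a closed union of Schubert cells---such as $\bar\jmath^{*}$, or $i^{*}$ for $i\colon\bigcup_{\nu'<\nu}\Gr_{G,\leq\nu',S}\hookrightarrow\Gr_{G,\leq\nu,S}$---preserves ULA-ness, again immediately from the criterion (together with Lemma~\ref{lemm:ULApropsm}(i) for the corresponding pushforward). Then, from $i^{!}\simeq\mathbb{D}\,i^{*}\mathbb{D}$ and the localization triangle on $\Gr_{G,\leq\nu,S}$ with terms $i_{*}i^{!}(\bar\jmath^{*}A)$, $\bar\jmath^{*}A$, $Rk_{*}k^{*}(\bar\jmath^{*}A)$, stability of ULA under $\mathbb{D}$, $i^{*}$, $i_{*}$ and cones forces $Rk_{*}k^{*}(\bar\jmath^{*}A)$ to be ULA, and applying $\bar\jmath_{*}$ (proper) shows $Rj_{\nu*}j_{\nu}^{*}A$ is ULA. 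Finally relative Verdier duality interchanges the remaining pairs, $\mathbb{D}(j_{!}j^{*}\mathbb{D}A)\simeq Rj_{*}Rj^{!}A$ and $\mathbb{D}(Rj_{*}j^{*}\mathbb{D}A)\simeq j_{!}Rj^{!}A$, so $Rj_{*}Rj^{!}$ and $j_{!}Rj^{!}$ preserve ULA-ness as well.

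For the compatibility with a pullback $g\colon S'\to S$: $j_{!}$, $j^{*}$ and $-\otimes_{\Lambda}^{\bb{L}}-$ commute with arbitrary base change, and $Rj_{*}$, $Rj^{!}$, $\mathbb{D}$ (hence $R\HHom_{\Lambda}$ via the identity above) commute with arbitrary base change once applied to ULA objects by \cite{HS}; since all inputs and intermediate terms have just been shown to be ULA, all six functors commute with $g^{*}$. No single step is deep; the work is organizational---consistently replacing the ind-perfect objects by finitely presented models so that the ULA formalism of \cite{HS} (relative biduality, base-change compatibility) literally applies, transporting between $\Gr_{G,S}$ and $\Hck_{G,S}$ along the smooth torsor, and keeping the duality \emph{relative to $S$}, since the absolute dualizing complex of a possibly singular $S$ need not be locally constant. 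I expect the one mildly non-formal point to be precisely that restriction to a closed union of Schubert cells, and the dévissage used for $Rj_{*}$, stay within ULA sheaves; this is clean here only because $[\mu]$ lands in the open cell $\Gr_{G,\mu}$, which is exactly the feature of Proposition~\ref{prop:charaULA} being exploited.
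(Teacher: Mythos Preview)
Your argument is correct and follows the same overall strategy as the paper: reduce to split $G$, use the criterion of Proposition~\ref{prop:charaULA} together with the ULA formalism of \cite{HS} (stability under $\mathbb{D}$, base-change compatibility), and chase the remaining functors through Verdier duality identities.

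Two places are organized slightly differently. For $Rj_{\mu*}j_{\mu}^{*}$ the paper does not use the localization triangle on $\Gr_{\leq\nu}$; instead it observes that, because $[\mu]\colon S\to[L^{+}_{S}G\backslash\Gr_{G,S,\mu}]$ is perfectly smooth modulo a pro-unipotent group, Verdier duality on the cell preserves the condition ``$[\mu]^{*}(-)$ is locally constant with perfect fibres'', and then writes $Rj_{\mu*}j_{\mu}^{*}=\mathbb{D}\,j_{\mu!}\,\mathbb{D}\,j_{\mu}^{*}$ to reduce to the $j_{!}j^{*}$ case already handled. For $R\HHom_{\Lambda}$ the paper does not invoke your identity $R\HHom_{\Lambda}(A,B)\simeq\mathbb{D}(A\otimes_{\Lambda}^{\mathbb{L}}\mathbb{D}B)$ but computes directly that
\[
[\mu]^{*}Rj_{\mu}^{!}R\HHom_{\Lambda}(A,B)\;\simeq\;R\HHom_{\Lambda}\bigl([\mu]^{*}j_{\mu}^{*}A,\,[\mu]^{*}Rj_{\mu}^{!}B\bigr),
\]
both factors being locally constant with perfect fibres by what was already proved. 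Your route via $\mathbb{D}(A\otimes\mathbb{D}B)$ is arguably cleaner and needs only relative biduality for the ULA object $B$; the paper's route is more explicit on the cell and makes the role of the smoothness of $[\mu]$ visible. Either way the content is the same.
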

\begin{proof}
The proof is the same as \cite[Corollary VI.6.6]{FS}.
We may assume $G$ is split.
Stability under Verdier duality and compatibility with base change in $S$ is from \cite[Proposition 3.4]{HS}.
Let 
\begin{align*}
j_{\mu}&\colon \Gr_{G,S,\mu}\hookrightarrow \Gr_{G,S},\\
j_{\leq \mu}&\colon \Gr_{G,S,\leq \mu}\hookrightarrow \Gr_{G,S}
\end{align*}
be the inclusions.
Stability under $-\otimes_{\Lambda}^{\bb{L}}-$ and $j_{\mu!}j_{\mu}^*$ follows from the characterization in Proposition \ref{prop:charaULA}.
Since the map
\[
[\mu]\colon S\to [L^+_SG\backslash\Gr_{G,S,\mu}]
\]
is perfectly smooth modulo pro-unipotent group, and the Verdier dual of a locally constant sheaf with perfect fibers is locally constant with perfect fibers, one can show that if $[\mu]^*A$ is locally constant with perfect fibers for $A\in D([L_S^+G\backslash\Gr_{G,S,\mu}],\Lambda)$, then so is $[\mu]^*\bb{D}A$.
Therefore, $Rj_{\mu*}j_{\mu}^*=\bb{D}j_{\mu!}\bb{D}j_{\mu}^*$ preserves universal local acyclicity.

By Verdier duality, it follows that $Rj_{\mu*}Rj_{\mu}^!$ and $j_{\mu!}Rj_{\mu}^!$ preserve ULA sheaves.
It remains to show that for $A,B\in D^{\ULA}(\Hck_{G,S},\Lambda)$, it holds that $R\HHom(A,B)\in D^{\ULA}(\Hck_{G,S},\Lambda)$.
Consider the maps of stacks
\[
S\overset{[\mu]}{\to} [L^+_SG\backslash\Gr_{G,S,\mu}] \overset{j_{\mu}}{\hookrightarrow} [L^+_SG\backslash \Gr_{G,S}]=\Hck_{G,S}.
\]
By Verdier duality and the fact that $[\mu]$ is perfectly smooth modulo pro-unipotent group, it remains to show that 
\begin{align*}
[\mu]^*Rj_{\mu}^!R\HHom_{\Lambda}(A,B)
\end{align*}
is locally constant with perfect fibers.
It holds that
\begin{align*}
[\mu]^*Rj_{\mu}^!R\HHom_{\Lambda}(A,B) \cong R\HHom_{\Lambda}([\mu]^*j_{\mu}^*A,[\mu]^*Rj_{\mu}^!B).
\end{align*}
Since $[\mu]^*j_{\mu}^*A$ is locally constant with perfect fibers and so is $[\mu]^*Rj_{\mu}^!B$ by Verdier duality, it suffices to show that $R\HHom_{\Lambda}(-,-)$ on $S$ preserves locally constant sheaves with perfect fibers, and this is easy.
\end{proof}
\subsection{Derived category for Affine flag variety}
Let $S\to \Divtil$ be a perfect scheme over $\Divtil$.
Similarly, we define a category
\[
D(\AHck_{G,S},\Lambda)^{\bd}:=D_{\cal{I}}(\Fl_{G,S},\Lambda)^{\bd}, 
\]
where $\AHck_{G,S}$ means the affine Hecke stack
\[
\AHck_{G,S}=[\cal{I}\backslash \Fl_{G,S}].
\]
\begin{defi}
For 
\[
A\in D(\AHck_{G,S},\Lambda)^{\rm{bd}}, 
\]
we say $A$ is universally locally acyclic (ULA) over $S$ if its pullback to the support in $\Fl_{G,S}$ is universally locally acyclic (ULA) over $S$.
Write
\[
D^{\ULA}(\AHck_{G,S},\Lambda) \subset D(\AHck_{G,S},\Lambda)^{\rm{bd}}
\]
for the full subcategory of ULA sheaves.
\end{defi}
The similar results to Proposition \ref{prop:charaULA} and Corollary  \ref{cor:ULAstabcompati} hold for $\AHck$ as well.
\section{The proof of main theorem}
In this and the next section, let $k$ be an algebraic closure of $k_E$.
Consider the two morphism
\begin{align*}
(\pi,t)\colon \Spec k\to \Divtil,\\
(t,\pi)\colon \Spec k\to \Divtil,
\end{align*}
corresponding to a point $(\xi_1,\xi_2)=(\pi,t)$ and $(t,\pi)$, respectively.
We write 
\[
\Gr^{\rm{eq}}_{G,\Spec k}
\]
for the pullback of $\Gr_{G,\Divtil}$ under $(\pi,t)$ and 
\[
\Gr^{\rm{Witt}}_{G,\Spec k}
\]
for the pullback under $(t,\pi)$, and similarly for Hecke stacks and so on.
In fact, $\Gr^{\rm{eq}}_{G,\Spec k}$ is equal to the perfection of the usual affine Grassmannian in equal characteristics, and $\Gr^{\rm{Witt}}_{G,\Spec k}$ is equal to Zhu's Witt vector affine Grassmannian defined in \S \ref{ssc:reviewGrass} changing the base to $\Spec k$.
\begin{thm}\label{thm:maineqmixedtorsion}
There exists a canonical monoidal equivalence of categories
\[
D^{\ULA}(\Hck_{G,\Spec k}^{\rm{eq}},\Lambda)\to D^{\ULA}(\Hck_{G,\Spec k}^{\rm{Witt}},\Lambda),
\]
and similarly for $\AHck$.
\end{thm}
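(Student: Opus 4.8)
\medskip
\noindent\emph{Sketch of the intended argument.}
The plan is to exhibit both sides as fibres of one family over the connected base $\Divtil$ and to transport along it, in the spirit of \cite[Corollary VI.6.7]{FS}. We may assume $G$ is split (the general case following by \'etale descent along a splitting extension). Set $\Divtil_k:=\Divtil\times_{\Spec k_E}\Spec k\cong\GL_{2,k}^{\perf}$; this is connected, and the two morphisms $(p,t),(t,p)\colon\Spec k\to\Divtil$ factor through $\Divtil_k$. Consider $D^{\ULA}(\Hck_{G,\Divtil_k},\Lambda)$ with its convolution product: it is monoidal by Lemma \ref{lemm:convass}, and convolution preserves ULA-ness because it is assembled from smooth pullback, external tensor and proper pushforward, all of which preserve $D^{\ULA}$ by Lemma \ref{lemm:ULApropsm} and Corollary \ref{cor:ULAstabcompati}. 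The two points induce pullback functors
\[
(p,t)^*\colon D^{\ULA}(\Hck_{G,\Divtil_k},\Lambda)\to D^{\ULA}(\Hck_{G,\Spec k}^{\eq},\Lambda),\qquad (t,p)^*\colon D^{\ULA}(\Hck_{G,\Divtil_k},\Lambda)\to D^{\ULA}(\Hck_{G,\Spec k}^{\Witt},\Lambda),
\]
which are monoidal by Lemma \ref{lemm:convpullback}.

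The key claim is that for \emph{every} point $s\colon\Spec k\to\Divtil_k$ the functor $s^*\colon D^{\ULA}(\Hck_{G,\Divtil_k},\Lambda)\to D^{\ULA}(\Hck_{G,s},\Lambda)$ is a monoidal equivalence; granting this, the required functor is the composite $(t,p)^*\circ\bigl((p,t)^*\bigr)^{-1}$, which is canonical and monoidal. The statement for $\AHck$ is obtained by the same argument, replacing $\Gr$ by $\Fl$, the Schubert data and the Demazure resolution by their affine-flag analogues, and using Proposition \ref{prop:jwLULA} together with the flag-variety version of Corollary \ref{cor:ULAstabcompati} in place of Proposition \ref{prop:charaULA}.

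For essential surjectivity I would induct on the support: every object of $D^{\ULA}(\Hck_{G,s},\Lambda)$ is built from the constant sheaves $\Lambda$ on the Schubert cells $\Gr_{G,s,\mu}$ by iterated cones along $j_{\mu!}j_\mu^\ast\to\mathrm{id}$, which stay ULA by Corollary \ref{cor:ULAstabcompati}. Since the whole Schubert stratification of $\Gr_{G,\Divtil}$ and the explicit cell structure $\Gr_{G,\Divtil,\mu}\cong L^+G/(L^+G)_\mu$ of Proposition \ref{prop:LGmu} are already defined over $\Divtil$, the sheaf $j_{\mu!}\Lambda$ on $\Hck_{G,\Divtil_k}$ is ULA by Proposition \ref{prop:charaULA} (each $[\nu]^\ast j_{\mu!}\Lambda$ is visibly $\Lambda$ or $0$, hence locally constant with perfect fibres), and it restricts under $s^\ast$ to the corresponding generator on the fibre. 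One then lifts the iterated-cone presentation of a given object on the fibre step by step, the lift being pinned down by the requirement that its restrictions along the sections $[\mu]$ be the prescribed locally constant sheaves.

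For full faithfulness I would use that for $A,B\in D^{\ULA}(\Hck_{G,\Divtil_k},\Lambda)$ the complex $R\HHom_\Lambda(A,B)$ is again ULA over $\Divtil_k$, and that the localization and extension operations along Schubert strata commute with base change along $s$ (Corollary \ref{cor:ULAstabcompati}); this reduces the comparison of $R\mathrm{Hom}$-complexes to a statement about complexes of locally constant sheaves with perfect fibres on $\Divtil_k$ obtained by restriction along the sections $[\mu]$. The main obstacle is exactly this reduction: $\Divtil_k\cong\GL_{2,k}^{\perf}$ is a genuine (perfected) variety with non-trivial fundamental group, so one must show that the locally constant sheaves produced from ULA sheaves along the $[\mu]$ carry no monodromy, i.e.\ that the $L^+G$-equivariant structure forces them to be constant, so that passing to a fibre loses nothing. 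The route I expect to take here is to observe that, by Proposition \ref{prop:LGmu} and the canonical trivialization of the twist line bundles $\xi_2^mB^+_{\xi_1,\xi_2}/\xi_2^{m+1}B^+_{\xi_1,\xi_2}$ by $\xi_2^m$, the entire $L^+G$-orbit stratification of $\Gr_{G,\Divtil}$ together with its local geometry is ``constant'' over $\Divtil$, whence the desired vanishing of monodromy. Carrying this out carefully, and checking its compatibility with convolution so that the resulting equivalence is monoidal, is where the real work lies; the remaining bookkeeping (compatibility with the triangulated structure and independence of auxiliary choices) is then routine.
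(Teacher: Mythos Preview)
Your overall plan is sound and matches the paper's strategy, but the step you flag as the ``main obstacle'' is a genuine gap and your proposed resolution does not work. The group $L^+_{\Divtil}G$ is a group scheme \emph{over} $\Divtil$ acting fibrewise on $\Gr_{G,\Divtil}$; the $L^+G$-equivariance is therefore a purely fibrewise condition and imposes no constraint whatsoever on variation along the base. Concretely, for any nontrivial local system $L$ on $\Divtil_k\cong\GL_{2,k}^{\perf}$, the object $j_{\mu!}(\mathrm{pr}^*L)$, where $\mathrm{pr}\colon[L^+G\backslash\Gr_{G,\Divtil_k,\mu}]\to\Divtil_k$ is the structure map, lies in $D^{\ULA}(\Hck_{G,\Divtil_k},\Lambda)$ (by your own criterion: each $[\nu]^*$ returns $L$ or $0$), yet its fibre at any $k$-point is isomorphic to $j_{\mu!}\Lambda^{\mathrm{rk}\,L}$. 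Hence $s^*$ is not faithful on $D^{\ULA}(\Hck_{G,\Divtil_k},\Lambda)$, and the composite $(t,p)^*\circ((p,t)^*)^{-1}$ is not defined. Neither the ``constancy'' of the stratification nor the trivialisation of the twist line bundles helps: those facts concern the geometry, whereas the monodromy lives in the coefficients.

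The paper circumvents this by never working over $\Divtil_k$ itself. Instead it passes to bases with trivial \'etale fundamental group: writing $\cal{O}$ for the coordinate ring of $\Divtil$ and $K$ for its fraction field, it uses the strict henselisation $\ol{\cal{O}}_{(p,t)}$ (the localisation at a prime over $(p,t)$ of the integral closure of $\cal{O}$ in $\ol{K}$) and the geometric generic point $\Spec\ol{K}$. Over each of $\Spec k$, $\Spec\ol{\cal{O}}_{(p,t)}$, $\Spec\ol{K}$ the notion ``locally constant with perfect fibres'' collapses to ``constant'', so the three categories of perfect $\Lambda$-complexes are canonically identified by pullback. From there the argument proceeds exactly as you outlined (Schubert-cell induction, $R\HHom$ stability, excision), and full faithfulness now follows because $R\Gamma$ of a ULA $R\HHom$ genuinely agrees under $i^*$ and $j^*$. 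Repeating with $(t,p)$ in place of $(p,t)$ and composing through $D^{\ULA}(\Hck_{G,\Spec\ol{K}},\Lambda)$ gives the equivalence. The moral fix to your sketch is thus: replace $\Divtil_k$ by a simply connected ``path'' between the two special points.
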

\begin{proof}
Let $\cal{O}$ be a coordinate ring of $\Divtil$ and $K$ its fractional field.
Write
\[
\ol{\cal{O}}_{(\pi,t)}\subset \ol{K}
\]
for the localization of the integral closure of $\cal{O}$ in $K$ at a prime lying above $(\pi,t)$.
We will show that the two pullback functors
\begin{align}
D^{\ULA}(\Hck_{G,\Spec \ol{\cal{O}}_{(\pi,t)}},\Lambda)&\overset{i^*}{\to} D^{\ULA}(\Hck^{\rm{eq}}_{G,\Spec k},\Lambda),\label{eqn:equivOkE}\\
D^{\ULA}(\Hck_{G,\Spec \ol{\cal{O}}_{(\pi,t)}},\Lambda)&\overset{j^*}{\to} D^{\ULA}(\Hck_{G,\Spec \ol{K}},\Lambda)\label{eqn:equivOKC}
\end{align}
are equivalences by a similar argument to \cite[Corollary VI.6.7]{FS} as follows:

We can first show that the category of locally constant sheaves with perfect fibers on $\Spec \ol{\cal{O}}_{(\pi,t)}$, $\Spec k$ and $\Spec \ol{K}$ are equivalent each other under the pullback functors, as these three categories are equivalent to the category of perfect $\Lambda$-complexes.

For a scheme $S$ over $\Spec k$, we write 
\[
D^{\rm{consta}}(S,\Lambda)
\]
for the essential image of the pullback functor
\[
D^b(\Spec k,\Lambda)\to D^b(S,\Lambda), 
\]
where $D^b(-,\Lambda)$ denotes the bounded derived category.
Then we can use the following lemma:
\begin{lemm}
Let $S$ and $X$ be schemes over $\Spec k$.
Assume that the pullback functor 
\[
D^b(\Spec k,\Lambda)\to D^{\rm{consta}}(S,\Lambda)
\]
is an equivalence and that $X$ is smooth over $\Spec k$.
Then the pullback functor 
\[
D^{\rm{consta}}(X,\Lambda)\to D^{\rm{consta}}(X\times_{\Spec k} S,\Lambda)
\]
is an equivalence.
\end{lemm}
\begin{proof}
The essential surjectivity is clear from the definition of $D^{\rm{consta}}(-,\Lambda)$.

It remains to show that for two bounded $\Lambda$-complexes $M,N$, 
\[
H^0(X,R\Hom(M,N))=H^0(X\times S,R\Hom(M,N)).
\]
Considering a diagram
\[
\xymatrix{
X\times_{\Spec k} S\ar[r]^p\ar[d]_b &X\ar[d]^a\\
S\ar[r]_q&\Spec k,
}
\]
it suffices to show that 
\[
Ra_*a^*\cong Ra_*Rp_*p^*a^*.
\]
This follows from the computation
\begin{align*}
Ra_*Rp_*p^*a^*&\cong Ra_*Rp_*b^*q^*\\
&\cong Ra_*a^*Rq_*q^*\\
&\cong Ra_*a^*,
\end{align*}
where the second isomorphism follows from the smoothness of $X$ and the third one from the assumption on $S$.
\end{proof}
Let us return to the proof of Theorem \ref{thm:maineqmixedtorsion}
Now it follows that the pullback functors
\begin{align*}
D^{\rm{consta}}((\bar{P}^{-}_{\mu})^n\times_{\Spec k_E} \Spec \ol{\cal{O}}_{(\pi,t)},\Lambda)&\underset{\sim}{\to} D^{\rm{consta}}((\bar{P}^{-}_{\mu})^n\times_{\Spec k_E} \Spec k,\Lambda),\\
D^{\rm{consta}}((\bar{P}^{-}_{\mu})^n\times_{\Spec k_E} \Spec \ol{\cal{O}}_{(\pi,t)},\Lambda)&\underset{\sim}{\to} D^{\rm{consta}}((\bar{P}^{-}_{\mu})^n\times_{\Spec k_E} \Spec \ol{K},\Lambda)
\end{align*}
are equivalences since these are equivalent to $D^{\rm{consta}}((\bar{P}^{-}_{\mu})^n\times_{\Spec k_E} \Spec k,\Lambda)$ via pullback functors.
Therefore we have equivalences
\begin{align*}
D^{\ULA}([\Spec k_E/\bar{P}^{-}_{\mu}]\times_{\Spec k_E} \Spec \ol{\cal{O}}_{(\pi,t)},\Lambda)&\underset{\sim}{\to} D^{\ULA}([\Spec k_E/\bar{P}^{-}_{\mu}]\times_{\Spec k_E} \Spec k,\Lambda),\\
D^{\ULA}([\Spec k_E/\bar{P}^{-}_{\mu}]\times_{\Spec k_E} \Spec \ol{\cal{O}}_{(\pi,t)},\Lambda)&\underset{\sim}{\to} D^{\ULA}([\Spec k_E/\bar{P}^{-}_{\mu}]\times_{\Spec k_E}  \Spec \ol{K},\Lambda)
\end{align*}
since these are the limit of the categories $D^{\rm{consta}}((\bar{P}^{-}_{\mu})^n\times_{\Spec k_E} (-),\Lambda)$ over $n$.
Thus, by Proposition \ref{prop:LGmu}, we have equivalences
\begin{align}
D^{\ULA}(\Hck_{G,\Spec \ol{\cal{O}}_{(\pi,t)},\mu},\Lambda)&\underset{\sim}{\overset{i^*}{\to}} D^{\ULA}(\Hck^{\rm{eq}}_{G,\Spec k,\mu},\Lambda),\label{eqn:equivOkEmu}\\
D^{\ULA}(\Hck_{G,\Spec \ol{\cal{O}}_{(\pi,t)},\mu},\Lambda)&\underset{\sim}{\overset{j^*}{\to}} D^{\ULA}(\Hck_{G,\Spec \ol{K},\mu},\Lambda)\label{eqn:equivOKCmu}
\end{align}
for all $\mu$.
From this, we can show the following claim:
\begin{quote}
For $A\in D^{\ULA}(\Hck_{G,\Spec \ol{\cal{O}}_{(\pi,t)}},\Lambda)$, 
\begin{align}\label{eqn:RGammaA}
R\Gamma(A)\cong R\Gamma(i^*A)\cong R\Gamma(j^*A).
\end{align}
\end{quote}
In fact, by induction on $\supp(A)$ using excision distinguished triangle, this reduces to the case $A\in D^{\ULA}(\Hck_{G,\Spec \ol{\cal{O}}_{(\pi,t)},\mu},\Lambda)$ for some $\mu$.
In this case, the claim follows from (\ref{eqn:equivOkEmu}) and (\ref{eqn:equivOKCmu}).

By (\ref{eqn:RGammaA}), the compatibility of $R\HHom(-,-)$ with base changes and the stability of universal local acyclicity under $R\HHom(-,-)$, we can show that the above two pullbacks are fully faithful, noting that
\[
\Hom(-,-)=H^0(R\Gamma(R\HHom(-,-))).
\]
A ULA sheaf is an extension of the sheaves in Schubert cells.
The equivalences (\ref{eqn:equivOkE}) and (\ref{eqn:equivOKC}) follow from the equivalences (\ref{eqn:equivOkEmu}) and (\ref{eqn:equivOKCmu}), and the fact that the sets of extensions are equivalent by full faithfulness.

By (\ref{eqn:equivOkE}) and (\ref{eqn:equivOKC}), we have an equivalence
\begin{align}
D^{\ULA}(\Hck^{\rm{eq}}_{G,\Spec k},\Lambda)\simeq  D^{\ULA}(\Hck_{G,\Spec \ol{K}},\Lambda). \label{eqn:equivkEKC}
\end{align}
By the same argument, we also have
\[
D^{\ULA}(\Hck^{\rm{Witt}}_{G,\Spec k},\Lambda)\simeq  D^{\ULA}(\Hck_{G,\Spec \ol{K}},\Lambda).
\]
Thus we obtain
\begin{align}
D^{\ULA}(\Hck^{\rm{eq}}_{G,\Spec k},\Lambda)\simeq D^{\ULA}(\Hck^{\rm{Witt}}_{G,\Spec k},\Lambda).\label{eqn:equivkEkE}
\end{align}
This equivalence is monoidal by Lemma \ref{lemm:convpullback}.
\end{proof}
In particular, setting $\Lambda=\Qlb$ in Theorem \ref{thm:maineqmixedtorsion} and 
noting that any object $A\in D^b_c(\AHck_{G,S},\Qlb)^{\bd}$ is ULA, we have the following result:
\begin{thm}\label{thm:maineqmixedQlb}
There exists a canonical equivalence of categories
\[
D^b_c(\Hck_{G,\Spec k}^{\rm{eq}},\Qlb)\overset{\sim}{\to} D^b_c(\Hck_{G,\Spec k}^{\rm{Witt}},\Qlb), 
\]
and similarly for $\AHck$.
\end{thm}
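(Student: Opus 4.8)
The plan is to deduce Theorem~\ref{thm:maineqmixedQlb} from Theorem~\ref{thm:maineqmixedtorsion} by specialising the coefficients to $\Lambda=\Qlb$. As $\Qlb$ is an algebraic extension of $\QQ_\ell$, it is an admissible coefficient ring (case~(ii) of the Notations), so Theorem~\ref{thm:maineqmixedtorsion} applies and produces a canonical monoidal equivalence
\[
D^{\ULA}(\Hck_{G,\Spec k}^{\eq},\Qlb)\overset{\sim}{\to} D^{\ULA}(\Hck_{G,\Spec k}^{\Witt},\Qlb),
\]
and likewise with $\AHck$ in place of $\Hck$.

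What remains is to identify, for $?\in\{\eq,\Witt\}$, the full subcategory $D^{\ULA}(\Hck_{G,\Spec k}^{?},\Qlb)$ with all of $D^b_c(\Hck_{G,\Spec k}^{?},\Qlb)$, and similarly for $\AHck$; equivalently, that with $\Qlb$-coefficients every object of $D^b_c$ on the Hecke stacks over $\Spec k$ is $\ULA$ over $\Spec k$. Using the ind-presentations of \S\ref{scn:indsch} this reduces to the corresponding statement on each of the perfections of projective varieties (resp.\ algebraic spaces) $X_i$ appearing there, where it follows from two standard facts. First, the base $\Spec k$ is the spectrum of a field: it admits no non-trivial generisations, so universal local acyclicity over it is automatic for any constructible complex --- this is the classical input behind Deligne's theorem of finiteness, compare \cite{HS}. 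Second, $\Qlb$ being a field, the ``perfect fibres'' condition entering the definition of $D^b_c$ is automatic, since every bounded complex of finite-dimensional $\Qlb$-vector spaces is perfect. Hence $D^{\ULA}(\Hck_{G,\Spec k}^{?},\Qlb)=D^b_c(\Hck_{G,\Spec k}^{?},\Qlb)$, the boundedness decoration being harmless because every object here is already bounded.

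Combining the two steps yields the asserted equivalence, which is canonical and monoidal because the one in Theorem~\ref{thm:maineqmixedtorsion} is. I do not expect any genuine obstacle: the only point beyond Theorem~\ref{thm:maineqmixedtorsion} is the identification $D^{\ULA}=D^b_c$ over $\Spec k$ with $\Qlb$-coefficients, and this rests solely on the two automaticity statements above, which are already contained in \cite{HS} and \cite{FS}.
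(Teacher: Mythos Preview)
Your proposal is correct and follows exactly the paper's approach: the paper deduces Theorem~\ref{thm:maineqmixedQlb} from Theorem~\ref{thm:maineqmixedtorsion} by setting $\Lambda=\Qlb$ and ``noting that any object $A\in D^b_c(\AHck_{G,S},\Qlb)^{\bd}$ is ULA.'' Your elaboration of why the ULA condition is automatic over $\Spec k$ with $\Qlb$-coefficients is a reasonable expansion of what the paper leaves implicit.
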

\begin{cor}(Derived Satake correspondence for Witt vector affine Grassmannian)\label{cor:DerivedSatbody}
There exists a canonical equivalence
\[
D^b_c(\Hck_{G,\Spec k}^{\Witt},\Qlb)\overset{\sim}{\longrightarrow} D^{\wh{G}}_{perf}(\Sym^{[]}(\wh{\fk{g}})).
\]
\end{cor}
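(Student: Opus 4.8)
The plan is to deduce this equivalence by composing the equivalence of Theorem~\ref{thm:maineqmixedQlb} with the equal-characteristic derived geometric Satake equivalence of Bezrukavnikov--Finkelberg \cite{BF}. Given $G$ reductive over $\cal{O}_E$, I would first regard it as a constant reductive group scheme over $\cal{O}_E[[t]]$ and apply Theorem~\ref{thm:maineqmixedQlb}, obtaining a monoidal equivalence
\[
D^b_c(\Hck_{G,\Spec k}^{\eq},\Qlb)\overset{\sim}{\longrightarrow} D^b_c(\Hck_{G,\Spec k}^{\Witt},\Qlb).
\]
On the other hand, \cite{BF} supplies a monoidal equivalence $D^b(\Hck_{G,\Spec k}^{\eq},\Qlb)\overset{\sim}{\longrightarrow} D^{\wh{G}}_{perf}(\Sym^{[]}(\wh{\fk{g}}))$; here one uses \cite{BF} purely in the equal-characteristic setting, so the loop rotation action it relies on (which is precisely what obstructs a direct mixed-characteristic argument, as explained in the introduction) is available. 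Inverting the first equivalence and composing with the second gives the asserted equivalence, and it is monoidal since each factor is: the first by Lemma~\ref{lemm:convpullback} (as recorded at the end of the proof of Theorem~\ref{thm:maineqmixedtorsion}), the second by \cite{BF}.

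The one point deserving care is the precise meaning of the source category. In our setting the symbol $\Gr^{\eq}_{G,\Spec k}$ denotes the \emph{perfection} of the usual equal-characteristic affine Grassmannian, whereas \cite{BF} works with the non-perfected ind-scheme and its $L^{+,\eq}G$-equivariant bounded constructible derived category. I would invoke the fact that the perfection morphism is a universal homeomorphism, hence induces an equivalence on \'{e}tale sites and on bounded constructible derived categories, compatibly with the $L^{+,\eq}G$-action (which on each Schubert variety factors through a common finite-type quotient on the perfected and non-perfected sides) and with the convolution product. Thus
\[
D^b_c(\Hck_{G,\Spec k}^{\eq},\Qlb)\simeq D^b_{L^{+,\eq}G}(\Gr_{G,\Spec k}^{\eq},\Qlb)=D^b(\Hck_{G,\Spec k}^{\eq},\Qlb)
\]
as monoidal triangulated categories, so the target of the inverse of the equivalence of Theorem~\ref{thm:maineqmixedQlb} is exactly the source of the \cite{BF} equivalence.

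Since all the genuinely new content is already carried by Theorem~\ref{thm:maineqmixedQlb}, no further geometric work is needed. I expect the only (mild) obstacle to be the bookkeeping of the previous paragraph: carefully matching our Hecke stack $\Hck_{G,\Spec k}^{\eq}$, together with its convolution monoidal structure, with the equal-characteristic Hecke stack and the monoidal structure used in \cite{BF}, and checking that the resulting composite is canonical in the asserted sense.
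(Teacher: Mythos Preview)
Your proposal is correct and matches the paper's own proof, which simply says to combine Theorem~\ref{thm:maineqmixedQlb} with the equal-characteristic derived Satake equivalence of \cite{BF}. Your extra paragraph on identifying the perfected and non-perfected equivariant derived categories via the universal homeomorphism is a reasonable (and welcome) clarification, but the argument is otherwise identical to the paper's.
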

\begin{proof}
Use the Derived Satake correspondence in equal characteristics (\cite{BF}) and Theorem \ref{thm:maineqmixedQlb}.
\end{proof}
\begin{cor}(Affine Hecke category for Witt vector affine Grassmannian)\label{cor:AffHckbody}
There exists a canonical equivalence
\[
D^b_c(\AHck_{G,\Spec k}^{\Witt},\Qlb)\overset{\sim}{\longrightarrow}\mathrm{DGCoh}^{\wh{G}}(\wt{N}\times_{\wh{\fk{G}}}\wt{N}).
\]
\end{cor}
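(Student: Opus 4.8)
The plan is to derive this equivalence formally, by combining Bezrukavnikov's equivalence in equal characteristic with the $\AHck$-part of Theorem \ref{thm:maineqmixedQlb}. First I would record that, writing $\bar G_k$ for the special fibre of $G$ base-changed to $k$, the stack $\AHck_{G,\Spec k}^{\eq}$ coincides with the perfection of the usual equal-characteristic affine Hecke stack of $\bar G_k$; this rests on identifying the fibre of $\cal{I}$ over the point $(p,t)\in\Divtil$ with the classical Iwahori subgroup, which is the $\AHck$-counterpart of the identification of $\Gr^{\eq}$ with the classical affine Grassmannian already used in the excerpt. Since \'{e}tale cohomology is insensitive to perfection, $D^b_c(\AHck_{G,\Spec k}^{\eq},\Qlb)$ is thereby identified with the bounded constructible $\Qlb$-derived category of the classical equal-characteristic affine Hecke stack, to which \cite{Bez} applies and yields a canonical equivalence of monoidal triangulated categories
\[
D^b_c(\AHck_{G,\Spec k}^{\eq},\Qlb)\overset{\sim}{\longrightarrow} \mathrm{DGCoh}^{\wh{G}}(\wt{\cal{N}}\times_{\wh{\fk{g}}}\wt{\cal{N}}).
\]

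Next I would invoke Theorem \ref{thm:maineqmixedQlb} in its $\AHck$-form, which supplies a canonical monoidal equivalence
\[
D^b_c(\AHck_{G,\Spec k}^{\eq},\Qlb)\overset{\sim}{\longrightarrow} D^b_c(\AHck_{G,\Spec k}^{\Witt},\Qlb).
\]
Concretely, the chain of equivalences \eqref{eqn:equivOkE}--\eqref{eqn:equivkEkE} goes through verbatim with the affine Schubert varieties $\Fl_{G,\Divtil,\leq w}$ and their Demazure resolutions $\rm{Dem}_{\dot{w}}$ in place of $\Gr_{G,\Divtil,\leq\mu}$ and $\wt{\Gr}_{\mu}$, using the $\AHck$-analogues of Proposition \ref{prop:charaULA} and Corollary \ref{cor:ULAstabcompati} (already asserted in the excerpt) together with the fact that every object of $D^b_c(\AHck_{G,S},\Qlb)^{\bd}$ is ULA over $S$. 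Composing the two displayed equivalences produces the asserted equivalence.

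Finally I would check the two bookkeeping points. For monoidality: the first equivalence is monoidal by \cite{Bez}, and the second is monoidal because it is assembled from pullback functors along $\Spec k \to \Spec\ol{\cal{O}}_{(p,t)} \leftarrow \Spec\ol{K}$, which commute with the convolution product by Lemma \ref{lemm:convpullback}; hence the composite is monoidal. For the base rings: the group here lives over $\cal{O}_E$, and one applies the foregoing constructions to its base change over $W(k)[[t]]\otimes_{\ZZ_p}\cal{O}_E$, whose fibre modulo $(p,t)$ is precisely the split reductive group over $k$ entering the coherent side. I do not expect any genuine obstacle: all the real content is already contained in Theorem \ref{thm:maineqmixedQlb} and in \cite{Bez}, and the only thing demanding care is that the various identifications of affine Hecke stacks, Iwahori subgroups and monoidal structures be set up compatibly.
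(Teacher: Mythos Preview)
Your proposal is correct and follows exactly the paper's own proof, which simply says to combine Bezrukavnikov's equal-characteristic result \cite{Bez} with the $\AHck$-part of Theorem \ref{thm:maineqmixedQlb}. You have merely spelled out in more detail the identifications and compatibilities that the paper leaves implicit.
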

\begin{proof}
Use the result in equal characteristics (\cite{Bez}) and Theorem \ref{thm:maineqmixedQlb}.
\end{proof}
\section{Perverse sheaves}
In this section, we will show that the equivalence in Theorem \ref{thm:maineqmixedtorsion} preserves perversity.
First, we need to define the perversity.
\begin{defi}\label{defi:pervalgsp}
Let $S$ be a scheme over $\bb{Z}[\frac{1}{\ell}]$.
Let $X$ be an algebraic space over $S$.
The relative perverse t-structure $({}^pD^{\leq 0},{}^pD^{\geq 0})$ on $D^b_c(X,\Lambda)$ is defined as follows:
Let $X'\to X$ be a \'{e}tale cover from a scheme $X'$.
For $A\in D^b_c(X,\Lambda)$,
\[
A\in {}^pD^{\leq 0}(X,\Lambda)\ \text{(resp.$A\in {}^pD^{\geq 0}(X,\Lambda)$)}
\]
if its pullback to $X'$ is in ${}^pD^{\leq 0}(X',\Lambda)$\ (resp.$A\in {}^pD^{\geq 0}(X',\Lambda)$) in the sense of \cite[Theorem 6.1]{HS}.

We call $A$ perverse if $A\in {}^pD^{\leq 0}(X,\Lambda)\cap {}^pD^{\geq 0}(X,\Lambda).$
\end{defi}
\begin{defi}\label{defprop:perv}
Let $S\to \Divtil$ be a perfect scheme over $\Divtil$.
For $A\in D(\Hck_{G,S},\Lambda)^{\rm{bd}}$, we write
\[
A\in {}^pD^{\leq 0}(\Hck_{G,S},\Lambda)^{\rm{bd}} \text{(resp. ${}^pD^{\geq 0}(\AHck_{G,S},\Lambda)^{\rm{bd}}$)}
\]
if its pullback to $D^b_c(\Gr_{G,S},\Lambda)$ is (when restricted to its support) in ${}^pD^{\leq 0}$ (resp. ${}^pD^{\geq 0}$) in the sense of Definition \ref{defi:pervalgsp}.

We call $A$ perverse if $A\in {}^pD^{\leq 0}(\Hck_{G,S},\Lambda)\cap {}^pD^{\geq 0}(\Hck_{G,S},\Lambda)$.
\end{defi}
By \cite[Theorem 6.1]{HS}, the perverse t-structure is fiberwise, and on fibers, it is equal to the usual (absolute) perverse t-structure.
Thus $A\in {}^pD^{\leq 0}(\Hck_{G,S},\Lambda)^{\rm{bd}}$  if and only if for all geometric points $\Spec C\to S$ and for all $\mu \in \bb{X}_*^+(T)$, the pullback of $A$ to 
\[
\Gr_{S,\Spec C,\mu}
\]
sits in cohomological degrees $\leq -\langle2\rho,\mu\rangle$, noting Proposition \ref{prop:LGmu}.
Moreover, let us define the notion of flat perversity:
\begin{defi}
Let $S\to \Divtil$ be a perfect scheme over $\Divtil$.
An object $A\in D(\Hck_{G,S},\Lambda)^{\rm{bd}}$ is flat perverse if $A\otimes_{\Lambda}^{\bb{L}} M$ is perverse for any $\Lambda$-module $M$.
\end{defi}
Write
\[
\Sat(\Hck_{G,S},\Lambda)\subset D^{\ULA}(\Hck_{G,S},\Lambda)^{\rm{bd}}
\]
for the full subcategory of flat perverse ULA sheaves.
\begin{prop}\label{prop:pervpreserve}
Let $S'\to S\to \Divtil$ be morphisms of perfect schemes.
The pullback functor
\[
f^*\colon D^{\ULA}(\Hck_{G,S},\Lambda)\to D^{\ULA}(\Hck_{G,S'},\Lambda)
\]
preserves the objects in $\Sat$.
The same holds for $\AHck$.
\end{prop}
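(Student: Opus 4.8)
The plan is to deduce the statement from the identity $\Gr_{G,S'}=\Gr_{G,\Divtil}\times_{\Divtil}S'=\Gr_{G,S}\times_S S'$, which makes the morphism $\Gr_{G,S'}\to\Gr_{G,S}$ inducing $f^*$ into the base change of $\Gr_{G,S}\to S$ along $f\colon S'\to S$, together with the fibrewise characterization of the relative perverse $t$-structure recalled above (\cite[Theorem 6.1]{HS}). First I would reduce, via Definition \ref{defprop:perv}, to the corresponding statement after pulling back to the affine Grassmannian and restricting to supports: writing $\bar A$ and $\bar B$ for the pullbacks of $A$ and of $f^*A$ to $\Gr_{G,S}$ and to $\Gr_{G,S'}$ respectively, one has $\bar B=h^*\bar A$ for the projection $h\colon \Gr_{G,S'}\to\Gr_{G,S}$, and $\supp\bar B$ lies in the base change to $S'$ of $\supp\bar A$, hence in the base change of some $\Gr_{G,S,\leq\mu}$ --- the perfection of a projective algebraic space over $S$ --- so that the relative perverse $t$-structure of Definition \ref{defi:pervalgsp} is available on both sides.

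Next I would check the perverse bounds fibrewise. Let $\bar s'=\Spec C'\to S'$ be an arbitrary geometric point and let $\bar s=\Spec C'\to S$ be the geometric point of $S$ obtained by composing with $f$. Since $h$ is a base change, it identifies the fibre $\Gr_{G,S'}\times_{S'}\bar s'$ with $\Gr_{G,S}\times_S\bar s$ compatibly with the Schubert stratifications, so the restriction of $\bar B$ to $\Gr_{G,S'}\times_{S'}\bar s'$ is canonically the restriction of $\bar A$ to $\Gr_{G,S}\times_S\bar s$. If $A$ lies in ${}^pD^{\leq 0}$ (resp.\ ${}^pD^{\geq 0}$) over $S$, then by \cite[Theorem 6.1]{HS} the latter restriction lies in the absolute ${}^pD^{\leq 0}$ (resp.\ ${}^pD^{\geq 0}$), hence so does the former; since $\bar s'$ was arbitrary, \cite[Theorem 6.1]{HS} applied over $S'$ gives $f^*A\in{}^pD^{\leq 0}$ (resp.\ ${}^pD^{\geq 0}$) over $S'$, and intersecting the two cases handles $\Perv$. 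To conclude I would note that $f^*$ preserves the ULA subcategory (as already used in the proof of Theorem \ref{thm:maineqmixedtorsion}), which yields preservation of ${}^pD^{\leq 0}\cap D^{\ULA}$, ${}^pD^{\geq 0}\cap D^{\ULA}$ and $\Perv^{\ULA}$; the case of $\AHck$ is handled verbatim, with $\Gr$ replaced by $\Fl$, using $\Fl_{G,S'}=\Fl_{G,S}\times_S S'$ and the $\AHck$-analogue of the fibrewise criterion.

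The argument is essentially formal once the fibrewise description of \emph{both} halves of the relative perverse $t$-structure is granted, so the bulk of the work is bookkeeping: after passing to finitely presented models one must check that supports, Schubert strata and the ambient (ind-)schemes are compatible with the base change $S'\to S$, and that the fibrewise criterion is genuinely available for $\AHck$ in the form used. The point to be most careful about is the ${}^pD^{\geq 0}$ case; if one prefers not to appeal to the fibrewise description of ${}^pD^{\geq 0}$ directly, I would instead obtain it by duality on ULA sheaves, using that $\bb{D}_{S'}f^*A\cong f^*\bb{D}_SA$ for $A\in D^{\ULA}$ by Corollary \ref{cor:ULAstabcompati} and that relative Verdier duality exchanges ${}^pD^{\leq 0}$ and ${}^pD^{\geq 0}$ for the (self-dual) relative perverse $t$-structure, thereby reducing the ${}^pD^{\geq 0}$ statement to the ${}^pD^{\leq 0}$ one.
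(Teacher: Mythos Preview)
Your argument is correct and follows the same route as the paper, which simply cites \cite[Theorem~6.1(ii)]{HS}; you have just unpacked that citation, spelling out the fibrewise identification $\Gr_{G,S'}\times_{S'}\bar s'\cong\Gr_{G,S}\times_S\bar s$ and the bookkeeping with supports and Schubert strata. The alternative duality argument you sketch for ${}^pD^{\geq 0}$ is unnecessary here, since \cite[Theorem~6.1]{HS} already gives base-change compatibility for both halves of the $t$-structure.
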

\begin{proof}
This follows from \cite[Theorem 6.1(ii)]{HS} noting that $f^*$ commutes with tensor products.
\end{proof}
\begin{prop}(cf.\cite[Proposition VI.7.7]{FS})\label{prop:flatpervchar}
Assume $G$ is split and $\Lambda$ is a torsion ring.
Let $S\to \Divtil$ be morphisms of perfect schemes.
An object $A\in D(\Hck_{G,S},\Lambda)^{\rm{bd}}$ is flat perverse if and only if 
\[
R\pi_{T*}\rm{CT}_B(A)[\deg]\in D(S,\Lambda)
\]
is \'{e}tale locally on $S$ isomorphic to a finite projective $\Lambda$-module in degree 0.
Here the functor $\rm{CT}_B(-)[\deg]$ is the (shifted) hyperbolic localization functor defined just as in \cite[Corollary VI.3.5]{FS}, and $\pi_T\colon \Hck_{T,S}\to S$ is the projection.
\end{prop}
\begin{proof}
The proof is the same as \cite[Proposition VI.7.7]{FS}.
\end{proof}
Now we can show the following theorem:
\begin{thm}\label{thm:equivPerv}
The equivalence in Theorem \ref{thm:maineqmixedQlb} induces the equivalence
\begin{align}\label{eqn:pervHckequiv}
\Sat(\Hck_{G,\Spec k}^{\rm{eq}},\Lambda)\simeq \Sat(\Hck_{G,\Spec k}^{\rm{Witt}},\Lambda).
\end{align}
\end{thm}
\begin{proof}
By the standard argument, this reduces to the case where $\Lambda$ is a torsion ring.
By Proposition \ref{prop:pervpreserve} and the proof of Theorem \ref{thm:maineqmixedtorsion}, there exist fully faithful functors
\begin{align*}
\Sat(\Hck_{G,\Spec \ol{\cal{O}}_{(\pi,t)}},\Lambda)&\overset{f^*}{\hookrightarrow} \Sat(\Hck^{\rm{eq}}_{G,\Spec k},\Lambda),\\
\Sat(\Hck^{\rm{eq}}_{G,\Spec \ol{\cal{O}}_{(\pi,t)}},\Lambda)&\hookrightarrow \Sat(\Hck_{G,\Spec \ol{K}},\Lambda),\\
\Sat(\Hck_{G,\Spec \ol{\cal{O}}_{(t,\pi)}},\Lambda)&\hookrightarrow \Sat(\Hck^{\rm{Witt}}_{G,\Spec k},\Lambda),\\
\Sat(\Hck^{\rm{eq}}_{G,\Spec \ol{\cal{O}}_{(t,\pi)}},\Lambda)&\hookrightarrow \Sat(\Hck_{G,\Spec \ol{K}},\Lambda).
\end{align*}
We claim that these are equivalences.
The four proofs are the same, so we only deal with the first functor $f^*$.
For $A\in \Sat(\Hck^{\rm{eq}}_{G,\Spec k},\Lambda)$, there exists a $A'\in D^{\ULA}(\Hck_{G,\Spec \ol{\cal{O}}_{(\pi,t)}},\Lambda)$ unique up to isomorphism such that $f^*A'\cong A$.
The object $R\pi_{T*} \rm{CT}_B(A)[\deg]\in D(\Spec k,\Lambda)$ is isomorphic to a finite projective $\Lambda$-module in degree 0 by Proposition \ref{prop:flatpervchar}.
Since $R\pi_{T*} \rm{CT}_B(-)[\deg]$ preserves ULA sheaves, $R\pi_{T*} \rm{CT}_B(A')[\deg]$ is locally constant with perfect fibers.
As the hyperbolic localization commutes with pullbacks, it follows that $R\pi_{T*} \rm{CT}_B(A')[\deg]$ is isomorphic to a finite projective $\Lambda$-module in degree 0.
Thus $A'$ is flat perverse.
This implies that the functor $f^*$ is essentially surjective, and thus equivalence.
\end{proof}
\section{Affine Grassmannian over v-sheaf $\Div^{1,1}$ and symmetric monoidal structure} \label{scn:symm}
In this section, we prove the following theorem:
\begin{thm}\label{thm:symmonoidal}
The equivalence (\ref{eqn:pervHckequiv}) is symmetric monoidal.
\end{thm}
Here the symmetric monoidal structure on both sides of (\ref{eqn:pervHckequiv}) is defined as follows:

If $\Lambda$ is an algebraic extension $L$ of $\QQ_{\ell}$ (resp. its ring of integers $\cal{O}_L$), the symmetric monoidal structure is induced from that in the cases where $\Lambda$ is a finite extension $L'$ of $\QQ_{\ell}$ in $L$ (resp. its ring of integers $\cal{O}_{L'}$) by passing to the limit.
If $\Lambda$ is a finite extension $L$ of $\QQ_{\ell}$, then the symmetric monoidal structure is induced from that in the case where $\Lambda=\cal{O}_L$.
Moreover, if $\Lambda=\cal{O}_L$, the symmetric monoidal structure is induced from that in the cases where $\Lambda$ is a quotient ring of $\cal{O}_L$ by passing to the limit.
Thus it suffices to give the symmetric monoidal structure in the case where $\Lambda$ is a torsion ring.

As in \cite[Chapter VI]{FS}, there exists a symmetric monoidal structure on both sides of (\ref{eqn:pervHckequiv}).
More precisely, there exists a symmetric monoidal structure on 
\[
\Sat(\Hck_{G,\Div^1},\Lambda)
\]
coming from the fusion product as in \cite[\S VI.9]{FS}, where $\Div^1$ is a v-sheaf defined in \cite[Definition VI.1.1]{FS} (a moduli space of closed Cartier divisors on a Fargues--Fontaine curve).
Also, there exists a symmetric monoidal structure on 
\[
\Sat(\Hck_{G,\Spd C},\Lambda)
\]
where $C$ is a completion $\wh{\ol{E}}$ of an algebraic closure of $E$ such that the pullback functor along $\Spd C\to \Div^1$
\[
\Sat(\Hck_{G,\Div^1},\Lambda)\to \Sat(\Hck_{G,\Spd C},\Lambda),
\]
which is essentially surjective, is symmetric monoidal.
From the equivalence 
\[
\Sat(\Hck_{G,\Spd C},\Lambda)\simeq \Sat(\Hck^{\Witt}_{G,\Spd k},\Lambda)
\]
induced from the equivalence in \cite[Corollary VI.6.7]{FS}, we have a symmetric monoidal structure on 
\[
\Sat(\Hck^{\Witt}_{G,\Spd k},\Lambda)\simeq \Sat(\Hck_{G,\Spec k}^{\rm{Witt}},\Lambda).
\]
The same is true in the equal characteristic case.

We introduce a v-sheaf $\Div^{1,d}$ on the category of perfectoid spaces over $k$, which is a moduli space of sequences of divisors on a ``Fargues--Fontaine surface''.

We use the notions around adic spaces and diamonds.
Let $\Perf_k$ be the category of perfectoid spaces over $k$.

For $S=\Spa(R,R^+)\in \Perf_k$, put
\[
\underline{\cal{Y}}_S:=(\Spa W_{\cal{O}_E}(R^+)[[t]])\setminus V([\varpi]), 
\]
where $\varpi$ is a pseudo uniformizer of $R$.
We define a v-sheaf $\Div^{1,d}_{\underline{\cal{Y}}}\colon \Perf_k^{\rm{op}}\to \rm{Sets}$ by
\begin{align*}
&\Div^{1,d}_{\underline{\cal{Y}}}(S)\\
&=\left\{(D_0,D_1,\ldots,D_d)\relmiddle|\begin{array}{l}
\text{$D_0$ is a closed Cartier divisor of $\underline{\cal{Y}}_S$.}\\
\text{$D_1,\ldots, D_d$ are closed Cartier divisors of (a space defined by) $D_0$.}\\
\text{Each $D_i$ ($1\leq i\leq d$) is isomorphic to some $\bb{Z}_p[[t]]$-untilt $S^{\sharp}$ of $S$,}\\
\text{ and the induced map $S \cong D_i^{\dia} \hookrightarrow \underline{\cal{Y}}_S^{\dia}$ is a map over $S$.}
\end{array} \right\}.
\end{align*}
for $S=\Spa(R,R^+)\in \Perf_k$.
Define a v-sheaf $\ol{\Div}^{1,d}_{\underline{\cal{Y}}}$ by
\[
\ol{\Div}^{1,d}_{\underline{\cal{Y}}}=\Div^{1,d}_{\underline{\cal{Y}}}/\Sigma_d
\]
 where $\Sigma_d$ is the symmetric group of degree $d$ acting as permutations of $D_1,\ldots,D_d$.
Let $\cal{O}_{D_0}$ be a structure sheaf of the closed subvariety $D_0$ of $\underline{\cal{Y}}_S$.
For $(D_0,\ldots,D_d)\in \Div^{1,d}_{\underline{\cal{Y}}}(S)$, write 
\[
B^+_{D_0,D_1,\ldots,D_d}(S)
\]
for the completion of $\cal{O}_{D_0}$ along the product $\cal{I}_{D_1}\ldots \cal{I}_{D_d}$ of the ideal sheaves corresponding to $D_1,\ldots,D_d$.
Put
\[
B_{D_0,D_1,\ldots,D_d}(S)=B^+_{D_0,\ldots,D_d}(S)\left[\frac{1}{\cal{I}_{D_1}\ldots \cal{I}_{D_d}}\right]
\]
and define the presheaves $L^+_{\Div^{1,d}}G$, $L_{\Div^{1,d}}G$ by 
\begin{align*}
L^+_{\Div^{1,d}_{\underline{\cal{Y}}}}G(S)&=G(B^+_{D_0,\ldots,D_d}(S)),\\
L_{\Div^{1,d}_{\underline{\cal{Y}}}}G(S)&=G(B_{D_0,\ldots,D_d}(S))
\end{align*}
for $S=\Spa(R,R^+)\in \Perf_k$.
It is obvious that $L^+_{\Div^{1,d}_{\underline{\cal{Y}}}}G$ and $L_{\Div^{1,d}_{\underline{\cal{Y}}}}G$ descend to the presheaves over $\ol{\Div}^{1,d}_{\underline{\cal{Y}}}$, and they are denoted by $L^+_{\ol{\Div}^{1,d}_{\underline{\cal{Y}}}}G$ and $L_{\ol{\Div}^{1,d}_{\underline{\cal{Y}}}}G$, respectively.
We also define the affine Grassmannian and local Hecke stack over $\ol{\Div}^{1,d}_{\underline{\cal{Y}}}$ by 
\begin{align*}
\Gr_{G,\ol{\Div}^{1,d}_{\underline{\cal{Y}}}}&:=[L_{\ol{\Div}^{1,d}_{\underline{\cal{Y}}}}G/L^+_{\ol{\Div}^{1,d}_{\underline{\cal{Y}}}}G],\\
\Hck_{G,\ol{\Div}^{1,d}_{\underline{\cal{Y}}}}&:=[L^+_{\ol{\Div}^{1,d}_{\underline{\cal{Y}}}}G\backslash L_{\ol{\Div}^{1,d}_{\underline{\cal{Y}}}}G/L^+_{\ol{\Div}^{1,d}_{\underline{\cal{Y}}}}G].
\end{align*}
For a small v-stack $S$ with a map $S\to \ol{\Div}^{1,d}_{\underline{\cal{Y}}}$, 
\begin{align*}
\Gr_{G,S}:=\Gr_{G,\ol{\Div}^{1,d}_{\underline{\cal{Y}}}}\times_{\ol{\Div}^{1,d}_{\underline{\cal{Y}}}}S,
\Hck_{G,S}:=\Hck_{G,\ol{\Div}^{1,d}_{\underline{\cal{Y}}}}\times_{\ol{\Div}^{1,d}_{\underline{\cal{Y}}}}S.
\end{align*}
By the same argument as \cite[Proposition VI.1.7, Proposition VI.1.9]{FS}, $\Gr_{G,S}$ is a small v-sheaf and $\Hck_{G,S}$ is a small v-stack.

For a perfect scheme $X$ of characteristic $p$, we define a small v-sheaf $X^{\diamondsuit}$ as in \cite[\S 18.3]{SW} (not as in \cite[\S 27]{Sch}).
That is, put $(\Spec R)^{\diamondsuit}=\Spd(R,R)$ and define $X^{\diamondsuit}$ for general $X$ by gluing.
There is a canonical functor of sites
\[
\alpha_X\colon X^{\diamondsuit}_{v}\to X_{\et}.
\]
The pullback along this defines a functor
\begin{align}\label{eqn:pbdiamond}
\alpha_X^*\colon D(X_{\et},\Lambda)\to D_{\et}(X^{\diamondsuit},\Lambda).
\end{align}

For a perfect $k$-algebra $A$ and a map $\Spec A\to \Divtil$ corresponding to $(\xi_1,\xi_2)\in W_{\cal{O}_E}(A)[[t]]$ and a map $\Spa(R,R^+)\to \Spa(A,A)$ from a perfectoid space $\Spa(R,R^+)\in\Perf$, we can define a map $\Spa(R,R^+) \to \Div^{1,1}$ corresponding to $(D_0,D_1)$, where $D_0$ is a divisor generated by an image of $\xi_1$ under
\[
W_{\cal{O}_E}(A)[[t]]\to W_{\cal{O}_E}(R^+)[[t]]\left[\frac{1}{\varpi}\right]
\]
and $D_1$ is a divisor generated by an image of $\xi_2$ under the same map (composed with a quotient map by $\cal{I}_{D_0}$).
From this, we get a map
\[
(\Divtil)^{\diamondsuit}\to \Div^{1,1}_{\underline{\cal{Y}}}.
\]
From the map $\Spec k\to \Divtil$ corresponding to $(t,\pi)\in \Divtil(k)$, we obtain the composition
\[
(t,\pi)\colon \Spd k\to (\Divtil)^{\diamondsuit}\to \Div^{1,1}_{\underline{\cal{Y}}}.
\]
Let $\ol{K}$ be an algebraic closure of $K(\Divtil)$ and let $\ol{\cal{O}}_{(t,\pi)}$ be a localization of the integral closure of $\cal{O}(\Divtil)$ in $\ol{K}$ at a prime lying over the prime corresponding to $(t,\pi)\in \Divtil(k)$.
We similarly obtain the maps
\begin{align*}
\Spd \ol{\cal{O}}_{(t,\pi)}&\to (\Divtil)^{\diamondsuit}\to \Div^{1,1}_{\underline{\cal{Y}}},\\
\Spd \ol{K}&\to (\Divtil)^{\diamondsuit}\to \Div^{1,1}_{\underline{\cal{Y}}}.
\end{align*}
Here we endow $k$, $\ol{\cal{O}}_{(t,\pi)}$ and $\ol{K}$ with discrete topologies.
\begin{proof}[Proof of Theorem \ref{thm:symmonoidal}]
By the standard argument, this reduces to the case where $\Lambda$ is a torsion ring.

It suffices to show that the isomorphism
\[
\xymatrix{
\Sat(\Hck^{\Witt}_{G,\Spec k})\ar@{<-}[r]^-{i^*}_-{\sim}&\Sat(\Hck_{G, \Spec \ol{\cal{O}}_{(t,\pi)}})\ar[r]^-{j^*}_-{\sim}&\Sat(\Hck_{G,\Spec \ol{K}})
}
\]
in Theorem \ref{thm:equivPerv} is symmetric monoidal with respect to the above symmetric monoidal structure on $\Sat(\Hck^{\Witt}_{G,\Spec k})$ and $\Sat(\Hck_{G,\Spec \ol{K}})$.
We have the following diagram:
\begin{align}\label{eqn:diagalpha}
\begin{gathered}
\xymatrix{
\Sat(\Hck^{\Witt}_{G,\Spec k})\ar@{<-}[r]^-{i^*}_-{\sim}\ar[d]_{\alpha^*}&\Sat(\Hck_{G,\Spec \ol{\cal{O}}_{(t,\pi)}})\ar[r]^-{j^*}_-{\sim}\ar[d]_{\alpha^*}&\Sat(\Hck_{G,\Spec \ol{K}})\ar[d]_{\alpha^*}\\
\Sat(\Hck^{\Witt}_{G,\Spd k})\ar@{<-}[r]^-{(i^{\diamondsuit})^*}&\Sat(\Hck_{G, \Spd \ol{\cal{O}}_{(t,\pi)}})\ar[r]^-{(j^{\diamondsuit})^*}&\Sat(\Hck_{G,\Spd \ol{K}}).
}
\end{gathered}
\end{align}
This diagram is commutative by definition (cf. \cite[Proposition 27.1]{Sch}).
By the argument for $\ol{K}$ instead of $k$, there exists a symmetric monoidal structure on $\Sat(\Hck_{G,\Spec \ol{K}})$.
The left vertical functor is an equivalence, and replacing $k$ by $\ol{K}$, the right vertical functor is also an equivalence.
Furthermore, we can show that the bottom two functors are equivalences, by the same argument as the equivalence of $i^* $ and $j^*$.
Here we use the equivalence
\[
D^{\ULA/\Spd \ol{\cal{O}}_{(t,\pi)}}(\Spd \ol{\cal{O}}_{(t,\pi)},\Lambda)\simeq D_{\perf}(\Lambda), 
\]
where the right-hand side is the category of perfect $\Lambda$-complexes.

Moreover, define the following rings:
\begin{align*}
C&:=\wh{\ol{E}} \text{(: the completion of an algebraic closure of $W_{\cal{O}_E}(k)[1/p]$.)}\\
\cal{O}_C&\text{: the completion of the integral closure of $W_{\cal{O}_E}(k)$ in $C$.}\\
CC&\text{: the completion of an algebraic closure of $W_{\cal{O}_E}(\ol{K})[1/p]$.}\\
\cal{O}C&\text{: the completion of the integral closure of $W_{\cal{O}_E}(\ol{K})$ in $CC$.}\\
C\cal{O}&\text{: the completion of the integral closure of $W_{\cal{O}_E}(\ol{\cal{O}}_{(t,\pi)})[1/p]$ in $CC$.}\\
\cal{OO}&\text{: the completion of the integral closure of $W_{\cal{O}_E}(\ol{\cal{O}}_{(t,\pi)})$ in $CC$.}
\end{align*}
Then we have the following diagram:
\begin{align}\label{eqn:diagexc}
\begin{gathered}
\xymatrix{
\Sat(\Hck^{\Witt}_{G,\Spd k})\ar@{<-}[r]\ar@{<-}[d]&\Sat(\Hck_{G,\Spd \ol{\cal{O}}_{(t,\pi)}})\ar[r]\ar@{<-}[d]&\Sat(\Hck_{G,\Spd \ol{K}})\ar@{<-}[d]\\
\Sat(\Hck_{G,\Spd \cal{O}_C})\ar@{<-}[r]&\Sat(\Hck_{G,\Spd \cal{OO}})\ar[r]&\Sat(\Hck_{G,\Spd \cal{O}C}).
}
\end{gathered}
\end{align}
The left vertical functor is an equivalence by \cite[Corollary VI.6.7]{FS} and the right vertical functor is also an equivalence replacing $k$ by $\ol{K}$.
Also, the middle vertical functor is an equivalence by the same argument as \cite[Corollary VI.6.7]{FS}, using the equivalence
\[
D^{\ULA/\Spd \cal{OO}}(\Spd \cal{OO},\Lambda)\simeq D_{\perf}(\Lambda).
\]
Furthermore, we have the following diagram:
\begin{align}\label{eqn:diagpb}
\begin{gathered}
\xymatrix{
\Sat(\Hck_{G,\Spd \cal{O}_C})\ar@{<-}[r]\ar@{<-}[d]&\Sat(\Hck_{G,\Spd \cal{O}\cal{O}})\ar[r]\ar@{<-}[d]&\Sat(\Hck_{G,\Spd \cal{O}C})\ar@{<-}[d]\\
\Sat(\Hck_{G,\Spd W_{\cal{O}_E}(k)})\ar@{<-}[r]&\Sat(\Hck_{G,\Spd W_{\cal{O}_E}(\ol{\cal{O}}_{(t,\pi)})})\ar[r]&\Sat(\Hck_{G,\Spd W_{\cal{O}_E}(\ol{K})}),
}
\end{gathered}
\end{align}
where the vertical maps are the pullback functors.
We can show that all the functors in (\ref{eqn:diagpb}) are equivalences by the same argument as above.
Here we use the equivalence
\[
D^{\ULA/\Spd W_{\cal{O}_E}(\ol{\cal{O}}_{(t,\pi)})}(\Spd W_{\cal{O}_E}(\ol{\cal{O}}_{(t,\pi)}),\Lambda)\simeq D_{\perf}(\Lambda)
\]
and similar equivalences for the other bases appearing in (\ref{eqn:diagpb}).

From the diagram (\ref{eqn:diagalpha}), (\ref{eqn:diagexc}) and (\ref{eqn:diagpb}), it suffices to show that there exists a symmetric monoidal structure on 
\[
\Sat(\Hck_{G,\Spd W_{\cal{O}_E}(\ol{\cal{O}}_{(t,\pi)})})
\]
such that the pullback functors to $\Sat(\Hck_{G,\Spd W_{\cal{O}_E}(k)})$ and $\Sat(\Hck_{G,\Spd W_{\cal{O}_E}(\ol{K})})$ are symmetric monoidal.
A symmetric monoidal structure on $\Sat(\Hck_{G,\Spd W_{\cal{O}_E}(\ol{\cal{O}}_{(t,\pi)})})$ can be defined via fusion product by the same argument as \cite[\S VI.9]{FS}.
Here we use the fact that the map
\[
\Spd W_{\cal{O}_E}(\ol{\cal{O}}_{(t,\pi)})\to \Spd \ol{\cal{O}}_{(t,\pi)}
\]
is $\ell$-cohomologically smooth since there is an isomorphism
\[
\Spd W_{\cal{O}_E}(\ol{\cal{O}}_{(t,\pi)})\cong \Spd W_{\cal{O}_E}(k)\times \Spd \ol{\cal{O}}_{(t,\pi)}
\]
and the map $\Spd W_{\cal{O}_E}(k)\to \Spd k$ is $\ell$-cohomologically smooth by \cite[Corollary IV.2.34]{FS}.
Then the two pullback functors
\begin{align*}
\Sat(\Hck_{G,\Spd W_{\cal{O}_E}(\ol{\cal{O}}_{(t,\pi)})})&\to \Sat(\Hck_{G,\Spd W_{\cal{O}_E}(k)}), \text{and}\\
\Sat(\Hck_{G,\Spd W_{\cal{O}_E}(\ol{\cal{O}}_{(t,\pi)})})&\to \Sat(\Hck_{G,\Spd W_{\cal{O}_E}(\ol{K})})
\end{align*}
are symmetric monoidal.
In fact, by the definition of fusion product, this reduces to the compatibility of exterior tensor products with pullbacks, which is obvious.
\end{proof}
\bibliographystyle{test}
\bibliography{DerivedSatakeBib}
\end{document}